\documentclass[11pt,oneside, a4paper]{amsart}

\usepackage[english]{babel}
\usepackage[T1]{fontenc}
\usepackage{amsmath}
\usepackage{amssymb}
\usepackage{float}
\usepackage{amsfonts}
\usepackage{mathtools}
\usepackage[utf8]{inputenc}
\usepackage[mathcal]{eucal}
\usepackage{enumerate}
\usepackage{amsthm}
\usepackage{hyperref}
\usepackage{bm,upgreek}
\usepackage[totalwidth=14cm,totalheight=20cm]{geometry}
\usepackage{epsfig,fancyhdr,color}


\newtheorem{cor}{Corollary}[section]

\newtheorem{teo}[cor]{Theorem}

\newtheorem{prop}[cor]{Proposition}

\newtheorem{lemma}[cor]{Lemma}

\theoremstyle{definition}
\newtheorem{defi}[cor]{Definition}
\theoremstyle{remark}
\newtheorem{remark}[cor]{Remark}
\newtheorem*{remark*}{Remark}

\newcommand{\R}{\mathbb{R}}

\newcommand{\Z}{\mathbb{Z}}

\newcommand{\h}{\mathbb{H}}

\newcommand{\Ima}{\mathrm{Im}}
\newcommand{\Sec}{\mathrm{Sec}}

\newcommand{\trace}{\mathrm{tr}}
\newcommand{\Tp}{\mathpzc{T}_{\bm{\theta}}(\Sigma_{\mathpzc{p}})}
\newcommand{\Hopf}{\mathrm{Hopf}}
\newcommand{\dTp}{\Tp \times \Tp}
\newcommand{\II}{I\hspace{-0.1cm}I}
\newcommand{\III}{I\hspace{-0.1cm}I\hspace{-0.1cm}I}

\DeclareMathAlphabet{\mathpzc}{OT1}{pzc}{m}{it}
\DeclareMathOperator{\arctanh}{arctanh}

\title[CMC foliation with particles]{Constant mean curvature foliation of globally hyperbolic (2+1)-spacetimes with particles}
\author{Qiyu Chen}
\address{Qiyu Chen:
School of Mathematics, Sun Yat-Sen University,
510275, Guangzhou, P. R. China}
\email{chenqy0121@gmail.com}
\thanks{Q. Chen was partially supported by NSFC, No.11271378.}

\author{Andrea Tamburelli}
\address{Andrea Tamburelli:
University of Luxembourg, Maison du nombre, 6 avenue de la Fonte, L-4364 Esch-sur-Alzette, Luxembourg}
\email{andrea.tamburelli@uni.lu}
\thanks{}
\date{\today}

\begin{document}

\begin{abstract}
Let $M$ be a globally hyperbolic maximal compact
$3$-dimensional spacetime locally modelled on
Minkowski, anti-de Sitter or de Sitter space. It is well known that $M$ admits a unique foliation by constant mean curvature surfaces. In this paper we extend this result to singular spacetimes with particles (cone singularities of angles less than $\pi$ along time-like geodesics).
\end{abstract}

\maketitle

\setcounter{tocdepth}{1}
\tableofcontents

\section*{Introduction}

Globally hyperbolic maximal compact (GHMC) $(2+1)$-dimensional spacetimes with constant curvature have attracted the attention of the mathematical community after the pioneering work of Mess (\cite{Mess}), who pointed out many connections with Teichm\"uller theory. Although the local geometry of these manifolds is trivial, being locally isometric to Minkowski, anti-de Sitter or de Sitter space, the global geometry is more complicated but well-understood at least in the classical, non-singular context. Namely, after the works of Barbot, Béguin and Zeghib (\cite{barbotzeghib}, \cite{BBZFeuilletage}), we have a satisfactory description of their geometry in terms of foliations by constant mean curvature surfaces (called $H$-surfaces henceforth) and constant Gauss curvature surfaces (in brief $K$-surfaces). \\
\indent The aim of this paper is to provide such a description also for 
convex globally hyperbolic maximal (CGHM) $(2+1)$-dimensional spacetimes with particles. These are Lorentzian manifolds locally modelled on Minkowski, anti-de Sitter or de Sitter space, whose metrics present 
cone singularities of angle $\theta_{i} \in (0,\pi)$ along a finite number of time-like geodesics $l_i$. They turn out to be interesting objects to study both from a mathematical and from a physical point of view, being them connected with the Teichm\"uller theory of surfaces with marked points \cite{Schlenker-Krasnov} and modelling the presence of 
point particles in 3d gravity (\cite{thooft1,thooft2}).  \\
\indent This program has already been started by the first author and Jean-Marc Schlenker in \cite{QiyuAdS} and \cite{QiyudS}, where the existence and uniqueness of the foliation by $K$-surfaces is proved for anti-de Sitter and de-Sitter manifolds with particles. In this paper we address the question about the constant mean curvature foliation and we obtain the following result:

\begin{teo}\label{thm:CMCfoliationintro} Let $M$ be a CGHM Minkowski, anti-de Sitter or de-Sitter manifold with particles. Then $M$ admits a unique foliation by constant mean curvature surfaces orthogonal to the singular lines.
\end{teo}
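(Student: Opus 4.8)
The plan is to construct, for each admissible value $H$ of the mean curvature, a spacelike Cauchy surface $\Sigma_H$ of constant mean curvature $H$ meeting every singular line orthogonally, and then to show that the family $\{\Sigma_H\}$ is the desired foliation. The overall strategy follows \cite{BBZFeuilletage} in the non-singular case, but the treatment of the particles would adapt the techniques developed for the $K$-surface foliation in \cite{QiyuAdS} and \cite{QiyudS}. Throughout I would work on the complement of the singular lines, where the metric is smooth and standard elliptic theory applies, and handle the singular locus by a separate local analysis. The three ambient geometries are treated in parallel, the differences being confined to the admissible range of $H$ and to the precise reference surfaces used as barriers.

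First I would establish existence of a single CMC surface for each admissible $H$ by the barrier method. Natural barriers are furnished by the level sets of the cosmological time function (or, in the anti-de Sitter model, by the boundary of the convex core together with umbilical reference surfaces): these foliate $M$, are uniformly convex, and their mean curvatures sweep monotonically across the full admissible range between the initial singularity and the asymptotic boundary, so a level set on which the mean curvature is everywhere greater (resp. smaller) than $H$ serves as an upper (resp. lower) barrier. Between two such barriers the problem reduces to solving the prescribed mean curvature equation, a quasilinear elliptic equation for the graph of the surface over a fixed Cauchy surface, for which the barriers, together with a priori $C^{0}$ and gradient estimates, yield the compactness needed to produce a smooth solution. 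The genuinely new feature is the cone singularity: near each particle the ambient metric is rotationally symmetric about the singular line $l_i$, so by the uniqueness of the solution it inherits this symmetry, which forces it to meet $l_i$ orthogonally and to carry a cone point of angle $\theta_i$ in its induced metric.

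Next I would prove that these surfaces foliate $M$. The key point is monotonicity: if $H_1 < H_2$ then $\Sigma_{H_1}$ and $\Sigma_{H_2}$ are disjoint and correctly ordered in time, which follows from the maximum principle applied at a hypothetical first contact point. In the smooth locus this is the strong maximum principle for spacelike CMC hypersurfaces; along a singular line one uses the rotational symmetry to write both surfaces as graphs over the orthogonal plane, so that the comparison persists and no tangency can occur on the singular locus. The surfaces depend smoothly on $H$ by the implicit function theorem, the linearized (Jacobi-type) operator being invertible because the relevant combination of the squared second fundamental form and the ambient curvature has a definite sign. Since the mean curvatures of the $\Sigma_H$ approach the extreme values of the admissible range exactly as one approaches the past and future boundaries of $M$ — matching the behaviour of the barriers — the family exhausts $M$ and constitutes a foliation.

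Uniqueness of the foliation follows from the same comparison: any CMC surface of mean curvature $H$ orthogonal to the singular lines must coincide with $\Sigma_H$, for otherwise a first-contact point would violate the strong maximum principle in the smooth locus, while the orthogonality constraint and the symmetric local model rule out contact along the singular lines. I expect the main obstacle to be precisely this analysis near the particles: one must verify that the chosen barriers, the a priori estimates, and the maximum principle all remain valid in the presence of the cone singularities, and in particular establish a sufficiently precise description of the local model of a CMC surface near a cone point. This amounts to a removable-singularity argument upgrading the rotationally symmetric solution on the smooth locus to a surface that is genuinely transverse to $l_i$ with the prescribed cone angle $\theta_i$, and it is here that the singular setting departs most sharply from the classical one.
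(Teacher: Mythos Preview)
Your plan is the classical barrier route of \cite{BBZFeuilletage}, adapted to cones. The paper takes a genuinely different path and, in fact, explicitly avoids barriers altogether. In the anti-de Sitter and de Sitter cases the paper starts from the already established $K$-surface foliation of \cite{QiyuAdS,QiyudS} and exploits a new elementary observation: pushing a $K$-surface a specific explicit distance along the normal flow produces a surface of constant mean curvature $H=f(K)$, with $f$ an explicit bijection onto the admissible range of $H$. So existence of each $S_H$, the foliation property, and the correct asymptotics all come for free from the $K$-foliation. In the flat case the paper instead builds, for each $H<0$, a homeomorphism $\Phi_H:T^*\Tp\to\mathcal{GH}^0_{\bm\theta}(\Sigma_{\mathpzc p})$ by solving a scalar elliptic equation for the conformal factor of the induced metric; this yields a unique $H$-surface in every spacetime, and the same equidistant trick then gives the $K$-foliation and, from it, the full $H$-foliation. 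What the paper buys with this approach is that the cone singularities are handled once and for all at the level of the $K$-foliation (or the scalar PDE on the surface), rather than having to redo the full barrier/compactness/regularity package for the mean curvature equation in a conical ambient space.

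Your proposal also contains a specific gap near the particles. You argue that the CMC solution ``inherits rotational symmetry'' near a singular line ``by the uniqueness of the solution'', and that this forces orthogonality; but at that stage of the argument you have not yet proved uniqueness, so this is circular. Relatedly, for the maximum principle at a singular point you appeal to rotational symmetry of the competing surfaces, which is not available: a CMC surface orthogonal to a particle need not be locally rotationally symmetric. The correct replacement, which the paper proves and uses, is that any $H$-surface orthogonal to the singular locus is \emph{umbilical} at each particle (the traceless part of the shape operator is $L^2$-integrable by Gauss--Bonnet and hence vanishes at the cone points, since the cone angles are below $\pi$). It is this umbilicality that feeds into the version of the maximum principle at singular points from \cite{QiyuAdS}. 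Without it, your comparison argument along the singular lines does not go through.
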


Our proof of this theorem is actually different in the flat and non-flat case. In the anti-de Sitter and de-Sitter setting we exploit the aforementioned results about the existence and uniqueness of the foliation by constant Gauss curvature surfaces orthogonal to the singular lines and deduce Theorem \ref{thm:CMCfoliationintro}, using the fact that every $H$-surface is at constant time-like distance from a $K$-surface, where $H$ can be expressed explicitly as a function of $K$. This is an elementary but new observation and we deduce, in particular, that the existence of a foliation by constant mean curvature surfaces implies the existence of the foliation by constant Gauss curvature surfaces. This is surprising as $H$-surfaces are described by quasi-linear elliptic PDE, whereas the differential equation defining surfaces with constant Gauss curvature is fully non-linear. Moreover, this holds also in the smooth setting, thus obtaining a new proof of the results in \cite{BBZFeuilletage}. \\
\\
\indent On the other hand, in the flat case we prove Theorem \ref{thm:CMCfoliationintro} by introducing a new parametrisation of the deformation space $\mathcal{G}\mathcal{H}^{0}_{\bm{\theta}}(\Sigma_{\mathpzc{p}})$ of CGHM Minkowski manifolds with particles, where $\Sigma_{\mathpzc{p}}$ denotes a closed surface $\Sigma$ of genus $\tau$ with marked points $\mathpzc{p}=(p_{1}, \dots, p_{n})$ such that
\[
\chi(\Sigma,\bm{\theta})=2-2\tau+\sum_{i=1}^{n}\left(\frac{\theta_{i}}{2\pi}-1\right)<0
\]
and $\bm{\theta}=(\theta_{1}, \dots, \theta_{n})\in (0,\pi)^{n}$ represents the cone angles of the cone singularities of 
the CGHM Minkowski  metrics on the manifold $\Sigma_{\mathpzc{p}}\times \R$ along the time-like lines $\{p_{i}\} \times \R$. For every fixed $H\in (-\infty, 0)$, we define a map $\Phi_{H}: T^{*}\Tp \rightarrow \mathcal{G}\mathcal{H}^{0}_{\bm{\theta}}(\Sigma_{\mathpzc{p}})$ which associates to every point $(h,q)\in T^{*}\Tp$, where $h \in \Tp$ is a hyperbolic metric with cone singularities of angle $\theta_i$ at $p_i$, and $q$ is a meromorphic quadratic differential (with respect to the conformal class of $h$) with at most simple poles at the marked points, the isotopy class of the CGHM Minkowski manifold with particles containing an embedded $H$-surface orthogonal to the singular lines with induced metric $I=e^{2u}h$ and second fundamental form 
$\II=\Re(q)+HI$. 
Here the function $u: \Sigma_{p} \rightarrow \R$ is the solution of an elliptic PDE, which translates the fact that the operator $B=I^{-1}\II$ must satisfy the Gauss equation. We then prove the following:

\begin{teo}\label{thm:newparaintro}The map $\Phi_{H}: T^{*}\Tp \rightarrow \mathcal{G}\mathcal{H}^{0}_{\bm{\theta}}(\Sigma_{\mathpzc{p}})$ is a homeomorphism for every $H\in(-\infty,0)$.
\end{teo}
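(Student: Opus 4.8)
The plan is to show that $\Phi_{H}$ is well-defined and continuous, injective, and proper, and then to conclude that it is a homeomorphism by means of a dimension count together with invariance of domain. The analytic heart of the construction is the elliptic equation defining the conformal factor $u$, so I would begin there.

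\emph{Well-definedness.} Given $(h,q)\in T^{*}\Tp$, set $I=e^{2u}h$ and $\II=\Re(q)+HI$. The Codazzi equation for the pair $(I,\II)$ holds automatically: the traceless part of $\II$ equals $\Re(q)$, and since $q$ is holomorphic with respect to the conformal class of $h$ (hence of $I$) and $H$ is constant, the Codazzi operator annihilates $\II$. The only remaining constraint is the Gauss equation for spacelike surfaces in a flat Lorentzian $3$-manifold, $K_{I}=-\det(I^{-1}\II)$, which, writing $K_{h}=-1$ and expressing the pointwise norm of $q$ with respect to $h$, takes the form
\begin{equation*}
\Delta_{h} u = -1 + H^{2} e^{2u} - \tfrac{1}{4}\,\|q\|_{h}^{2}\, e^{-2u}.
\end{equation*}
The right-hand side is strictly increasing in $u$, so a solution, if it exists, is unique, and existence follows by the method of sub- and supersolutions (or from the strict convexity of the associated energy functional). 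The delicate point is the behaviour at the marked points: working in function spaces adapted to the cone singularities of $h$, one must check that $u$ is such that $I=e^{2u}h$ carries a cone singularity of the prescribed angle $\theta_{i}$ at $p_{i}$ and that the resulting surface is orthogonal to the singular line $\{p_{i}\}\times\R$. Once $(I,\II)$ is produced, integrating the Gauss--Codazzi data yields a developing map and a holonomy representation, defining an embedded $H$-surface inside a flat manifold; one then verifies that this manifold is CGHM with particles of angles $\bm{\theta}$, which specifies the point $\Phi_{H}(h,q)\in\mathcal{G}\mathcal{H}^{0}_{\bm{\theta}}(\Sigma_{\mathpzc{p}})$.

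\emph{Injectivity and continuity.} Suppose $\Phi_{H}(h_{1},q_{1})=\Phi_{H}(h_{2},q_{2})$. By the uniqueness of the $H$-surface orthogonal to the singular lines in a fixed CGHM manifold with particles, the two embedded surfaces coincide, so their first and second fundamental forms agree. The conformal class of $I$ together with the prescribed cone angles determines the hyperbolic metric $h$ by uniformization of cone surfaces, while $q$ is recovered as the $(2,0)$-part of $\II-HI$; hence $(h_{1},q_{1})=(h_{2},q_{2})$. Continuity of $\Phi_{H}$ follows from the smooth dependence of the solution $u$ on the data $(h,q)$ (by elliptic regularity and the implicit function theorem, the linearised Gauss equation being invertible thanks to the monotonicity above), together with the continuous dependence of the holonomy on $(I,\II)$.

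\emph{Properness and conclusion.} Both $T^{*}\Tp$ and $\mathcal{G}\mathcal{H}^{0}_{\bm{\theta}}(\Sigma_{\mathpzc{p}})$ are manifolds of the same dimension $12\tau-12+4n$, so once $\Phi_{H}$ is continuous and injective, invariance of domain implies that it is an open map. It then remains to prove that $\Phi_{H}$ is proper: if $\Phi_{H}(h_{k},q_{k})$ converges in $\mathcal{G}\mathcal{H}^{0}_{\bm{\theta}}(\Sigma_{\mathpzc{p}})$, one must extract a subsequence along which $(h_{k},q_{k})$ converges in $T^{*}\Tp$. I expect this to be the main obstacle: it requires a priori estimates guaranteeing that neither the conformal structure $h_{k}$ degenerates in Teichm\"uller space nor the quadratic differential $q_{k}$ blows up, uniformly up to the cone points. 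Granting properness, the image of $\Phi_{H}$ is both open and closed; since the target is connected, $\Phi_{H}$ is surjective, and being a continuous open bijection it is a homeomorphism. Surjectivity here is precisely the statement that every CGHM flat manifold with particles contains an $H$-surface orthogonal to the singular lines, which is the input needed to deduce the constant mean curvature foliation in Theorem \ref{thm:CMCfoliationintro}.
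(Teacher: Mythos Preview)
The overall architecture of your proposal matches the paper's: well-definedness via the Gauss equation PDE, then continuity, injectivity, and properness, concluding by invariance of domain. The well-definedness sketch is essentially correct and parallels the paper (which uses the strictly convex energy functional, as you mention as an alternative). But there are two substantial gaps.

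\medskip

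\textbf{Injectivity.} Your argument assumes what it needs to prove. You write ``by the uniqueness of the $H$-surface orthogonal to the singular lines in a fixed CGHM manifold with particles'', but that uniqueness is not known a priori in the cone-singular setting; it is precisely the content of the injectivity of $\Phi_{H}$. The paper establishes it by a Maximum Principle argument: if there were two distinct $H$-surfaces $S_{1}$ and $S_{2}$, push $S_{1}$ along its future normal flow until it becomes tangent to $S_{2}$ from the future side; by the explicit formulas for principal curvatures under the normal flow the mean curvature has strictly increased, while by the Maximum Principle (together with a version at the singular points, where both surfaces are shown to be umbilical) the tangency forces the opposite inequality. This step is short but not dispensable, and it requires the a priori estimate $2H<\mu\leq H\leq\mu'<0$ on the principal curvatures to even know the normal flow is defined in the relevant direction.

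\medskip

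\textbf{Properness.} You explicitly leave this open, and it is where the paper does the most work; the route is quite different from the direct a priori estimates you anticipate. Rather than estimating $(h_{k},q_{k})$ directly, the paper passes through the equidistant $K$-surface $\Sigma_{f(H)}$ at signed distance $1/(2H)$ from $S_{H}$, with $f(H)=-4H^{2}$. Two observations then drive the argument: the identity maps $(\Sigma_{\mathpzc{p}},I)\to(\Sigma_{\mathpzc{p}},\III)$ and $(\Sigma_{\mathpzc{p}},I)\to(\Sigma_{\mathpzc{p}},g)$ are harmonic with Hopf differentials $\pm Hq$, so that the conformal class of $I$ is the \emph{center} of the minimal Lagrangian map between $g$ and $\III$. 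Consequently, controlling $(h_{k},q_{k})$ reduces to controlling the hyperbolic metrics $g_{k}$ and $\III_{k}$ on the $K$-surfaces. Compactness of the $\III_{k}$ comes for free from the Bonsante--Seppi parameterisation (Theorem \ref{thm:parameterisationMink}); compactness of the $g_{k}$ is obtained by explicitly constructing auxiliary convex surfaces with Gauss curvature in $(-4H^{2}+\epsilon,0)$ from the Bonsante--Seppi data, showing via a comparison lemma that these lie in the future of the $K$-surfaces, and deducing a uniform length bound on closed geodesics. None of this harmonic-map/$K$-surface machinery is visible from a direct PDE approach, and it is also what the paper uses for continuity (your implicit-function-theorem route for continuity is plausible in principle, but making it work uniformly at the cone points would require a careful weighted-space setup that you have not supplied).
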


We thus deduce the existence of a unique $H$-surface $S_{H}$ orthogonal to the singular lines in every CGHM Minkowki manifolds with particles. Moreover, by studying the relation between $K$-surfaces and $H$-surfaces in Minkowski manifolds, we obtain that the family $\{S_{H}\}_{H<0}$ provides a foliation of the whole manifold as a consequence of the following result:

\begin{teo}\label{thm:KfoliationMinkintro} Let $M$ be a CGHM Minkowski manifold with particles. Then it admits a unique foliation by constant Gauss curvature surfaces orthogonal to the singular lines.
\end{teo}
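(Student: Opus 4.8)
The plan is to deduce Theorem~\ref{thm:KfoliationMinkintro} from the new parametrisation in Theorem~\ref{thm:newparaintro} together with the explicit relationship between $K$-surfaces and $H$-surfaces in the flat setting. First I would fix a CGHM Minkowski manifold $M$ with particles, corresponding to a point of $\mathcal{G}\mathcal{H}^{0}_{\bm{\theta}}(\Sigma_{\mathpzc{p}})$. By Theorem~\ref{thm:newparaintro}, for every $H\in(-\infty,0)$ the fibre $\Phi_{H}^{-1}(M)$ is a single point $(h_H,q_H)\in T^{*}\Tp$, giving a unique embedded $H$-surface $S_H$ orthogonal to the singular lines with induced data $I=e^{2u}h_H$ and $\II=\Re(q_H)+H\,I$. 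The idea is to show that each such $S_H$ is, after flowing a constant time-like distance along the Gauss map, a $K$-surface for an appropriate $K=K(H)$, and conversely; so the two foliations are equivalent up to a reparametrisation of the leaves.

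The key analytic input is the pointwise relation between the principal curvatures governing $H$ and $K$. For a space-like surface in a flat Lorentzian $3$-manifold the normal evolution sends a surface with shape operator $B$ to a parallel surface whose shape operator is $B(\mathrm{Id}-tB)^{-1}$; tracking the trace and determinant under this flow, I would derive an explicit algebraic expression giving the Gauss curvature $K$ of the flowed surface as a function of the mean curvature $H$ of $S_H$ and the flow parameter $t$. Concretely, I would verify that by choosing the time-like distance $t=t(H)$ appropriately, the flowed surface has constant Gauss curvature, and that the assignment $H\mapsto K(H)$ is a monotone bijection from $(-\infty,0)$ onto the relevant range of negative curvatures. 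This is the same elementary ``$H$ is a function of $K$'' mechanism invoked in the non-flat cases, now specialised to the degenerate flat geometry where the normal flow is affine rather than trigonometric.

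Having produced, for each $K$ in the admissible range, a unique constant Gauss curvature surface $\Sigma_K$ orthogonal to the singular lines, I would then check that the family $\{\Sigma_K\}$ actually foliates $M$. For this I would show that the leaves are pairwise disjoint and that their union exhausts the manifold: disjointness follows from the monotonicity of the time function associated with the flow together with uniqueness in Theorem~\ref{thm:newparaintro}, while exhaustion requires controlling the behaviour of $S_H$ as $H\to 0^{-}$ and as $H\to -\infty$, where the surfaces should escape to the two ends of the globally hyperbolic manifold (the initial singularity and future infinity). Uniqueness of the $K$-foliation then reduces to the already-established uniqueness of each individual $K$-surface, which transfers back to the uniqueness of $S_H$ via the bijection $H\mapsto K(H)$.

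The main obstacle I anticipate is the asymptotic analysis near the two ends and, relatedly, the behaviour at the particles. One must ensure that the normal flow is well-defined and non-degenerate (i.e.\ $\mathrm{Id}-tB$ stays invertible) uniformly up to the singular lines, so that the flowed surface remains embedded, space-like and orthogonal to the singular lines with the same cone angles $\theta_i$; the cone structure could in principle be distorted by the flow, so I would need to verify that orthogonality to the singular locus is preserved and that the induced cone angles are unchanged. Establishing that the leaves sweep out \emph{all} of $M$ — rather than leaving gaps near the singularity or failing to reach future infinity — is the delicate point, and I expect it to require the uniform geometric estimates on $S_H$ near the marked points that the flat analogue of the $K$-surface theory provides.
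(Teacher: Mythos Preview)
Your overall architecture matches the paper: existence of $\Sigma_K$ comes from flowing the unique $S_H$ (with $H=-\sqrt{-K}/2$) by the signed distance $1/(2H)$, which is exactly Lemma~\ref{lm:existenceKsurface}. Your uniqueness route---flow an arbitrary $\Sigma_K$ forward to produce an $S_H$ and invoke injectivity of $\Phi_H$---is a legitimate alternative to the paper's direct maximum-principle argument on $K$-surfaces; for it to be airtight you should record that the forward flow from a future-convex $\Sigma_K$ is globally defined and lands on a CMC surface (the same computation as Lemma~\ref{keylemmaAdS}, specialised to the flat case). Your worry about non-degeneracy of $E-tB$ and preservation of orthogonality at the particles is already handled in the paper by the a priori bound of Lemma~\ref{lm:est_princ_curv}, which confines the principal curvatures of $S_H$ to $(2H,0)$.

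The genuine gaps are in the foliation step, and here the paper proceeds in the \emph{opposite} logical order from what you propose: it establishes the $K$-foliation directly and only afterwards deduces the $H$-foliation from it. Three concrete ingredients are missing from your outline. First, continuity of the family: one must show that $K_n\to K$ forces $\Sigma_{K_n}\to\Sigma_K$. The paper obtains this via a compactness argument for monotone sequences of convex space-like surfaces, using results of \cite{barbotzeghib} and \cite{schlenkersurfconv} to extract a limit surface and rule out pleating or light-like degeneration; nothing in your proposal addresses this, and it does not fall out of Theorem~\ref{thm:newparaintro}. Second, exhaustion: you correctly flag this as ``the delicate point'' but offer no mechanism. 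The paper's device is specific and short: Gauss--Bonnet gives $\mathrm{Area}(\Sigma_{K})=2\pi\chi(\Sigma,\bm{\theta})/K\to 0$ as $K\to-\infty$, which is incompatible with a convex space-like boundary of the unexhausted region. Third, disjointness: your appeal to ``monotonicity of the time function together with uniqueness'' does not do the job, since distinct $\Sigma_K$ sit at different distances from distinct $S_H$ and you have not yet ordered the $S_H$ themselves; the paper instead uses a direct comparison (Lemma~\ref{lm:mutualposition}) based on the maximum principle for Gauss curvature. Without these three pieces, your sketch stops at ``one $\Sigma_K$ per $K$'' and does not reach a foliation.
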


We also underline the fact that the existence of surfaces with prescribed Gauss or mean curvature is obtained in this paper without using the classical barrier arguments or exploiting the results of the non-singular case, thus our techniques provide also a new proof of the results in \cite{barbotzeghib}.

\subsection*{Outline of the paper} In Section \ref{background} and Section \ref{models} we recall the main definitions and well-known results about hyperbolic metrics with cone singularities on surfaces and $(2+1)$-dimensional spacetimes with particles. Section \ref{sec:Minkowsky} is dedicated to the proof of Theorem \ref{thm:newparaintro} and to the study of constant Gauss curvature and constant mean curvature foliations of CGHM Minkowski manifolds with particles. Theorem \ref{thm:CMCfoliationintro} is proved for the anti-de Sitter manifolds in Section \ref{sec:AdS} and for de Sitter manifolds in Section \ref{sec:dS}.

\section{Teichm\"uller space of hyperbolic metrics with cone singularities}\label{background}
In this section we introduce hyperbolic metrics with cone singularities and their deformation space. The material covered here is classical, the main purpose being fixing the notation and recalling the main results that we are going to use in the sequel.

\subsection{Hyperbolic metrics with cone singularities}
Let $\Sigma$ be a closed oriented surface of genus $\tau$ with $n$ marked points $p_{1},\dots, p_{n}$.
Let $\mathpzc{p}=(p_{1},\dots, p_{n})$. We will denote by $\Sigma_{\mathpzc{p}}$ the surface $\Sigma$ with the marked points $\mathpzc{p}$ removed. Let us fix an $n$-tuple of angles $\bm{\theta}=(\theta_{1}, \dots, \theta_{n}) \in (0,\pi)^{n}$.

 Let $\theta\in(0,2\pi)$. We first recall the local model of a hyperbolic metric with a cone singularity of angle $\theta$. Let $\mathbb{H}^2$ be the Poincar\'{e} model of the hyperbolic plane.  The local model is constructed in the following way: consider two half-lines in $\mathbb{H}^2$ which intersect at the center $O$ and bound a wedge of angle $\theta$ at $O$, and then glue the two half-lines by a rotation 
  fixing $O$. The resulting space is denoted by $\h_{\theta}$, called the \emph{hyperbolic disk with cone singularity of angle $\theta$}, endowed with the metric that in polar coordinates assumes the form:
\[
	g_{\h_{\theta}}=dr^{2}+\sinh^{2}(r)d\phi^{2},   \ \ \  r\in\R^{+}  \ , \ \phi \in \R/\theta\Z \ .
\]

\begin{defi} \label{Def:hyperbolic metrics with cone singularities}
A metric on $\Sigma_{\mathpzc{p}}$ with cone singularities of angles $\bm{\theta}$ at $\mathpzc{p}$
is a smooth metric $g$ with constant sectional curvature $-1$ on $\Sigma_{\mathpzc{p}}$ such that for every $i=1, \dots, n$, there exists a neighborhood $U_i$ in $\Sigma$ of $p_i$, such that the metric $g_{|_{U_i\setminus \{p_{i}\}}}$ is the pull back by a local diffeomorphism $\psi_i$ (which satisfies a regularity condition near $p_i$) of the metric $g_{\h_{\theta_{i}}}$.
\end{defi}

\begin{remark}
 Note that the hyperbolic metrics near cone singularities throughout this paper satisfy a technical assumption. That is, they satisfy a regularity condition such that the local diffeomorphism $\psi_i$ used in Definition \ref{Def:hyperbolic metrics with cone singularities} belongs to a certain type of weighted H\"{o}lder spaces (see \cite[Section 2.2]{GR} for more details). This ensures the existence of harmonic maps from Riemann surfaces with marked points to hyperbolic surfaces with cone singularities at the marked points (see \cite[Theorem 2]{GR}).
\end{remark}

For a hyperbolic metric with cone singularities of angles $\bm{\theta}$ at $\mathpzc{p}$, there exists a conformal coordinate $z$ in a neighborhood $U_{i}$ of $p_{i}$ such that
\[
	g_{|_{U_{i}\setminus \{p_{i}\}}}=e^{2u_{i}(z)}|z|^{2\beta_{i}}|dz|^{2},
\]
where $u_{i}:U_{i}\rightarrow \R$ is a continuous bounded function and 
$\beta_{i}$ denotes the defect
\[
	\beta_{i}=\frac{\theta_{i}}{2\pi}-1 \in \left(-1,-\frac{1}{2}\right) \ .
\]

\noindent We will always assume from now on that $\tau$ and $\beta_{i}$ satisfy
\[
\chi(\Sigma,\bm{\theta})=2-2\tau+\sum_{i=1}^{n}\beta_{i}<0 \ .
\]
This guarantees (\cite{Tro}) that the surface $\Sigma_{\mathpzc{p}}$ admits a hyperbolic metric with cone singularities of angles $\theta_{i}$ at the marked points $p_{i}$.

\begin{remark}The choice of considering singularities of angles $\theta_{i}<\pi$ ensures that at each marked point $p_{i}$, we can always find a neighborhood of fixed radius (depending only on $\theta_i$) isometric to a neighborhood of the singular point in $\h_{\theta_{i}}$, such that it does not contain any other singularities $p_j$ for $j\not=i$ (in other words, the Collar Lemma still holds near cone singularities, see \cite[Theorem 3]{DP} for more details).
\end{remark}

We denote by $\Tp$ the set of hyperbolic metrics on $\Sigma_{\mathpzc{p}}$ with cone singularities of angles $\bm{\theta}=(\theta_{1},\dots, \theta_{n}) \in (0,\pi)^{n}$ at $\mathpzc{p}=(p_{1},\dots p_{n})$, up to diffeomorphisms of $\Sigma$ isotopic to the identity (by an isotopy that fixes each marked point $p_{i}$).

\subsection{Codazzi operators and the tangent space of $\Tp$}
We recall the identification between the tangent space of $\Tp$ 
and the space of meromorphic quadratic differentials on $\Sigma_{\mathpzc{p}}$ with at most simple poles at the marked points. Meromorphic quadratic differentials on $\Sigma_{\mathpzc{p}}$ are then linked to Codazzi operators. These notions will play a fundamental role in Section \ref{sec:Minkowsky}.\\
\\
\indent A Codazzi operator $b$ on a hyperbolic surface $(\Sigma_{\mathpzc{p}},g)$ with cone singularities is a smooth $g$-self-adjoint operator on the tangent bundle of $\Sigma_{\mathpzc{p}}$ that satisfies the Codazzi equations
\[
	(d^{\nabla^{g}}b)(X,Y):=\nabla^{g}_{X}(b(Y))-\nabla^{g}_{Y}(b(X))-b([X,Y])=0 \ \
\]
for all vector fields  $X$, $Y$ on $\Sigma_{\mathpzc{p}}$, where $\nabla^{g}$ is  the Levi-Civita connection of $g$. Moreover, we require some regularity at the singular points: we assume that
\[	
	\int_{S} \trace({b^{2}}) dA_{g}=\int_{S} \| b\|^{2} dA_{g} < +\infty \ ,
\]
where $A_g$ is the area form of the singular metric $g$, and $\| \cdot \|$ denotes the operator norm with respect to the metric $g$. In this case we say that $b$ is \emph{integrable} and we write $b \in L^{2}(\Sigma_{\mathpzc{p}},g)$. \\
\\
\indent Traceless Codazzi operators on $(\Sigma_{\mathpzc{p}}, g)$ are related to meromorphic quadratic differentials on $\Sigma_{\mathpzc{p}}$ and the integrability condition is linked to the order of the poles. Let us explain this relation in more details. Let $q$ be a meromorphic quadratic differential on $\Sigma_{p}$ with respect to the conformal class of the hyperbolic metric $g$: by this we mean that $q$ is holomorphic on $\Sigma_{\mathpzc{p}}$ and possibly with poles at the marked points $p_i$. We can define the operator $b_{q}$ by the relation
\[
	\Re(q)(X,Y)=g(b_{q}(X),Y) \
\]
for all vector fields $X$, $Y$ on $\Sigma_\mathpzc{p}$.
\begin{prop}\label{prop:estimatesB} The operator $b_{q}$ is traceless and Codazzi, and it is integrable if and only if $q$ has at most simple poles at the singular points. Moreover, $b_{q}$ extends continuously to $0$ at the singular points.
\end{prop}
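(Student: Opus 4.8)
The plan is to reduce all four assertions to an explicit computation in a conformal coordinate $z$ centred at a marked point $p_i$, in which
\[
	g = e^{2u_i}|z|^{2\beta_i}\,|dz|^{2}, \qquad q = f(z)\,dz^{2},
\]
with $u_i$ bounded and $f$ holomorphic on a punctured neighbourhood of $0$. Writing $\lambda = e^{2u_i}|z|^{2\beta_i}$ for the conformal factor and $f = a+ib$, the defining relation $\Re(q)(X,Y)=g(b_q(X),Y)$ gives in the coordinate frame $(\partial_x,\partial_y)$ the matrix
\[
	b_q = \frac{1}{\lambda}\begin{pmatrix} a & -b \\ -b & -a \end{pmatrix}.
\]
From this expression the tracelessness is immediate, and the eigenvalues are $\pm|f|/\lambda$, so that $\trace(b_q^{2}) = 2|f|^{2}/\lambda^{2}$ and $b_q\to 0$ (as an endomorphism) precisely when $|f|/\lambda\to 0$. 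These two facts will drive the integrability and continuity statements. Away from the marked points every relevant quantity is smooth, so the genuine content of the proposition is concentrated at the cone points.

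For the Codazzi property I would invoke the classical correspondence between traceless $g$-self-adjoint Codazzi operators and holomorphic quadratic differentials: in an isothermal coordinate, the equation $d^{\nabla^{g}}b_q=0$ for a traceless operator is equivalent, after the conformal factor cancels, to the Cauchy--Riemann equation $\partial_{\bar z}f = 0$. Since $q$ is assumed holomorphic on $\Sigma_{\mathpzc{p}}$, this shows $b_q$ is Codazzi on $\Sigma_{\mathpzc{p}}$. One may either carry out the short Christoffel-symbol computation directly, starting from $g=e^{2\sigma}|dz|^{2}$, or simply cite the standard Hopf-differential argument.

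For integrability and the continuous extension I would analyse the local formulas near each $p_i$. Using $dA_g=\lambda\,dx\,dy$,
\[
	\int \trace(b_q^{2})\,dA_g = 2\int \frac{|f|^{2}}{\lambda}\,dx\,dy = 2\int |f|^{2}e^{-2u_i}|z|^{-2\beta_i}\,dx\,dy,
\]
so, $u_i$ being bounded, convergence near $0$ is governed by $|f|^{2}|z|^{-2\beta_i}$. If $q$ has a pole of order $k$ at $p_i$ then $|f|^{2}\sim|z|^{-2k}$, the integrand behaves like $|z|^{-2k-2\beta_i}$, and in polar coordinates $\int_{0}r^{-2k-2\beta_i+1}\,dr$ converges if and only if $k<1-\beta_i$. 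For the continuous extension, the eigenvalues satisfy $|f|/\lambda\sim|z|^{-k-2\beta_i}$, which tends to $0$ exactly when $k+2\beta_i<0$.

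The crux of the statement --- and the step where the hypothesis $\theta_i\in(0,\pi)$ enters decisively --- is the bookkeeping of these two exponents against the range $\beta_i\in(-1,-\tfrac{1}{2})$. Since $1-\beta_i\in(\tfrac{3}{2},2)$, the integer condition $k<1-\beta_i$ is met precisely by $k\le 1$: a simple pole ($k=1$) is integrable because $\beta_i<0$, while a double pole ($k=2$) fails because it would force $\beta_i<-1$. This gives the integrability characterisation. For the same reason, at a simple pole $|f|/\lambda\sim|z|^{-1-2\beta_i}$ with exponent $-1-2\beta_i>0$ precisely because $\beta_i<-\tfrac{1}{2}$, i.e. because $\theta_i<\pi$; hence $b_q\to 0$ at $p_i$ and extends continuously by the zero operator. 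I expect the only genuinely delicate point to be exactly this interplay: the angle restriction $\theta_i<\pi$ is what simultaneously forbids anything worse than a simple pole and forces the vanishing of $b_q$ at the singularity.
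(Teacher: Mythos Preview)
Your proof is correct and follows essentially the same route as the paper's: both work in a conformal coordinate near a cone point, write out the matrix of $b_q$ explicitly, read off tracelessness, identify the Codazzi condition with the Cauchy--Riemann equations, and then analyse the asymptotics of $|f|^{2}|z|^{-2\beta_i}$ and $|f|\,|z|^{-2\beta_i}$ to decide integrability and the continuous extension. You supply a little more detail than the paper in pinning down why the integer threshold is exactly $k\le 1$ on the range $\beta_i\in(-1,-\tfrac12)$, but the argument is the same.
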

\begin{proof} Let $z$ be a conformal coordinate centered at the puncture $p_i$. In a neighbourhood $U_i$ of $p_i$ we can write
\[
	q=f(z)dz^{2} \ \ \  \ \  \text{and} \ \ \ \ \  h_{|_{U_i\setminus \{p_i\}}}=e^{2u_i}|z|^{2\beta_i}|dz|^{2} ,
\]
where $u_i:U \rightarrow \R$ is a continuous function and $\beta_i \in (-1,-1/2)$. The operator $b_{q}$ can thus be expressed as
\[	
	b_{q}=h^{-1}\Re(q)=e^{-2u_i}|z|^{-2\beta_i} \left(\begin{matrix}
									\Re(f) & -\Ima(f) \\
									-\Ima(f)  & -\Re(f)
								\end{matrix}\right) \ .
\]
Clearly, $b_{q}$ is traceless and an easy computation shows that the Cauchy-Riemann equations for $f$ on $U_i\setminus \{p_i\}$ are equivalent to the Codazzi condition for $b_{q}$. Moreover,
\[
	\int_{U_i} \trace{(b_{q}^{2})}dA_{g}= \int_{U_i} 2e^{-2u_i}|z|^{-2\beta_i}|f(z)|^{2}dz\wedge d\bar{z}
\]
is finite if and only if $f$ has at most a simple pole at $p_i$. \\
If $q$ has at most simple pole at $p_i$, we deduce that $|f(z)|^{2} \leq C|z|^{-2}$, hence $\| b_{q}\|^{2} \leq C|z|^{-4\beta_i-2}$, which tends to $0$ as $|z|\to 0$ by our assumption on the cone angles.
\end{proof}

Conversely, given a traceless, Codazzi and integrable operator $b$ on $(\Sigma_\mathpzc{p}, g)$, there exists a unique meromorphic quadratic differential $q$ on $\Sigma_\mathpzc{p}$ with at most simple poles at singularities with respect to the conformal class of $g$, such that
\[
	\Re(q)(X,Y)=g(b(X),Y) \
\]
for all vector fields $X$, $Y$ on $\Sigma_\mathpzc{p}$.\\
\\
\indent In this way, we can identify the tangent space of $\Tp$ at $g$ with the space of meromorphic quadratic differentials on $\Sigma_{\mathpzc{p}}$ (with respect to the conformal class of $g$) with at most simple poles at the marked points. The identification goes as follows (see \cite{jeremythesis}) for more details). Let $g_{t}$ be a path of hyperbolic metrics on $\Sigma_{\mathpzc{p}}$ with cone singularities of angles $\bm{\theta}$ at $\mathpzc{p}$. The tangent vector
\[
	\tilde{h}=\frac{d}{dt}_{|_{t=0}}g_{t}
\]
can be decomposed uniquely as
\[
	\tilde{h}=h+\mathcal{L}_{V}g_0
\]
where $\mathcal{L}_{V}$ denotes the Lie derivative of $g_0$ induced by an element $V$ in the Lie algebra  of the group of diffeomorphisms of $\Sigma_{\mathpzc{p}}$ isotopic to the identity, and $h$ is an integrable, divergence-free and traceless bilinear form. This implies that $b=g_0^{-1}h$ is an integrable, Codazzi and traceless operator, hence it is the real part of a meromorphic quadratic differential $q$. Moreover, the integrability condition on $b$ implies that $q$ has at most simple poles at the marked points.

\section{Geometric models}\label{models}

In this section we introduce the local models for $(2+1)$-spacetimes with particles and constant sectional curvature. We will also recall the notion of global hyperbolicity and the theory of space-like embeddings of surfaces.

\subsection{Minkowski space with cone singularities}
We will denote by $\R^{2,1}$ the $3$-dimensional Minkowski space. Given $\theta \in (0,\pi)$, the \emph{Minkowski space with cone singularities of angle $\theta$} along a time-like geodesic is obtained according to the following procedure. Consider two totally geodesic time-like half-planes intersecting at a time-like geodesic $l$ bounding a wedge of angle $\theta$, cut out the wedge and re-glue the two time-like half-planes by a rotation 
fixing the line $l$. An easy computation in polar coordinates shows that the resulting space $\R^{2,1}_{\theta}$ is endowed with the metric
\[
	g_{\R^{2,1}_{\theta}}=-dt^{2}+dr^{2}+r^{2}d\phi^{2} \ \ \ t \in \R \ , r \in \R^{+} \ \phi \in \R/\theta\Z\ ,
\]
which carries cone singularities of angle $\theta$ along the time-like line $l=\{r=0\}$.

\subsection{Anti-de Sitter space with cone singularities}

We first recall the definition of the classical anti-de Sitter space and we then introduce cone singularities along time-like geodesics.\\
\\
\indent Let $\R^{2,2}$ be the real $4$-dimensional vector space endowed with the bilinear form of signature $(2,2)$:
\[
	\langle x,  y \rangle_{2,2}=-x_{0}y_{0}+x_{1}y_{1}+x_{2}y_{2}-x_{3}y_{3} \ .
\]
We define the \emph{anti-de Sitter (AdS) space} as
\[
	\widehat{AdS}_{3}=\{ x \in \R^{2,2} \ | \ \langle x, x\rangle_{2,2}=-1 \} \ .
\]
It is easy to verify that $\widehat{AdS}_{3}$ is a $3$-dimensional manifold, diffeomorphic to a solid torus, and the restriction of the bilinear form $\langle \cdot, \cdot \rangle_{2,2}$ to the tangent space of $\widehat{AdS}_{3}$ induces a Lorentzian structure of constant sectional curvature $-1$. \\
\\
\indent In order to introduce cone singularities, it is more convenient to work in the universal cover. Let us consider here the hyperbolic plane $\h^{2}$ realised as one connected component of the two-sheeted hyperboloid in Minkowski space. That is,
\[
	\mathbb{H}^{2}=\{( x_0, x_1, x_2) \in \R^{2,1} \ | -x_0^2+x_1^2+ x_2^2=-1, x_2>0 \} \ .
\]
The map
\begin{align*}
	F: \h^{2}\times S^{1} &\rightarrow \widehat{AdS}_{3}\\
	(x_{0}, x_{1}, x_{2}, e^{i\theta}) &\mapsto (x_{0}\cos(\theta), x_{1}, x_{2}, x_{0}\sin(\theta))
\end{align*}
is a diffeomorphism, hence $\h^{2} \times S^{1}$ is isometric to the anti-de Sitter space, if endowed with the pull-back metric	
\[
	(F^{*}g_{\widehat{AdS_{3}}})_{(x, e^{i\theta})}=(g_{\h^{2}})_{x}-x_{0}^{2}d\theta^{2} \ .
\]
We easily deduce that the universal cover of anti-de Sitter space can be realised as $\widetilde{AdS}_{3} \cong \h^{2}\times\R$ endowed with the Lorentzian metric:
\[
	(g_{\widetilde{AdS_3}})_{(x,t)}=(g_{\h^{2}})_{x}-x_{0}^{2}dt^{2} \ .
\]
By considering polar coordinates on $\h^{2}$, the universal cover of anti-de Sitter space can be parameterised as
\[
   \widetilde{AdS}_{3}=\{(\rho,\varphi, t) \ | \ \rho \in \R^{+}, \ \varphi \in \R/2\pi\Z, \ t \in \R\}
\]
and the Lorentzian metric can be written as
\[
	g_{\widetilde{AdS_3}}=-\cosh^{2}(\rho)dt^{2}+\sinh^{2}(\rho)d\varphi^{2}+d\rho^{2} \ .
\]
\indent We now introduce cone singularities along time-like lines in this model. Let $\theta>0$. The anti-de Sitter (AdS) space with cone singularities of angles $\theta$ is obtained according to the following procedure: consider two totally geodesic time-like half-planes in $\widetilde{AdS_3}$ intersecting along the time-like geodesic $l=\{\rho=0\}$ and bounding a wedge of angle $\theta$,  cut the wedge along these two half-planes and then re-glue the two sides of the wedge by a rotation fixing $l$. 
We will denote by $AdS_{\theta}$ the resulting $3$-manifold, called the \emph{AdS space with cone singularities of angle $\theta$}. We call the line $l$ the \emph{singular line} in $AdS_{\theta}$ and call $\theta$ the \emph{total angle} around the singular line $l$. It is clear from the construction that a parametrisation of $AdS_{\theta}$ is given by
\[
  AdS_{\theta}=\{(\rho,\varphi,t) \ | \ \rho \in \R^{+}, \ \varphi \in \R/\theta\Z, \ t \in \R\}
\]
and it is endowed with the Lorentzian metric
\[
	g_{AdS_{\theta}}=-\cosh^{2}(\rho)dt^{2}+\sinh^{2}(\rho)d\varphi^{2}+d\rho^{2} \ .
\]
We notice, in particular, that $g_{AdS_{\theta}}$ carries cone singularities of angle $\theta$ along the line $l$ and outside this line the metric is smooth and with constant sectional curvature $-1$.

\subsection{De Sitter space with cone singularities}
Now we introduce the related notion in the de Sitter setting.
\\
\indent Consider the real $4$-dimensional vector space $\mathbb{R}^{3,1}$ endowed with the bilinear form of signature $(3,1)$:
\[
	\langle x,  y \rangle_{3,1}=-x_{0}y_{0}+x_{1}y_{1}+x_{2}y_{2}+x_{3}y_{3} \ .
\]

The \emph{de Sitter (dS) space} is defined as the set:
\[
 dS_{3}=\{ x \in \R^{3,1} \ | \ \langle x, x\rangle_{3,1}=1 \} \ ,
\]
It is a 3-dimensional manifold diffeomorphic to $\mathbb{S}^2\times \mathbb{R}$, where $\mathbb{S}^2$ is a unit sphere, and the restriction of the bilinear form $\langle \cdot,  \cdot \rangle_{3,1}$ to the tangent space of $dS_{3}$ induces a Lorentzian metric of constant sectional curvature $+1$. It is time-orientable and we choose the time orientation for which the curve $t\mapsto(\sinh t, \cosh t, 0,0)$ is future-oriented. The group Isom$_{0}({dS}_{3})$ of time-orientation and orientation preserving isometries of $dS_{3}$ is $SO^{+}(3,1)\cong PSL_2(\mathbb{C})$.

Indeed, the map
	$$ G: \mathbb{S}^{2} \times  \R \rightarrow dS_{3}$$
	$$ (\varphi, \alpha, t) \mapsto (\sinh t, \cosh t\sin\varphi \cos\alpha, \cosh t\sin\varphi \sin\alpha, \cosh t\cos\varphi)$$
is a diffeomorphism, hence $\mathbb{S}^{2} \times  \R$ is isometric to $dS_{3}$, if endowed with the pull-back metric	
\[
	(G^{*}g_{dS_{3}})_{(\varphi, \alpha, t)}= -dt^{2}+\cosh^{2}(t)(d\varphi^{2}+\sin^{2}(\varphi)d\alpha^{2}) \ ,
\]
where $(t,\varphi,\alpha)\in\mathbb{R}\times [0,\pi]\times\mathbb{R}/2\pi\mathbb{Z}$.\\
\\
\indent We now introduce cone singularities along time-like lines in this model. Let $\theta>0$.  We define the \emph{de Sitter space with cone singularities of angle $\theta$} as the space
\begin{equation*}
        dS_{\theta}:=\{(t,\varphi,\alpha)\in\mathbb{R}\times [0,\pi]\times\mathbb{R}/\theta\mathbb{Z}\}
\end{equation*}
     with the metric
\begin{equation*}
      g_{dS_{\theta}}=-dt^{2}+\cosh^{2}(t)(d\varphi^{2}+\sin^{2}(\varphi)d\alpha^{2}).
\end{equation*}
We call $\R\times \{0,\pi\}\times \mathbb{R}/\theta\mathbb{Z}$ the \emph{singular line} in $dS_{\theta_0}$ and call $\theta$ the \emph{total angle} around this singular line. One can check that $dS_{\theta}$ is a Lorentzian manifold of constant sectional curvature $+1$ outside the singular line. Indeed, it is obtained from the spherical surface with two cone singularities of angle $\theta$ by taking a warped product with $\mathbb{R}$.

\subsection{Convex GHM spacetimes with particles}
Let us denote by $X_{\theta}$ any one of the models $AdS_{\theta}$, $dS_{\theta}$ or $\R^{2,1}_{\theta}$ introduced above. We are interested in a special class of manifolds locally modelled on 
$X_{\theta}$.

\indent A \emph{constant curvature cone-manifold} is a connected, Lorentzian $3$-manifold $M$ in which every point $p$ has a neighbourhood isometric to an open subset of $X_{\theta}$ for some $\theta>0$. If $\theta$ can be taken to be equal to $2\pi$, we say that $p$ is a \emph{regular} point, otherwise $\theta$ is uniquely determined and we say that $p$ is a \emph{singular} point. \\
\\
\indent In order to define the notion of global hyperbolicity in this singular context, we need to introduce the concept of orthogonality along the singular locus.

\begin{defi}Let $S\subset X_{\theta}$ be a space-like surface which intersects a singular line $l$ at a point $p$. We say that $S$ is \emph{orthogonal} to $l$ at $p$ if the causal distance $d$  to the totally geodesic plane $P$ orthogonal to the singular line at $p$ satisfies
\[
	\lim_{q \to p, \\ q \in S}\frac{d(q,P)}{d_{S}(p,q)}=0 \ ,
\]
where $d_{S}(p,q)$ is the distance between $p$ and $q$ on the surface $S$.

Similarly, if $S$ is a space-like surface embedded in a constant curvature cone-manifold $M$ which intersects a singular line $l'$ at a point $p'$, we say that $S$ is \emph{orthogonal} to $l'$ at $p'$ if there exists a neighbourhood $U\subset M$ of $p'$ isometric to a neighbourhood of a singular point in $X_{\theta}$ for some $\theta\in(0,\pi)$, such that the restriction of this isometry sends $S\cap U$ to a surface orthogonal to $l'$ in $X_{\theta}$.
\end{defi}

\begin{remark}The orthogonality condition ensures that the induced metric on the surface $S$ carries cone singularities of the same angle as the total angle around the singular line at the intersection with the singular locus (\cite{Schlenker-Krasnov}).
\end{remark}

\begin{defi}\label{def:CGHM} A Convex Globally Hyperbolic Maximal (CGHM) manifold with particles is a constant curvature cone-manifold $M$ homeomorphic to $\Sigma\times \R$ such that the singularities along the time-like lines $\{p_{i}\}\times \R$ have fixed cone angles $\theta_{i}\in(0,\pi)$. It also satisfies the following conditions:
\begin{itemize}
	\item Convex Global Hyperbolicity: $M$ contains a 
strictly future-convex space-like surface (called Cauchy surface) which intersects every inextensible causal curve in exactly one point and which is orthogonal to the singular locus.
	\item Maximality: if $M'$ is another manifold with particles and $\phi:M \rightarrow M'$ is any isometric embedding sending a Cauchy surface to a Cauchy surface, then $\phi$ is a global isometry.
\end{itemize}
\end{defi}

\begin{remark}The convexity condition on the Cauchy surface is a technical assumption that guarantees the existence, in the anti-de Sitter setting, of the non-empty smallest convex subset of $M$ homotopic equivalent to $M$, called the \emph{convex core}. We do not know if every globally hyperbolic maximal AdS manifold always contains a convex Cauchy surface.
\end{remark}

\begin{remark} By Definition \ref{def:CGHM} a CGHM manifold with particles modelled on de Sitter or Minkowski space is naturally future complete.
\end{remark}

\subsection{Surfaces in CGHM manifolds with particles}
The classical theory of immersions of space-like surfaces into pseudo-Riemannian manifold can be adapted to this singular setting. \\
\\
\indent Let $S\subset M$ be a space-like surface embedded in a CGHM 
manifold  with particles. 
Suppose that $S$ is orthogonal to the singular lines. The restriction of the Lorentzian metric on $M$ to the tangent space of $S$ induces a Riemannian metric $I$, called the \emph{first fundamental form}, which carries a cone singularity at the intersection with the singular line $\{p_{i}\}\times \R$  of the same angle $\theta_{i}$. \\
The shape operator $B:TS \rightarrow TS$ is defined as
\[
	B(u)= -\nabla_{u}\eta \ ,
\]
where $\eta$ is the future-directed unit normal vector field on $S$ and $\nabla$ is the Levi-Civita connection of $M$. It turns out that $B$ is self-adjoint with respect to the first fundamental form, hence $B$ is diagonalisable at every point and its eigenvalues are called \emph{principal curvatures} of $S$. The \emph{second fundamental form} is the bilinear form on $TS$ defined by
\[
	\II(u,v)=I(B(u),v) \ .
\]
The \emph{third fundamental form} on $S$ is defined as
\[
	\III(u,v)=I(B(u),B(v)) \ .
\]
The \emph{mean curvature} of $S$ is then defined as
\[
	H=\frac{\trace(B)}{2} \ .
\]

We say that a space-like surface $S\subset M$ orthogonal to the singular lines is \emph{(strictly) future-convex} (resp. \emph{past-convex}) if its future $I^{+}(S)$ (resp. its past $I^{-}(S)$) is (strictly) geodesically convex. With our convention, $S$ is \emph{future-convex} (resp. \emph{part-convex}) if and only if at each regular point of $S$ the principal curvatures are negative (resp. positive).\\
\\
\indent We recall that for space-like surfaces embedded in manifolds locally modelled on 
$X_{\theta}$ the classical Gauss-Codazzi equations assume the following form:
\begin{align*}
	d^{\nabla^{I}}B&=0 \ \ \ \ \ \ \ \ \ \ \ \ \ \ \ \ \ \  \ \ \ \ \ \ \ \ \ \ \text{Codazzi \ equation}\\
	\det(B)&=\Sec(X)-K_{I} \ \ \ \ \ \ \ \ \ \ \ \ \text{Gauss \ equation}
\end{align*}
where $\nabla^{I}$ is the Levi-Civita connection of $I$, $K_{I}$ denotes the Gauss curvature of the induced metric $I$, and $\Sec(X)$ is the sectional curvature of the regular set $X$ of model space $X_{\theta}$.

\begin{remark}Following the convention of \cite{Tro}, we only consider metrics on $\Sigma_{\mathpzc{p}}$, whose Gauss curvature extends to a continuous function at $\mathpzc{p}$.
\end{remark}

We concule this section with an elementary result that will be crucial in the sequel.
\begin{prop}\label{integrability} Let $S \subset M$ be a space-like surface embedded in CGHM manifold with particles. Suppose that $S$ is orthogonal to the singular lines and has constant mean curvature $H$. Then the trace-less part of the shape operator of $S$ is integrable.
\end{prop}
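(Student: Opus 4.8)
The plan is to reduce the claim to the integrability criterion for Codazzi operators established in Proposition \ref{prop:estimatesB}. Write $B = B_0 + HI^{-1}I = B_0 + H\cdot\mathrm{id}$, where $B_0$ is the traceless part of the shape operator; since $H$ is constant and the identity is trivially Codazzi and self-adjoint, the Codazzi equation $d^{\nabla^I}B = 0$ for the full shape operator immediately gives $d^{\nabla^I}B_0 = 0$. Thus $B_0$ is a traceless, $I$-self-adjoint, Codazzi operator on the singular hyperbolic-type surface $(S,I)$, where $I$ carries cone singularities of angles $\theta_i \in (0,\pi)$ at the punctures by the orthogonality assumption. By the correspondence recalled in Section \ref{background}, $B_0 = b_q$ is the real part of a meromorphic quadratic differential $q$ on $\Sigma_{\mathpzc{p}}$ with respect to the conformal class of $I$, and Proposition \ref{prop:estimatesB} tells us that $B_0$ is integrable precisely when $q$ has at most simple poles at the marked points. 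So the whole problem becomes: \emph{show that $q$ has at most a simple pole at each $p_i$}.

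To control the order of the pole I would work in a conformal coordinate $z$ centered at $p_i$ in which $I = e^{2u_i}|z|^{2\beta_i}|dz|^2$, and use the Gauss equation $\det(B) = \Sec(X) - K_I$. Since $\det(B) = \det(B_0) + H^2$ (using tracelessness, $\det(B_0+H\,\mathrm{id}) = \det B_0 + H\,\mathrm{tr}\,B_0 + H^2 = \det B_0 + H^2$), and $\det B_0 = -\tfrac12\,\mathrm{tr}(B_0^2) = -\|B_0\|^2/2$ for a traceless operator, the Gauss equation becomes $-\|B_0\|^2/2 + H^2 = \Sec(X) - K_I$. Both $\Sec(X)$ and $H$ are constants, and by the convention recalled just before the statement, $K_I$ extends continuously to the marked points. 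This forces $\|B_0\|^2 = \|b_q\|^2$ to remain \emph{bounded} as $z \to 0$. In the coordinate expression, $\|b_q\|^2 = e^{-4u_i}|z|^{-4\beta_i}|f(z)|^2$ where $q = f(z)\,dz^2$, so boundedness of $\|b_q\|^2$ together with the bounded function $u_i$ yields $|f(z)|^2 \leq C|z|^{4\beta_i}$, i.e. $|f(z)| \leq C|z|^{2\beta_i}$ with $2\beta_i > -2$. Hence $|f(z)| = o(|z|^{-2})$, which excludes a pole of order $\geq 2$ and leaves at most a simple pole at each $p_i$.

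Having bounded the pole order by the Gauss equation, the integrability then follows directly: by the ``only if'' direction of Proposition \ref{prop:estimatesB}, $q$ with at most simple poles gives $\int_S \mathrm{tr}(b_q^2)\,dA_I < +\infty$, which is exactly the statement that $B_0$ is integrable. I expect the main obstacle to be the regularity bookkeeping near the cone points: one must ensure that $K_I$ genuinely extends continuously (guaranteed by the convention recalled above and by $\theta_i < \pi$, which keeps the cone geometry tame) and that $u_i$ stays bounded so that the decay estimate on $f$ is clean rather than merely asymptotic. The computation $\det(B) = \det(B_0) + H^2$ is elementary but is the conceptual crux, since it converts the \emph{constancy} of $H$ into an a priori bound on the traceless part, which is precisely what turns the fully geometric hypothesis into the analytic integrability condition.
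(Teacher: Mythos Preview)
Your argument is correct but follows a different route than the paper's. The paper's proof is a two-line application of Gauss--Bonnet: writing $B=B_{0}+HE$ with $\pm\lambda$ the eigenvalues of $B_{0}$, the Gauss equation gives $K_{I}=\Sec(X)-\det(B)=\Sec(X)-H^{2}+\lambda^{2}$, and integrating over $S$ using Troyanov's Gauss--Bonnet formula yields
\[
2\pi\chi(\Sigma,\bm{\theta})=\int_{S}K_{I}\,dA_{I}=(\Sec(X)-H^{2})\,\mathrm{Area}(S)+\int_{S}\lambda^{2}\,dA_{I}\ ,
\]
so $\int_{S}\trace(B_{0}^{2})\,dA_{I}=2\int_{S}\lambda^{2}\,dA_{I}$ is finite. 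This is a global, integrated argument.

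Your approach instead uses the Gauss equation \emph{pointwise}: continuity of $K_{I}$ at the punctures forces $\|B_{0}\|^{2}=2(H^{2}-\Sec(X)+K_{I})$ to stay bounded, from which you extract the bound $|f(z)|\leq C|z|^{2\beta_{i}}$ and hence at most simple poles for the associated quadratic differential, then appeal to the integrability criterion of Proposition~\ref{prop:estimatesB}. This is legitimate; note that once you have $\|B_{0}\|$ bounded you can skip the detour through pole orders entirely, since $\int_{S}\|B_{0}\|^{2}\,dA_{I}\leq \sup\|B_{0}\|^{2}\cdot\mathrm{Area}(S)<\infty$ directly. Your route actually yields the stronger conclusion that $B_{0}$ is bounded (not merely $L^{2}$), whereas the paper's Gauss--Bonnet argument is slightly more robust in that it only needs $K_{I}$ integrable rather than continuous. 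Two small caveats: Proposition~\ref{prop:estimatesB} is stated for hyperbolic $g$, so you should remark that its computation goes through verbatim for any conical metric $e^{2u_{i}}|z|^{2\beta_{i}}|dz|^{2}$; and calling $q$ ``meromorphic'' before you have controlled its behaviour at $\mathpzc{p}$ is premature---a priori it is only holomorphic on $\Sigma_{\mathpzc{p}}$, and your growth estimate is precisely what upgrades this to meromorphic with simple poles.
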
 
\begin{proof} Let us write $B=B_{0}+HE$ and denote with $\lambda$ the positive eigenvalue of $B_{0}$. We need to prove that 
\[
	\int_{S} \trace(B_{0}^{2}) \ dA_{I} =2\int_{S} \lambda^{2} \  dA_{I} < +\infty \ .
\]
By the Gauss-Bonnet formula for surfaces with cone singularities (\cite[Proposition 1]{Tro}) and the Gauss equation we have 
\begin{align*}
	2\pi \chi(\Sigma_{\mathpzc{p}})&= \int_{S} K_{I} \ dA_{I} = \int_{S} (\Sec(X)-\det(B)) \ dA_{I} \\
		&=\int_{S} (\Sec(X)-H^{2}+\lambda^{2}) \ dA_{I}
\end{align*}
and the result follows since $H$ and $\Sec(X)$ are constant.
\end{proof}

\section{The Minkowski case}\label{sec:Minkowsky}


Let $\Sigma_{\mathpzc{p}}$ be a closed, oriented surface with fixed $n$ marked points $\mathpzc{p}=(p_{1}, \dots, p_{n})$ removed. We denote by $\mathcal{G}\mathcal{H}^{0}_{\bm{\theta}}(\Sigma_{\mathpzc{p}})$ the deformation space of CGHM Minkowski structures with particles on 
$\Sigma\times \R$ with fixed cone angles $\bm{\theta}=(\theta_{1}, \dots, \theta_{n})$, up to diffeomorphisms on $\Sigma\times \R$ isotopic to the identity fixing each singular line.

By studying strictly convex space-like surfaces embedded in CGHM Minkowski manifolds with particles, Bonsante and Seppi obtained a parametrisation of the deformation space $\mathcal{G}\mathcal{H}^{0}_{\bm{\theta}}(\Sigma_{\mathpzc{p}})$.
\begin{teo}\cite[Corollary G]{BSCodazzi}\label{thm:parameterisationMink} $\mathcal{G}\mathcal{H}^{0}_{\bm{\theta}}(\Sigma_{\mathpzc{p}})$ is parameterised by $T\Tp$.
\end{teo}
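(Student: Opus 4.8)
The plan is to construct an explicit homeomorphism $\mathcal{G}\mathcal{H}^{0}_{\bm{\theta}}(\Sigma_{\mathpzc{p}}) \to T\Tp$ by reading off, from the embedding data of a strictly convex Cauchy surface, a hyperbolic metric with cone singularities together with a tangent vector. Given $M \in \mathcal{G}\mathcal{H}^{0}_{\bm{\theta}}(\Sigma_{\mathpzc{p}})$, I would first fix a strictly future-convex space-like Cauchy surface $S \subset M$ orthogonal to the singular lines, whose existence is part of Definition \ref{def:CGHM}. Lifting $S$ to the universal cover inside $\R^{2,1}$ and taking the future unit normal yields a Gauss map $G : \tilde S \to \h^{2}$ equivariant with respect to the linear part $\rho_0$ of the holonomy of $M$. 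Strict convexity makes the shape operator $B$ invertible away from the cone points, so $G$ is a local diffeomorphism and the third fundamental form $\III = G^{*}g_{\h^{2}}$ is a metric of constant curvature $-1$; orthogonality of $S$ to the singular lines forces $\III$ to carry a cone singularity of angle exactly $\theta_i$ at $p_i$. Thus $h := [\III]$ is a well-defined point of $\Tp$.

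The shape operator $B$ is self-adjoint and Codazzi for $I$ by the Gauss--Codazzi equations, and transporting it through $G$ produces a self-adjoint Codazzi operator $\hat b$ on $(\Sigma, h)$. A Gauss--Bonnet argument as in Proposition \ref{integrability} --- using that $\Sec = 0$ in the flat case and that $H$ stays bounded near each particle by the local model --- shows that the traceless part of $\hat b$ is integrable, so by Proposition \ref{prop:estimatesB} and the discussion following it, $\hat b$ determines a meromorphic quadratic differential $q$ with at most simple poles at $\mathpzc{p}$, hence a tangent vector to $\Tp$ at $h$. I would then check that the pair $(h,q)\in T\Tp$ is independent of the chosen surface: since $G$ is $\rho_0$-equivariant and $\III = G^{*}g_{\h^{2}}$, the surface $(\Sigma, \III)$ has holonomy $\rho_0$, an invariant of $M$, so $h$ depends only on $M$; likewise the tangent vector coincides with the class of the translation cocycle of the holonomy in $H^{1}(\pi_1(\Sigma_{\mathpzc{p}}), \R^{2,1}_{\rho_0}) \cong T_h\Tp$, which is manifestly surface-independent.

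To invert the construction, given $(h,q)$ I would uniformize $h$ by a Fuchsian representation $\rho_0$ into $\Isom(\h^{2})$ whose holonomy around each puncture is a rotation of angle $\theta_i$, realize $q$ as a self-adjoint Codazzi operator $\hat b$ on $(\h^{2}, \tilde h)$, and apply the fundamental theorem of surfaces in Minkowski space: prescribing third fundamental form $\tilde h$ and shape data $\hat b$ (the flat Gauss equation being automatically satisfied), one integrates the Gauss--Codazzi system to a $\rho_0$-equivariant immersion $\sigma : \h^{2} \to \R^{2,1}$, unique up to translation, the integration assembling into a translation cocycle whose class is prescribed by $q$. The image of $\sigma$ is a convex surface whose maximal globally hyperbolic domain of dependence is a CGHM Minkowski manifold with particles of angles $\theta_i$ mapping to $(h,q)$. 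Injectivity and well-definedness up to isotopy follow because the holonomy determines $M$ by the maximality clause of Definition \ref{def:CGHM} (Mess's argument \cite{Mess}, adapted to the singular setting), and continuity in both directions is routine once the construction is set up.

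The main obstacle is the analysis at the particles. One must show that the integration producing $\sigma$ is controlled near each puncture so that the axis $\{p_i\}\times\R$ is a genuine time-like geodesic carrying a cone singularity of angle exactly $\theta_i$, and that the resulting surface is orthogonal to it; dually, one must verify that the $L^{2}$-integrability of $\hat b$ --- equivalently the simple-pole condition on $q$ supplied by Proposition \ref{prop:estimatesB} --- matches precisely the regularity of the embedding at the cone points, and that the isomorphism $H^{1}(\pi_1(\Sigma_{\mathpzc{p}}), \R^{2,1}_{\rho_0}) \cong T_h\Tp$ persists under the boundary conditions imposed at the marked points. This interplay between the integrability of Codazzi tensors and the cone geometry along the singular locus, together with the adaptation of the domain-of-dependence construction to the presence of particles, is the technical heart of the argument.
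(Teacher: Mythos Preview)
This theorem is not proved in the paper; it is quoted from Bonsante--Seppi \cite[Corollary G]{BSCodazzi}. The paper only records what the map is: to a strictly convex Cauchy surface $S$ one associates the pair $(\widetilde h,\widetilde q)$ where $\widetilde h=\III$ is the third fundamental form and $\widetilde q$ is the meromorphic quadratic differential (for the conformal class of $\III$) whose real part is the traceless part of $B^{-1}$, the \emph{inverse} of the shape operator. So there is no ``paper's own proof'' to compare against, only this description of the correspondence.

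Your outline is in the right spirit and matches the Bonsante--Seppi strategy (Gauss map to $\h^2$, holonomy plus translation cocycle, identification with $T_h\Tp$). Two points, however, deserve correction. First, the Codazzi operator on $(\Sigma,\III)$ that encodes the tangent vector is not ``$B$ transported through $G$'' in any naive sense: it is the traceless part of $B^{-1}$. This is the operator that is $\III$-self-adjoint and Codazzi for $\nabla^{\III}$, and it is what makes the correspondence surface-independent (it matches the translation cocycle). Your phrasing is ambiguous enough that a reader could take $\hat b$ to be $B$ itself, which is wrong.

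Second, your integrability step has a genuine gap. Proposition~\ref{integrability} is stated and proved only for surfaces of \emph{constant} mean curvature: the Gauss--Bonnet identity there isolates $\int\lambda^2\,dA_I$ precisely because $H$ is constant. For an arbitrary strictly convex Cauchy surface, saying ``$H$ stays bounded near each particle'' is not enough to run that argument, and in any case the integrability you need is that of the traceless part of $B^{-1}$ with respect to $dA_{\III}$, not of $B_0$ with respect to $dA_I$. In \cite{BSCodazzi} this is handled by a separate analysis of Codazzi tensors on hyperbolic cone surfaces (their Theorem~E, invoked later in the paper as Proposition~\ref{prop:constructionsurfaces}(1)); you should either cite that or give an argument tailored to $B^{-1}$ and the metric $\III$.
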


The correspondence between $\mathcal{G}\mathcal{H}^{0}_{\bm{\theta}}(\Sigma_{\mathpzc{p}})$ and $T\Tp$ is obtained as follows: by assumption a CGHM Minkowski manifold $M$ with particles contains a strictly-convex space-like surface $S$ orthogonal to the singular locus. The parametrisation above associates to $S$ the couple $(\widetilde{h},\widetilde{q})\in \Tp$, where $\widetilde{h}$ is the third fundamental form of $S$ and $\widetilde{q}$ is the meromorphic quadratic differential with respect to the conformal class of $\widetilde{h}$ such that
\[
	\widetilde{h}^{-1}\Re(\widetilde{q})=B^{-1}-\frac{1}{2}\trace(B^{-1})E
\]
is the traceless part of the inverse of the shape operator $B$ on $S$. Moreover, the quadratic differential $\widetilde{q}$ and the isotopy class of $\widetilde{h}$ do not depend on the particular choice of the surface $S$.

\subsection{Relations between H-surfaces and K-surfaces in CGHM Minkowski manifolds with particles}
In this subsection, we suppose that a CGHM Minkowski manifold with particles contains an embedded space-like surface $S_{H}$ orthogonal to the singular locus with constant mean curvature $H<0$. We will show that $M$ contains also an embedded space-like surface $\Sigma_{K}$ orthogonal to the singular lines equidistant to $S_{H}$ with constant Gauss curvature $K=-4H^{2}$. We also prove that there are interesting relations between the induced metrics on $S_{H}$ and $\Sigma_{K}$, and their third fundamental forms, which will play a fundamental role in Section \ref{sec:newparameterisationMink}.\\
\\
\indent We start with an a-priori estimate on the principal curvatures of $S_{H}$.

\begin{lemma}\label{lm:est_princ_curv}Let $S_{H}$ be an $H$-surface orthogonal to the singular locus embedded in a Minkowski manifold with particles. Then the principal curvatures $\mu$ and $\mu'$ of $S_{H}$ satisfy
\[
	2H<\mu\leq H \ \ \ \ \ \ \ \ \text{and} \ \ \ \ \ \ \ \ H \leq \mu' <0
\]
\end{lemma}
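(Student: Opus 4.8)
The plan is to reduce the two--sided estimate to a single scalar bound. Write $B = HE + B_{0}$, where $B_{0}$ is the traceless part; since $H$ is constant, $B_{0}$ is still Codazzi. Let $\lambda \geq 0$ denote its positive eigenvalue. Then the principal curvatures are exactly $\mu = H - \lambda$ and $\mu' = H + \lambda$, so, using $H<0$, the four inequalities $2H < \mu \leq H$ and $H \leq \mu' < 0$ are all equivalent to the single pointwise claim
\[
	\lambda < -H = |H| \qquad \text{on } S_{H} ,
\]
the bound $\lambda \geq 0$ being automatic. Thus it suffices to prove $\lambda < |H|$.

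Next I would bring in the complex--analytic description of $B_{0}$. By Proposition \ref{integrability} the operator $B_{0}$ is integrable, and being traceless and Codazzi it is the real part of a meromorphic quadratic differential $q$ with at most simple poles at the marked points, with respect to the conformal class of the induced metric $I$. In a local conformal coordinate $z$ for $I$, writing $I = e^{2\phi}|dz|^{2}$ and $q = f\,dz^{2}$ with $f$ holomorphic, one has $\lambda = e^{-2\phi}|f|$, and a direct computation (using that $\log|f|$ is harmonic away from the zeros of $f$ together with the Liouville equation $\Delta_{0}\phi = -K_{I}e^{2\phi}$) gives
\[
	\Delta_{I}\log\lambda = 2K_{I} \qquad \text{wherever } \lambda > 0 .
\]
On the other hand, in the Minkowski setting $\Sec(X)=0$, so the Gauss equation reads $\det(B) = -K_{I}$; since $\det(B) = H^{2} - \lambda^{2}$, we get $K_{I} = \lambda^{2} - H^{2}$. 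Combining these yields $\Delta_{I}\log\lambda = 2(\lambda^{2} - H^{2})$ on $\{\lambda > 0\}$.

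Finally I would run a maximum principle argument on the compact surface $\Sigma$. By Proposition \ref{prop:estimatesB} the operator $B_{0}$ extends continuously by $0$ at the cone points, so $\lambda$ extends continuously to all of $\Sigma$ with value $0$ at the marked points; hence $\lambda$ attains its maximum, and if this maximum is positive it is attained at an interior regular point $p_{0}$ with $q(p_{0}) \neq 0$. There $\log\lambda$ is smooth and has an interior maximum, so $\Delta_{I}\log\lambda \leq 0$, which by the displayed identity forces $\lambda(p_{0})^{2} \leq H^{2}$, and therefore $\lambda \leq |H|$ everywhere. To obtain the strict inequality, suppose $\lambda(p_{0}) = |H|$: then $\lambda^{2} - H^{2} \leq 0$ throughout a neighbourhood, so $\log\lambda$ is superharmonic near its interior maximum, and the strong maximum principle makes it locally constant; a standard open--closed argument on the connected set $\Sigma_{\mathpzc{p}}\setminus\{q=0\}$ then gives $\lambda \equiv |H|$. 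This would force $K_{I} \equiv 0$, contradicting the Gauss--Bonnet formula $\int_{S} K_{I}\,dA_{I} = 2\pi\chi(\Sigma,\bm{\theta}) < 0$. Hence $\lambda < |H|$, which is the lemma.

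I expect the only delicate points to be the behaviour at the cone singularities, which is controlled by the continuous vanishing of $B_{0}$ from Proposition \ref{prop:estimatesB} (ensuring the maximum of $\lambda$ is interior), and the passage from the weak bound $\lambda \leq |H|$ to the strict one, which relies on the strong maximum principle together with the negativity of the total curvature.
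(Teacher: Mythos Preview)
Your proof is correct and follows essentially the same strategy as the paper: decompose $B=HE+B_{0}$, derive the identity $\Delta\log\lambda = c\cdot K_{I}$ from the Codazzi condition (the paper does this via the connection form in an orthonormal frame, you via the holomorphic quadratic differential, but the resulting PDE is the same up to sign conventions), and apply the maximum principle. The only differences are in the bookkeeping near the cone points and in the strictness step: the paper excises small neighbourhoods $U_{i}$ of the punctures and works on the compact bordered surface $S'=S_{H}\setminus\bigcup U_{i}$, obtaining the strict inequality from the fact that $\lambda<|H|$ on $\partial S'$; you instead use the continuous extension $\lambda\to 0$ at the punctures (Proposition~\ref{prop:estimatesB}) to locate the maximum in the interior, and rule out $\lambda\equiv|H|$ via Gauss--Bonnet. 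Both variants work; yours is slightly cleaner in that it avoids the auxiliary $\epsilon$-collar, while the paper's version makes the boundary behaviour more explicit.
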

\begin{proof}
By Proposition \ref{prop:estimatesB} and Proposition \ref{integrability}, the principal curvatures of $S_{H}$ coincide and are equal to $H$ at the punctures. By continuity, for every $\epsilon>0$, we can find neighbourhoods $U_{i}$ of the punctures $p_{i}$ for $i=1,\dots n$, such that the principal curvatures on $\partial U_{i}$ are in the interval $[H-\epsilon, H+\epsilon]$. Moreover, by choosing $\epsilon$ small enough, we can suppose that $H-\epsilon>2H$ (note that $H<0$).
We denote by $S'$ the surface obtained from $S_{H}$ by removing the open sets $U_{i}$. By construction, the principal curvatures of $S'$ are smooth functions. Let $B$ be the shape operator of $S'$. We consider $B_{0}=B-HE$, the traceless part of $B$. Since $H$ is constant, the operator $B_{0}$ is Codazzi. Let $e_{1}$ and $e_{2}$ be unit tangent vectors in a orthonormal frame of $S'$ that diagonalises $B_{0}$. Since $B_{0}$ is traceless, the eigenvalues are opposite, and we will denote by $\lambda\geq 0$ the eigenvalue of $e_{1}$. Let $\omega$ be the connection $1$-form of the Levi-Civita connection $\nabla$ for the induced metric on $S'$, defined by the relation
\[
	\nabla_{x}e_{1}=\omega(x)e_{2} \ .
\]
The Codazzi equation for $B_{0}$ can be read as follows,
\[
	\begin{cases} \lambda \omega(e_{1})=d\lambda(e_{2}) \\
				  \lambda \omega(e_{2})=-d\lambda(e_{1})  \ .
	\end{cases}
\]
If we define $\beta=\log(\lambda)$ we obtain
\[
	\begin{cases}  \omega(e_{1})=d\beta(e_{2}) \\
				   \omega(e_{2})=-d\beta(e_{1}) \ .
	\end{cases}
\]
Moreover, if we denote by $K$ the Gauss curvature of $S'$, we have
\[
	-K=d\omega(e_{1}, e_{2})=e_{1}(\omega(e_{2}))-e_{2}(\omega(e_{1}))-\omega([e_{1}, e_{2}])=\Delta \beta \ ,
\]
where $\Delta$ is the Laplacian that is positive at 
local maximum. On the other hand by the Gauss equation,
\[
	-K=\det(B)=\det(B_{0}+HE)=\det(B_{0})+H^{2}=-e^{2\beta}+H^{2} \ .
\]
Since the surface $S'$ is compact, the function $\beta$ assumes maximum at a point $x_{0}$. By the fact that $\Delta \beta(x_{0})\geq0$, we deduce that
\[
	\lambda=e^{\beta}\leq \sqrt{H^{2}}=-H \ .
\]

Since the eigenvalues of $B$ are $\mu'=\lambda+H$ and $\mu=-\lambda+H$, we obtain the claim. Notice, moreover, that if $\lambda$ assumes the value $-H$ at some point, this point must be in the interior of $S'$ (since $\lambda=\mu'-H \leq\epsilon<-H$ on $\partial S'$). Hence, by the Maximum Principle, $\lambda$ must be constant and equal to $-H$, but this is a contradiction, since on the boundaries of $S'$, we have
\[
	2H<H-\epsilon\leq\mu=-\lambda+H \ .
\]
\end{proof}

We deduce in particular that $S_{H}$ is always 
future-convex.
Before proving that there exists a surface equidistant to $S_{H}$, in the past of $S_{H}$, with constant Gauss curvature, let us recall how the embedding data of a space-like surface (orthogonal to the singular lines) behave along the normal flow.

\begin{lemma}\label{lm:principalcurvaturesMink} Let $S\subset M$ be a convex space-like surface embedded into a CGHM Minkowski $3$-manifold. Let $\psi^{t}:S \rightarrow M$ be the flow along the future-directed unit normal vector field $\eta$ on $S$. For every regular point $x \in S$, we have
\begin{enumerate}
	\item the induced metric and the shape operator on $\psi^{t}(S)$ are
\[
	I_{t}=I((E-tB)\cdot, (E-tB)\cdot)	\ \ \ \ \ \ \ \text{and} \ \ \ \ \ \ \
	B_{t}=(E-tB)^{-1}B \ ,
\]
	where $I$ and $B$ are the induced metric and the shape operator on $S$.
	\item the principal curvatures of $\psi^{t}(S)$ at the point $\psi^{t}(x)$ are given by
\[
	\lambda_{t}(\psi^{t}(x))=\frac{\lambda(x)}{1-\lambda(x)t} \ \ \ \ \ \ \ \ \
	\mu_{t}(\psi^{t}(x))=\frac{\mu(x)}{1-\mu(x)t} \ ;
\]
	\item $\psi^{t}$ is an embedding in a neighbourhood of $x$ if $t$ satisfies $\lambda(x)t\neq 1$ and $\mu(x)t\neq 1$, where $\lambda(x)$ and $\mu(x)$ are the principal curvatures of $S$ at $x$.
\end{enumerate}
\end{lemma}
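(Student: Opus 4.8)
The plan is to reduce everything to an elementary computation in flat Minkowski space, valid at any regular point $x$, where a neighbourhood of $x$ in $M$ is isometric to an open subset of $\R^{2,1}$. In such flat coordinates the normal geodesics issuing from $S$ are straight lines, so the flow takes the explicit form $\psi^{t}(x)=x+t\,\eta(x)$, and the Levi-Civita connection $\nabla$ of $M$ coincides with the ordinary directional derivative. I would parallel-transport all tangent spaces along the normal geodesic back to $T_{x}S$, so that $d\psi^{t}$, $I_{t}$ and $B_{t}$ can be compared with the data at $x$ inside the single vector space $T_{x}S$.

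First I would compute the differential of the flow. For $v\in T_{x}S$, differentiating $\psi^{t}(x)=x+t\eta(x)$ in the direction $v$ and using the definition $B(u)=-\nabla_{u}\eta$ together with flatness gives $d\psi^{t}(v)=v+t\nabla_{v}\eta=(E-tB)(v)$. Since parallel transport preserves the ambient metric, which restricts to $I$ on $TS$, the induced metric is $I_{t}(u,v)=\langle (E-tB)u,(E-tB)v\rangle=I((E-tB)u,(E-tB)v)$, the first claimed formula. This also shows $T_{\psi^{t}(x)}\psi^{t}(S)=(E-tB)(T_{x}S)$, so whenever $E-tB$ is invertible this equals $T_{x}S$, and the future-directed unit normal is unchanged: $\eta_{t}(\psi^{t}(x))=\eta(x)$, because $\eta(x)$ is a unit future-directed vector orthogonal to $T_{x}S$.

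Next, for the shape operator I would take $w=d\psi^{t}(v)=(E-tB)v\in T_{\psi^{t}(x)}\psi^{t}(S)$ and compute $B_{t}(w)=-\nabla_{w}\eta_{t}$. Choosing a curve $\gamma$ in $S$ with $\gamma'(0)=v$, the curve $\psi^{t}\circ\gamma$ has velocity $w$ and carries the normal field $\eta(\gamma(s))$; differentiating and using flatness yields $\nabla_{w}\eta_{t}=\nabla_{v}\eta=-B(v)$, hence $B_{t}(w)=B(v)=B(E-tB)^{-1}w=(E-tB)^{-1}B\,w$, since $B$ commutes with any rational function of itself. This is the second formula, and the principal curvature identities follow by evaluating on eigenvectors: if $Be_{i}=\lambda_{i}e_{i}$ then $(E-tB)^{-1}B\,e_{i}=\frac{\lambda_{i}}{1-t\lambda_{i}}e_{i}$, giving the stated expressions for $\lambda_{t}$ and $\mu_{t}$.

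Finally, the embedding statement reduces to invertibility of $d\psi^{t}=E-tB$: its determinant at $x$ equals $(1-t\lambda(x))(1-t\mu(x))$, which is nonzero exactly when $\lambda(x)t\neq 1$ and $\mu(x)t\neq 1$. In that case $\psi^{t}$ is a local diffeomorphism onto its image by the inverse function theorem, hence an embedding on a neighbourhood of $x$. The computation is entirely routine; the only points requiring care are the bookkeeping of the parallel-transport identifications and the sign conventions coming from $B=-\nabla\eta$ and the choice of the future-directed normal, together with the observation that $\eta$ remains normal to every equidistant leaf, which is what legitimises $\eta_{t}=\eta$.
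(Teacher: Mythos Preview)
Your proof is correct and follows essentially the same route as the paper: both reduce to flat coordinates, write $\psi^{t}(x)=x+t\eta(x)$, and compute $d\psi^{t}=E-tB$ to obtain $I_{t}$, from which (2) and (3) are immediate. The only cosmetic difference is that you derive $B_{t}$ by directly differentiating the (constant) normal field along the equidistant leaf, whereas the paper obtains it via the variational identity $\II_{t}=-\tfrac{1}{2}\tfrac{d}{dt}I_{t}$ and then sets $B_{t}=I_{t}^{-1}\II_{t}$; both computations yield $(E-tB)^{-1}B$.
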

\begin{proof} Points (2) and (3) are clear consequences of the formulas in (1). Let us prove (1). It is sufficient to prove it for $M=\R^{2,1}_{\theta}$. Let $\sigma:S \rightarrow M$ be the embedding. The geodesic orthogonal to $S$ at a point $x=\sigma(y)$ can be written as
\[
	\gamma_{x}(t)=\sigma(y)+t\eta(\sigma(y)) \ .
\]
Then,
\begin{align*}
	I_{t}(v,w)&=\langle (d\gamma_{x}(t)(v),d\gamma_{x}(t)(w))\rangle_{2,1} \\
		&=\langle d\sigma_{x}(v)+td(\eta \circ \sigma)_{x}(v), d\sigma_{x}(w)+td(\eta \circ \sigma)_{x}(w) \rangle_{2,1}\\
		&=\sigma_{x}^{*}g_{\R^{2,1}_{\theta}}(v-tBv,w-tBw)\\
		&=I((E-tB)v, (E-tB)w) \ .
\end{align*}
It is well-known that $\II_{t}=-\frac{1}{2}\frac{dI_{t}}{dt}$, hence
\[
	\II_{t}=-\frac{1}{2}\frac{dI_{t}}{dt}=I(B\cdot, (E-tB)\cdot)=I_{t}((E-tB)^{-1}B\cdot,\cdot)
\]
and
\[
	B_{t}=I_{t}^{-1}\II_{t}=(E-tB)^{-1}B \ .
\]
\end{proof}

\begin{remark}The previous formulas hold true also at the singular points, whenever the principal curvatures at those points are well-defined. This is always the case for $H$-surfaces orthogonal to the particles, since in this case the principal curvatures extend continuously at the punctures, where they assume the value $H$ (Proposition \ref{prop:estimatesB}).
\end{remark}

We remark, in particular, that, since with our convention a future-convex space-like surface $S$ has always negative principal curvatures, the 
equidistant surfaces obtained by moving along the normal flow in the future of $S$ are well-defined for every $t$. Moreover, the principal curvatures increase when moving towards the future direction.

\begin{lemma}\label{lm:existenceKsurface}
Let $S_{H}\subset M$ be a future-convex space-like surface, orthogonal to the singular lines. Then the equidistant surface $\psi^{d(H)}(S_{H})$ at an oriented distance $d(H)$ from $S_{H}$ has constant Gauss curvature
\[
	f(H)=-4H^{2}
\]
if
\[
	d(H)=\frac{1}{2H}<0 \ .
\]
\end{lemma}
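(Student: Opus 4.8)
The plan is to compute the Gauss curvature of every equidistant surface $\psi^{t}(S_{H})$ explicitly as a function of $t$ and of the principal curvatures of $S_{H}$, and then to read off the distance at which it becomes constant. Since $M$ is flat, the Gauss equation recalled in Section \ref{models} reads $\det(B_{t})=\Sec(X)-K_{t}=-K_{t}$, so the Gauss curvature of $\psi^{t}(S_{H})$ at the point $\psi^{t}(x)$ equals minus the product of the principal curvatures of the equidistant surface there. These principal curvatures are given by Lemma \ref{lm:principalcurvaturesMink}(2) in terms of the principal curvatures of $S_{H}$ at $x$, so the whole problem reduces to an algebraic identity. The essential input is that $S_{H}$ has constant mean curvature $H$ (the standing hypothesis of this subsection), which means that the \emph{sum} of its principal curvatures is globally constant even though their product is not.

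Concretely, write $\mu,\mu'$ for the principal curvatures of $S_{H}$ at a regular point $x$, so that $\mu+\mu'=2H$ while the product $\mu\mu'$ varies over $S_{H}$. Using Lemma \ref{lm:principalcurvaturesMink}(2) and the Gauss equation, the Gauss curvature of $\psi^{t}(S_{H})$ at $\psi^{t}(x)$ is $K_{t}=-\,\frac{\mu}{1-\mu t}\cdot\frac{\mu'}{1-\mu' t}$, and
\[
\frac{\mu}{1-\mu t}\cdot\frac{\mu'}{1-\mu' t}=\frac{\mu\mu'}{1-(\mu+\mu')t+\mu\mu' t^{2}}=\frac{\mu\mu'}{1-2Ht+\mu\mu' t^{2}} \ .
\]
The right-hand side is independent of the point (that is, of the varying product $\mu\mu'$) precisely when the coefficient $1-2Ht$ vanishes: differentiating the last expression with respect to $P=\mu\mu'$ gives $(1-2Ht)/(1-2Ht+Pt^{2})^{2}$, which is identically zero exactly for $t=\tfrac{1}{2H}$. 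For this value the denominator collapses to $\mu\mu'\,t^{2}$, whence $\frac{\mu}{1-\mu t}\cdot\frac{\mu'}{1-\mu' t}=t^{-2}=4H^{2}$ and therefore $K_{t}=-4H^{2}=f(H)$. Since $H<0$ we have $d(H)=\tfrac{1}{2H}<0$, so the resulting surface indeed lies in the past of $S_{H}$.

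It remains to check that $\psi^{d(H)}$ is a genuine embedding orthogonal to the singular lines, and this is where the only real care is needed. By Lemma \ref{lm:principalcurvaturesMink}(3) it suffices to know that $\mu t\neq 1$ and $\mu' t\neq 1$ for every principal curvature along $S_{H}$ at $t=d(H)$; the a-priori bounds of Lemma \ref{lm:est_princ_curv} give exactly this, since $\mu t=\mu/(2H)\in[\tfrac12,1)$ and $\mu' t=\mu'/(2H)\in(0,\tfrac12]$, so no focal point is met along the flow and $\psi^{d(H)}$ is a local embedding at each point; global injectivity then follows from global hyperbolicity, the flowed surface lying in the past of the Cauchy surface $S_{H}$. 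At the singular points the two principal curvatures of $S_{H}$ both equal $H$ (Proposition \ref{integrability} and Proposition \ref{prop:estimatesB}), so by the remark following Lemma \ref{lm:principalcurvaturesMink} the same formulas apply there: at $t=d(H)$ both principal curvatures of the equidistant surface become $2H$, the Gauss curvature is again $-4H^{2}$, and orthogonality to the singular lines is preserved by the normal flow in the model space $\R^{2,1}_{\theta}$. I expect the main obstacle to be precisely this last verification — guaranteeing that the flow remains embedded all the way out to $t=d(H)$ and behaves correctly at the particles — whereas the curvature computation itself is a short algebraic manipulation once the constant-mean-curvature condition is exploited.
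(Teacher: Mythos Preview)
Your proof is correct and follows essentially the same approach as the paper: both compute the Gauss curvature of the equidistant surface via the Gauss equation $K_{t}=-\det(B_{t})$, reduce to the identity $\det(B_{t})=\det(B)/\det(E-tB)$ (you phrase this in eigenvalues, the paper in the metric form $K_{I_{d(H)}}=4H^{2}K_{I}/\det(2HE-B)$), and invoke Lemma~\ref{lm:est_princ_curv} to ensure the flow is defined up to $t=1/(2H)$. The only cosmetic difference is that you \emph{derive} the value $d(H)=1/(2H)$ by solving for when the curvature becomes independent of $\mu\mu'$, whereas the paper simply plugs in $d(H)$ and checks; your version is slightly more informative but the content is the same.
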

\begin{proof}By Lemma \ref{lm:est_princ_curv} and by Lemma \ref{lm:principalcurvaturesMink}, the foliation by equidistant surfaces in the past of $S_{H}$ is well-defined for $t \in [\frac{1}{2H},0]$, hence it is sufficient to show that $I_{d(H)}$ is a metric with cone singularities of constant curvature $-4H^{2}$.
First of all, $I_{d(H)}$ is a metric with cone singularities of the same cone angles as $I$, because $S_{H}$ is orthogonal to the singular lines and the surface $\psi^{d(H)}(S_{H})$ is obtained by moving along the normal flow.\\
By Lemma \ref{lm:principalcurvaturesMink}, the metric $I_{d(H)}$ can be written as
\[
	I_{d(H)}=I((E-1/(2H) B)\cdot, (E-1/(2H) B) \cdot )
		=\frac{1}{4H^{2}}I((2HE-B)\cdot, (2HE-B)\cdot) \ .
\]\\
Since the operator $B$ is Codazzi for $I$, the Levi-Civita connection for the metric $I_{d(H)}$ is $\nabla^{I_{d(H)}}=(2EH-B)^{-1}\nabla^{I}(2EH-B)$ and the Gauss curvature of $I_{d_{H}}$ is
\[
	K_{I_{d(H)}}=4H^{2}\frac{K_{I}}{\det(2HE-B)}
                      =4H^{2}\frac{-\det(B)}{4H^{2}-2H\trace(B)+\det(B)}
		              =-4H^{2} \ .
\]
\end{proof}

We denote by $g$ the hyperbolic metric on $\Sigma_{\mathpzc{p}}$ with cone singularities of angle $\bm{\theta}$ in the conformal class of the induced metric on the surface $\Sigma_{K}$ with constant Gauss curvature $K=-4H^{2}$ discovered above. We want to study the relations 
among the above metric $g$, the induced metric $I$ on $S_{H}$ and their third fundamental form $\III$ (which is the same for $S_{H}$ and $\Sigma_{K}$, since they are equidistant).\\
\\
\indent We start with a probably well-known fact:
\begin{lemma}\label{lm:Gaussharmonic}
Let $I$ and $\III$ be the first and third fundamental form on $S_H$ as above. The identity
\[
	G_{H}: (\Sigma_{\mathpzc{p}},I) \rightarrow (\Sigma_{\mathpzc{p}},\III)
\]
is harmonic with Hopf differential
\[
     \Hopf(G_{H})=HIB_{0}+iJ_IHIB_{0} \ ,
\]
where $B_{0}$ is the traceless part of the shape operator $B$ of $S_{H}$, and $J_I$ is the complex structure compatible with $I$.
\end{lemma}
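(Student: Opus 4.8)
The plan is to recognise $\Hopf(G_H)$ as the $(2,0)$-part of the pull-back metric $G_H^{*}\III=\III$ with respect to the complex structure $J_I$ determined by $I$, and then to conclude harmonicity from the holomorphicity of this $(2,0)$-part. Recall that for a map $f$ between Riemann surfaces, written in local conformal coordinates $z$ for the domain and $w$ for the target metric $\rho^{2}|dw|^{2}$, the Hopf differential is $\Phi_f=\rho^{2}w_z\overline{w_{\bar z}}\,dz^{2}$ and a direct computation relates $\partial_{\bar z}\Phi_f$ to the tension field $\tau(f)$. The key point is that for an \emph{orientation-preserving} diffeomorphism one has $|w_z|>|w_{\bar z}|$, and this forces the vanishing of $\partial_{\bar z}\Phi_f$ to imply $\tau(f)=0$. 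Since $G_H=\mathrm{id}$ is orientation preserving, it will suffice to prove that $\Hopf(G_H)$ is holomorphic.

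First I would compute $\III$ in terms of $I$ and the traceless part $B_0$ of the shape operator. Writing $B=HE+B_0$ with $H$ constant and using that $B_0$ is traceless and $I$-self-adjoint, so that $B_0^{2}=\lambda^{2}E$ where $\lambda$ is its positive eigenvalue, one obtains
\[
	\III(u,v)=I(Bu,Bv)=I(B^{2}u,v)=(H^{2}+\lambda^{2})\,I(u,v)+2H\,I(B_0 u,v) \ .
\]
The summand $(H^{2}+\lambda^{2})I$ is conformal to $I$, hence of type $(1,1)$, and contributes nothing to the Hopf differential; the remaining summand $2H\,IB_0$ is traceless, and its $(2,0)$-part is exactly $HIB_0+iJ_IHIB_0$ once the sign convention for $J_I$ in the statement is fixed. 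This already yields the claimed formula for $\Hopf(G_H)$.

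It then remains to show that this $(2,0)$-part is holomorphic. Since $H$ is constant, $B_0=B-HE$ is Codazzi (because $B$ is Codazzi by the Codazzi equation and $E$ is trivially Codazzi), and it is integrable by Proposition \ref{integrability}. Hence, by the correspondence of Proposition \ref{prop:estimatesB}, there is a meromorphic quadratic differential $q$ with at most simple poles at the marked points such that $IB_0=\Re(q)$, and one checks that $\Hopf(G_H)=Hq$, which is holomorphic on $\Sigma_{\mathpzc{p}}$. Combined with the tension-field relation and the orientation-preserving property of $G_H$, this shows $\tau(G_H)=0$, i.e.\ $G_H$ is harmonic.

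The delicate point is to make this rigorous across the cone locus rather than only on $\Sigma_{\mathpzc{p}}$. Here I would rely on the regularity already recorded in Proposition \ref{prop:estimatesB}: $B_0$ extends continuously by $0$ at the punctures and $q=H^{-1}\Hopf(G_H)$ has at most simple poles, so the Hopf differential has exactly the mild singular behaviour compatible with the theory of harmonic maps between surfaces with cone singularities of \cite{GR}. One should verify that $G_H$ belongs to the admissible class of maps there (so that the tension-field identity and its consequence remain valid through the singular points), which follows from the orthogonality of $S_H$ to the singular lines and the fact that $I$ and $\III$ carry cone singularities of the same angles $\bm{\theta}$.
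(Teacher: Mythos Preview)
Your proof is correct and follows essentially the same route as the paper: compute $\III=I(\cdot,B^{2}\cdot)$, decompose $B=B_{0}+HE$ to find that the traceless part of $B^{2}$ is $2HB_{0}$, invoke Propositions \ref{prop:estimatesB} and \ref{integrability} to see that $IB_{0}$ is the real part of a meromorphic quadratic differential with at most simple poles, and conclude that the Hopf differential of $G_{H}$ is holomorphic. The only difference is that you make explicit the step ``holomorphic Hopf differential $\Rightarrow$ harmonic'' via the tension-field identity and the orientation-preserving property of $G_{H}$, whereas the paper simply states this implication; your added remarks on the cone locus are likewise more cautious than the paper's own treatment.
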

\begin{proof}By definition of the third fundamental form, we have
\[
	G_{H}^{*}\III=I(B\cdot, B\cdot)=I(\cdot, B^{2}) \ .
\]
If we decompose $B=B_{0}+HE$ we obtain that the traceless part of $B^{2}$ is $2HB_{0}$. Since it is Codazzi and integrable, $2HIB_{0}$ is twice the real part of a meromorphic quadratic differential $\Phi_{G_{H}}$ on $(\Sigma, J_{I})$ with at most simple poles at the singularities (see Proposition \ref{prop:estimatesB}), where $J_I$ is the complex structure compatible with $I$. Then we can write
\[
	G_{H}^{*}\III=e(G_{H})I+\Phi_{G_{H}}+\overline{\Phi}_{G_{H}} \ .
\]
Since $\Phi_{G_{H}}$ is holomorphic on $(\Sigma_{\mathpzc{p}}, J_{I})$, $G_{H}$ is harmonic and $\Phi_{G_{H}}$ coincides with the Hopf differential of the map $G_{H}$. By computation, the Hopf differential $\Phi_{G_{H}}$ is $HIB_{0}+iJ_IHIB_{0}$.
\end{proof}

A similar result is obtained when considering the projection along the normal flow from the $S_{H}$ to the equidistant surface $\Sigma_{K}$ with $K=-4H^2$.

\begin{lemma}\label{lm:projharmonic}
Let $g$ be the hyperbolic metric with cone singularities of angle $\bm{\theta}$ in the conformal class of the induced metric on the surface $\Sigma_{K}$, which is at oriented distant $d(H)$ from $S_H$ of constant curvature $K=-4H^2$. The map induced by the projection $\psi_{H}$
\[
	\psi_{H}:(\Sigma_{\mathpzc{p}}, I) \rightarrow (\Sigma_{\mathpzc{p}},g)
\]
is harmonic with Hopf differential
\[
	\Hopf(\psi_{H})=-HIB_{0}-iJ_IHIB_{0} \ ,
\]
where, again, $B_{0}$ is the traceless part of the shape operator $B$ of $S_{H}$ and $J_I$ is the complex structure compatible with $I$.
\end{lemma}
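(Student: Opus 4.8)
## Proof Proposal for Lemma~\ref{lm:projharmonic}

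The plan is to exploit the same mechanism as in Lemma~\ref{lm:Gaussharmonic}, now applied to the composition of the normal-flow projection $\psi_H$ with the equidistant foliation data computed in Lemma~\ref{lm:principalcurvaturesMink}. The key point is that the induced metric $I_{d(H)}$ on $\Sigma_K$ is, by Lemma~\ref{lm:existenceKsurface}, a constant-curvature metric $K=-4H^2$, so that after rescaling it becomes a hyperbolic metric with cone singularities, namely $g = -4H^2 \, I_{d(H)}$ (so that $g$ has curvature $-1$ and lies in the same conformal class as $I_{d(H)}$). First I would pull back $g$ under $\psi_H$ and express $\psi_H^*g$ purely in terms of the embedding data $I$ and $B$ of $S_H$, using the explicit formula $I_{d(H)} = \tfrac{1}{4H^2}I((2HE-B)\cdot,(2HE-B)\cdot)$ from the proof of Lemma~\ref{lm:existenceKsurface}. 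This yields
\[
	\psi_H^*g = -4H^2 \, I_{d(H)} = -I((2HE-B)\cdot,(2HE-B)\cdot) \ .
\]

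Next I would expand $(2HE-B)^2$ by writing $B = B_0 + HE$, so that $2HE - B = HE - B_0$ and hence $(2HE-B)^2 = H^2E - 2HB_0 + B_0^2$. The traceless part of this operator is exactly $-2HB_0$, since $B_0^2$ is a multiple of the identity (it equals $\lambda^2 E$, $B_0$ being traceless). Therefore
\[
	\psi_H^*g = e(\psi_H)\,I - (-2H)\,IB_0 + \text{(its conjugate)} \ ,
\]
which, compared with the harmonic-map decomposition $\psi_H^*g = e(\psi_H)I + \Phi_{\psi_H} + \overline{\Phi}_{\psi_H}$, identifies the Hopf differential. By Proposition~\ref{prop:estimatesB}, $-2H\,IB_0$ is twice the real part of a meromorphic quadratic differential with at most simple poles on $(\Sigma_{\mathpzc{p}}, J_I)$; holomorphicity of this differential is equivalent to $\psi_H$ being harmonic, exactly as in Lemma~\ref{lm:Gaussharmonic}. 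A short computation then shows $\Hopf(\psi_H) = -HIB_0 - iJ_I HIB_0$, with the sign flipped relative to $\Hopf(G_H)$ precisely because the relevant traceless operator is $-2HB_0$ rather than $+2HB_0$.

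The main obstacle I anticipate is the bookkeeping near the singular points: I must verify that the harmonic-map machinery and the identification of the Hopf differential remain valid at the punctures, i.e.\ that $\psi_H$ genuinely extends to a harmonic map between the cone surfaces and that the quadratic differential has the claimed pole order. This is exactly where Proposition~\ref{prop:estimatesB} does the work, since $B_0$ extends continuously to $0$ at the singular lines (by the orthogonality hypothesis and Proposition~\ref{integrability}), guaranteeing both the integrability of $IB_0$ and the simple-pole bound on the associated differential. Apart from this regularity check, the argument is a direct transcription of Lemma~\ref{lm:Gaussharmonic}, the only genuine difference being the conformal rescaling by $-4H^2$ that turns the curvature-$(-4H^2)$ metric $I_{d(H)}$ into the hyperbolic metric $g$, which is responsible for the opposite sign in the final formula.
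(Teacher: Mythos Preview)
Your approach is essentially identical to the paper's: pull back $g$, expand $(2HE-B)^2$, extract the traceless part $-2HB_0$, and invoke the correspondence between traceless Codazzi operators and meromorphic quadratic differentials exactly as in Lemma~\ref{lm:Gaussharmonic}. The one slip is the rescaling factor: since $I_{d(H)}$ has curvature $-4H^2$, the hyperbolic metric in its conformal class is $g = 4H^{2}\,I_{d(H)}$ (a positive multiple, so that the curvature becomes $-1$), not $-4H^{2}\,I_{d(H)}$; with that correction one gets $\psi_H^*g = I((2HE-B)\cdot,(2HE-B)\cdot)$ without the spurious minus sign, and your computation then matches the paper verbatim and yields the stated Hopf differential.
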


\begin{proof}By definition of the metric $g$ we have
\[
	\psi_{H}^{*}g=I((2HE-B)\cdot, (2HE-B)\cdot)=I(\cdot, (2HE-B)^{2}) \ .
\]
If we decompose $B=B_{0}+HE$, we get that the traceless part of $(2HE-B)^{2}$ is $-2HB_{0}$. 
Since it is Codazzi and integrable, $-2HIB_{0}$ is twice the real part of a meromorphic quadratic differential $\Phi_{\psi_{H}}$ on $(\Sigma, J_{I})$ with at most simple poles at the singularities (see Proposition \ref{prop:estimatesB}), where $J_I$ is the complex structure compatible with $I$. Then we can write
\[
	\psi_{H}^{*}g=e(\psi_{H})I+\Phi_{\psi_{H}}+\overline{\Phi}_{\psi_{H}} \ .
\]
Since $\Phi_{\psi_{H}}$ is holomorphic on $(\Sigma_{\mathpzc{p}}, J_{I})$, $\psi_{H}$ is harmonic and $\Phi_{\psi_{H}}$ coincides with the Hopf differential of the map $\psi_{H}$. By computation, the Hopf differential $\Phi_{\psi_{H}}$ is $-HIB_{0}-iJ_IHIB_{0}$.
\end{proof}

This implies that the map $\psi_{H} \circ G_{H}^{-1}$ is minimal Lagrangian (see Definition \ref{def: minmal Lagrangian} below) and the conformal class of $I$ is the center.

\begin{defi} \label{def: minmal Lagrangian}
An area-preserving diffeomorphism $m:(\Sigma_{\mathpzc{p}}, h) \rightarrow (\Sigma_{\mathpzc{p}},h')$ between hyperbolic surfaces with cone singularities is \emph{minimal Lagrangian} if there exists a conformal class $c \in \Tp$, called the \emph{center}, 
such that the two harmonic maps (note that they exist uniquely by \cite[Theorem 2]{GR}) isotopic to the identity
\[
	f:(\Sigma_{\mathpzc{p}},c)\rightarrow (\Sigma_{\mathpzc{p}},h) \ \ \ \ \ \text{and} \ \ \ \ \
	f':(\Sigma_{\mathpzc{p}},c) \rightarrow (\Sigma_{\mathpzc{p}},h')
\]
have opposite Hopf differentials and
\[
	m=f'\circ f^{-1} \ .
\]
\end{defi}

Minimal Lagrangian maps between hyperbolic surfaces with cone singularities (of fixed angles leas than $\pi$) have an equivalent description in terms of morphisms between tangent bundles (see e.g. \cite[Proposition 2.12]{QiyuAdS}).

\begin{lemma}\label{lm:minimal Lagrangian}
Let $h$, $h'$ be two hyperbolic metrics with cone singularities of angle $\bm{\theta}$. Then $m:(\Sigma_{\mathpzc{p}},h)\rightarrow(\Sigma_{\mathpzc{p}},h')$ is a minimal Lagrangian map if and only if there exists a bundle morphism $b: T\Sigma_{\mathpzc{p}}\rightarrow T\Sigma_{\mathpzc{p}}$ such that
$b$ is self-adjoint for $h$ with positive eigenvalues, $\det(b)=1$, $d^{\nabla}b=0$, $m^{*}h'=h(b\cdot,b\cdot)$ and both eigenvalues of $b$ tend to 1 at the cone singularities.
\end{lemma}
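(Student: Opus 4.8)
The plan is to prove both implications by the same explicit operator algebra, reducing in each case the minimal Lagrangian condition to a Codazzi condition that is controlled near the cone points by Proposition \ref{prop:estimatesB}. Throughout I write $E$ for the identity operator and I use repeatedly the transport principle behind Lemma \ref{lm:existenceKsurface}: if $s$ is self-adjoint, invertible and Codazzi for a metric $k$, then the Levi-Civita connection of $k(s\cdot,s\cdot)$ is $s^{-1}\nabla^{k}s$, and a one-line computation with this formula shows that $s^{-1}$ is in turn self-adjoint and Codazzi for $k(s\cdot,s\cdot)$. The key algebraic observation is that if $b$ is $h$-self-adjoint with positive eigenvalues and $\det b=1$, and one sets $A:=(E-b)(E+b)^{-1}$, then $A$ is $h$-self-adjoint and \emph{traceless} (its eigenvalues are $\mp\tanh(\omega/2)$ when those of $b$ are $e^{\pm\omega}$), it satisfies $(E+A)b=E-A$, and moreover $E+A=2(E+b)^{-1}$.

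For the implication ($\Leftarrow$), assume $b$ and $m$ with $m^{*}h'=h(b\cdot,b\cdot)$ are given. I would take as center $c:=[h((E+b)\cdot,(E+b)\cdot)]$, with representative $\hat I:=\tfrac14 h((E+b)\cdot,(E+b)\cdot)$, so that $E+A=2(E+b)^{-1}$ yields $\hat I=h((E+A)^{-1}\cdot,(E+A)^{-1}\cdot)$. A direct computation then shows that the operator of $h$ with respect to $\hat I$ is $(E+A)^{2}$, while that of $m^{*}h'=h(b\cdot,b\cdot)$ is $(E+A)^{2}b^{2}=(E-A)^{2}$; their traceless parts are $+2A$ and $-2A$. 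Since a map out of a surface is harmonic exactly when the $(2,0)$-part of the pulled-back metric is holomorphic, to conclude that $\mathrm{id}\colon(\Sigma_{\mathpzc{p}},\hat I)\to(\Sigma_{\mathpzc{p}},h)$ and $m\colon(\Sigma_{\mathpzc{p}},\hat I)\to(\Sigma_{\mathpzc{p}},h')$ are harmonic with opposite Hopf differentials it suffices to check that $A$ is Codazzi for $\hat I$. As $E+b$ is $h$-self-adjoint, invertible and Codazzi, the transport principle gives that $(E+b)^{-1}$ is Codazzi for $\hat I$, hence so is $A=2(E+b)^{-1}-E$ (the operator $E$ being Codazzi for any torsion-free connection). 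Finally $\det b=1$ forces $m$ to be area-preserving, so $m$ is minimal Lagrangian with center $c$.

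For ($\Rightarrow$), given $m$ minimal Lagrangian I would define $b:=(h^{-1}m^{*}h')^{1/2}$, the positive square root, which is automatically $h$-self-adjoint with positive eigenvalues and satisfies $m^{*}h'=h(b\cdot,b\cdot)$; the area-preserving hypothesis gives $\det(b^{2})=1$, hence $\det b=1$. The real content is that $b$ is Codazzi. Here I would use the harmonic maps $f,f'$ from the center $c$ with opposite Hopf differentials $\pm\Phi$: running the computation of the previous paragraph backwards identifies the traceless operator $A=(E-b)(E+b)^{-1}$ with the self-adjoint operator representing $\Re(\Phi)$ in the center metric, so that holomorphicity of $\Phi$ (equivalently, harmonicity of $f$ and $f'$) is precisely the Codazzi condition for $A$ with respect to the center. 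Inverting the transport principle then converts this into the Codazzi condition for $b=(E-A)(E+A)^{-1}$ with respect to $h$.

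The delicate points are all concentrated at the cone singularities, and this is where I expect the real work to lie. First, the existence and uniqueness of harmonic maps isotopic to the identity between hyperbolic surfaces with cone angles in $(0,\pi)$ is what makes the center and the maps $f,f'$ well-defined; this is exactly \cite[Theorem 2]{GR}. Second, one must ensure that $A$, being traceless and Codazzi, is genuinely the real part of a meromorphic quadratic differential with at most simple poles: by Proposition \ref{prop:estimatesB} this amounts to the integrability of $A$, which in ($\Leftarrow$) follows from the hypothesis that the eigenvalues of $b$ tend to $1$ (so that those of $A$ tend to $0$), while in ($\Rightarrow$) the fact that Hopf differentials of harmonic maps between such surfaces have at most simple poles is what forces the eigenvalues of $b$ to tend to $1$. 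A last bookkeeping point is that in ($\Leftarrow$) one needs $m$ itself to be isotopic to the identity for $f'=m$ to qualify as the harmonic map isotopic to the identity of Definition \ref{def: minmal Lagrangian}, which holds in our setting; in any case this equivalence is recorded, for the AdS version, in \cite[Proposition 2.12]{QiyuAdS}.
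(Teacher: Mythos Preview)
The paper does not give its own proof of this lemma: it is stated with a reference to \cite[Proposition 2.12]{QiyuAdS}. Your argument is therefore not a comparison target but an independent proof, and it is essentially correct. The operator $A=(E-b)(E+b)^{-1}$ is exactly the right device, and the transport principle you invoke (used in the paper in the proofs of Lemma~\ref{lm:existenceKsurface} and Proposition~\ref{prop:constructionsurfaces}) cleanly converts the Codazzi condition between $b$ on $(\Sigma_{\mathpzc{p}},h)$ and $A$ on $(\Sigma_{\mathpzc{p}},\hat I)$.

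The one step that deserves to be made explicit is in ($\Rightarrow$): when you say ``running the computation of the previous paragraph backwards identifies $A$ with the operator representing $\Re(\Phi)$ in the center metric'', you are implicitly using that the center $c$ supplied by Definition~\ref{def: minmal Lagrangian} coincides with your $[\hat I]$. This is true but needs a line: after reducing to $f=\mathrm{id}$ (by pulling the center back along $f^{-1}$), the opposite Hopf differentials force the traceless parts of $h$ and $m^{*}h'$ with respect to $c$ to cancel, so $h+m^{*}h'=h(\cdot,(E+b^{2})\cdot)$ lies in the class $c$; a short eigenvalue check shows $E+b^{2}$ and $(E+b)^{2}$ are proportional (both have eigenvalues summing and multiplying compatibly since $\det b=1$), hence $c=[\hat I]$. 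With that in hand your identification $A=\hat I^{-1}\Re(\Phi)$ is justified, holomorphicity of $\Phi$ gives $d^{\nabla^{\hat I}}A=0$, and the transport principle yields $d^{\nabla^{h}}b=0$ as you claim. The treatment of the cone points via Proposition~\ref{prop:estimatesB} and the simple-pole behaviour of Hopf differentials of harmonic maps from \cite{GR} is correct.
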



\subsection{A new parameterisation of $\mathcal{G}\mathcal{H}^{0}_{\bm{\theta}}(\Sigma_{\mathpzc{p}})$}\label{sec:newparameterisationMink}
Inspired by the constructions in \cite{Schlenker-Krasnov}, for every $H \in (-\infty, 0)$,  we consider the map
\[
	\Phi_{H}:T\Tp \rightarrow \mathcal{G}\mathcal{H}^{0}_{\bm{\theta}}(\Sigma_{\mathpzc{p}})
\]
that associates to a point $(h, q) \in T\Tp$ the CGHM Minkowski $3$-manifold with particles that contains a convex $H$-surface, whose induced metric $I$ is conformal to $h$ and whose second fundamental form is $\II=\Re(q)+HI$. The main result of this subsection is the following:

\begin{teo}\label{thm:newparameterisation}The map $\Phi_{H}$ is well-defined and is a homeomorphism for every $H \in (-\infty, 0)$.
\end{teo}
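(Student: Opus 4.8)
**The plan is to prove that $\Phi_H$ is a homeomorphism by establishing that it is well-defined, continuous, injective, and proper (or surjective with a continuous inverse).**

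The first and most substantial task is to show $\Phi_H$ is \emph{well-defined}. Given $(h,q) \in T\Tp$, I need to produce an $H$-surface whose induced metric $I = e^{2u}h$ is conformal to $h$ and whose second fundamental form is $\II = \Re(q) + HI$. The unknown is the conformal factor $u$. The shape operator is $B = I^{-1}\II = e^{-2u}h^{-1}\Re(q) + HE = e^{-2u}b_q + HE$, where $b_q$ is the traceless Codazzi operator from Proposition \ref{prop:estimatesB}. Since $B_0 = e^{-2u}b_q$ is traceless and (because $H$ is constant) Codazzi, the Codazzi equation $d^{\nabla^I}B = 0$ is automatically satisfied. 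The only remaining constraint is the Gauss equation: in Minkowski space $\Sec(X) = 0$, so I need $\det(B) = -K_I$. Writing $\det(B) = H^2 - \|B_0\|^2 = H^2 - e^{-4u}\|b_q\|_h^2$ and expressing $K_I$ in terms of the curvature of $h$ via the standard conformal change formula $K_I = e^{-2u}(K_h - \Delta_h u) = e^{-2u}(-1 - \Delta_h u)$, the Gauss equation becomes a quasi-linear elliptic PDE for $u$ of the form
\[
	\Delta_h u = -1 + e^{2u}H^2 - e^{-2u}\|b_q\|_h^2 \ .
\]
I would solve this PDE by the method of sub- and super-solutions, exploiting the monotonicity of the right-hand side in $u$ (the term $e^{2u}H^2$ increases and $-e^{-2u}\|b_q\|^2$ increases, guaranteeing uniqueness), and handling the behaviour near the cone points using the weighted Hölder space framework referenced in the paper and the fact that $\|b_q\|_h \to 0$ at the punctures (Proposition \ref{prop:estimatesB}). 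This is \textbf{the main obstacle}: one must construct explicit barriers adapted to the singular metric and verify the required regularity at the cone points so that the resulting surface is genuinely orthogonal to the singular lines with cone angles $\theta_i$. Once $u$ exists and is unique, the embedding data $(I, \II)$ satisfy Gauss--Codazzi, and by the fundamental theorem of surface theory (in the singular setting) they determine an $H$-surface in a CGHM Minkowski manifold with particles, giving a point of $\mathcal{G}\mathcal{H}^0_{\bm\theta}(\Sigma_{\mathpzc{p}})$; uniqueness of $u$ makes this assignment well-defined.

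Next I would address \emph{injectivity}. Suppose $\Phi_H(h,q) = \Phi_H(h',q')$, so the two constructed $H$-surfaces lie in isometric Minkowski manifolds with particles. The key point is that Lemma \ref{lm:est_princ_curv} shows every such $H$-surface is future-convex with principal curvatures in the controlled range $(2H, H] \times [H, 0)$, and the a priori bounds guarantee uniqueness of the $H$-surface in a given manifold: any two $H$-surfaces can be compared via the normal flow (Lemma \ref{lm:principalcurvaturesMink}), and a maximum-principle argument forces them to coincide. Given that the $H$-surface is unique in its manifold, the induced metric $I$ and second fundamental form $\II$ are determined up to the isotopy action, which recovers the conformal class $h$ and (via $\II - HI = \Re(q)$) the quadratic differential $q$. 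Hence $(h,q) = (h',q')$.

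Finally, for \emph{surjectivity} and \emph{continuity of the inverse}, I would use Theorem \ref{thm:parameterisationMink} together with the equidistant-surface constructions of this subsection. Given any $M \in \mathcal{G}\mathcal{H}^0_{\bm\theta}(\Sigma_{\mathpzc{p}})$, I must exhibit an $H$-surface inside it. Here I would invoke the relation between $H$-surfaces and $K$-surfaces: Lemma \ref{lm:existenceKsurface} shows that the $K$-surface at $K = -4H^2$ (which exists by the companion results on $K$-surface foliations) is equidistant to an $H$-surface at oriented distance $1/(2H)$, and Lemmas \ref{lm:Gaussharmonic}--\ref{lm:minimal Lagrangian} identify the data of this $H$-surface with a point of $T\Tp$ through the minimal Lagrangian/harmonic-map correspondence. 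This produces a candidate preimage and simultaneously describes the inverse map explicitly in terms of the Bonsante--Seppi parametrisation of Theorem \ref{thm:parameterisationMink}. Continuity of $\Phi_H$ follows from continuous dependence of the solution $u$ of the elliptic PDE on the data $(h,q)$ (standard elliptic estimates in the weighted spaces), and continuity of the inverse follows from the continuity of the $K$-surface foliation and the harmonic-map data. Combining these four properties—well-definedness, continuity, bijectivity, and continuity of the inverse—yields that $\Phi_H$ is a homeomorphism for every $H \in (-\infty,0)$.
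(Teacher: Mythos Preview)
Your outline for well-definedness and injectivity is sound and close in spirit to the paper. For well-definedness you propose sub- and super-solutions, whereas the paper instead sets up a strictly convex variational functional $F_H$ whose unique minimiser is the solution $u$; both routes are viable, though the variational approach sidesteps the explicit construction of barriers near the cone points and yields uniqueness for free. Your injectivity argument via the normal flow and the Maximum Principle is exactly what the paper does.

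The genuine gap is in your surjectivity step. You propose to produce an $H$-surface in a given $M$ by first taking the $K$-surface with $K=-4H^2$, ``which exists by the companion results on $K$-surface foliations'', and then pushing it along the normal flow. In the Minkowski setting this is circular: the existence of $K$-surfaces in CGHM Minkowski manifolds with particles is Theorem~\ref{thm:KfoliationMink}, which the paper proves \emph{after} and \emph{as a consequence of} Theorem~\ref{thm:newparameterisation} (see the two corollaries opening the subsection on foliations). Lemma~\ref{lm:existenceKsurface} also goes the wrong direction for your purposes---it produces a $K$-surface from an $H$-surface, not the reverse. So you cannot assume a $K$-surface exists at this stage of the argument.

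The paper avoids this circularity by replacing surjectivity with \emph{properness} and invoking Invariance of Domain. The properness proof is the most delicate part: given a convergent sequence $M_n=\Phi_H(h_n,q_n)$, one uses the Bonsante--Seppi data $(\widetilde{h}_n,\widetilde{q}_n)$ from Theorem~\ref{thm:parameterisationMink} to construct, by hand, auxiliary convex surfaces $S_n\subset M_n$ with Gauss curvature uniformly squeezed in $(-4H^2,0)$ (Proposition~\ref{prop:constructionsurfaces}). A Maximum-Principle comparison (Lemma~\ref{lm:mutualposition}) then forces the $K$-surfaces $(\Sigma_{f(H)})_n$ to lie in the past of the $S_n$, and monotonicity of induced metrics along the normal flow bounds the hyperbolic metrics $g_n$ in a compact set of $\Tp$. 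Finally the harmonic-map lemmas \ref{lm:Gaussharmonic}--\ref{lm:projharmonic} transfer this compactness back to $(h_n,q_n)$. This chain of constructions is the missing idea in your outline; without it, or an independent existence proof for $K$-surfaces, the argument does not close.
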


Let us verify first that the map is well-defined. This means that, given a point $(h,q) \in T\Tp$, we need to prove that it is always possible to find a smooth function $u: \Sigma_{\mathpzc{p}}\to \R$ which extends continuously at the punctures such that the couple $(I=e^{2u}h, B=I^{-1}\Re(q)+HE)$ satisfies the Gauss-Codazzi equations for surfaces embedded in Minkowski manifolds. Since $H$ is constant, $B$ is always a Codazzi operator for the Levi-Civita connection of $I$. Therefore, we only need to find $u$ such that the Gauss equation is satisfied. A standard computation shows that
\[
	K_{I}=e^{-2u}(\Delta u+K_{h}) \ ,
\]
thus $u$ is a solution of the elliptic partial differential equation
\begin{equation}\label{eq:PDE}
\begin{split}
	e^{-2u}(\Delta u+K_{h})&=K_{I}=-\det(B)    =-H^{2}-\det(B_{0})\\
	&=-H^{2}-e^{-4u}\det(h^{-1}\Re(q)) \ ,
\end{split}
\end{equation}
where we have denoted by $B_{0}=I^{-1}\Re(q)$ the traceless part of $B$. Notice that, since the function $u$ is only continuous in a neighbourhood of the punctures $\mathpzc{p}$, the above equation must be thought of in the weak sense. \\
In the case where $q=0$, Equation (\ref{eq:PDE}) reduces to
\[
	e^{-2u}(\Delta u +K_{h})=-H^{2} \ ,
\]
which is the standard equation for a metric in the conformal class of $h$ with constant curvature $-H^{2}$, hence it is well-known (see \cite{Tro}) that it has a unique solution. \\
In order to deal with the general case, let us introduce the functional:
\begin{align*}
	F_{H}:L^{2}(\Sigma_{\mathpzc{p}},h) &\rightarrow \R \\
u &\mapsto \frac{1}{2}\int_{\Sigma_{\mathpzc{p}}} (\| \nabla u\|^{2}_{h}+H^{2}e^{2u}-e^{-2u}\det(h^{-1}\Re(q))+2K_{h}u) \ dA_{h} \ .
\end{align*}
A standard computation shows that $u$ is a critical point of the functional $F_{H}$ if and only if $u$ is a solution of Equation (\ref{eq:PDE}). Since $F_{H}$ is the sum of four convex functionals (the third term is convex because $\det(h^{-1}\Re(q))\leq0$ since it is traceless) and the first term is strictly convex, then $F_{H}$ is strictly convex, hence it has at most one critical point, which must be a minimum. The existence of the solution of Equation (\ref{eq:PDE}) is a consequence of the following technical lemma:

\begin{lemma}[Proposition 3.7 \cite{Schlenker-Krasnov}]\label{lm:technicalestimate} If $K_{h} \in (-H^{2},0)$ and $q$ is not indentically zero, then there exists a constant $\epsilon>0$, such that
\[
F_{H}(u)\geq \epsilon \| u\|^2_{L^{2}} \ \ \ \ \ \forall \ u \in L^{2}(\Sigma_{\mathpzc{p}},h) \ .
\]
\end{lemma}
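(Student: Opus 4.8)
The plan is to prove the coercivity estimate by the direct method, controlling separately the mean value and the oscillation of $u$. I would write $\bar{u}=\frac{1}{A}\int_{\Sigma_{\mathpzc{p}}}u\,dA_{h}$ for the average (with $A$ the total area) and $v=u-\bar{u}$, so that $\int_{\Sigma_{\mathpzc{p}}}v\,dA_{h}=0$ and $\|u\|_{L^{2}}^{2}=\|v\|_{L^{2}}^{2}+A\bar{u}^{2}$. Since $\nabla u=\nabla v$, the Dirichlet term of $F_{H}$ controls the oscillation through the Poincaré inequality of $(\Sigma_{\mathpzc{p}},h)$, namely $\int_{\Sigma_{\mathpzc{p}}}\|\nabla u\|_{h}^{2}\,dA_{h}\geq C_{P}\|v\|_{L^{2}}^{2}$. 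It then remains to produce a quadratic lower bound in $\bar{u}$, and here the two exponential terms play complementary roles: $H^{2}e^{2u}$ confines $\bar{u}\to+\infty$, while $-e^{-2u}\det(h^{-1}\Re(q))=e^{-2u}\,w$, where $w:=-\det(h^{-1}\Re(q))\geq 0$ (the operator being traceless), confines $\bar{u}\to-\infty$. The hypothesis $q\not\equiv 0$ is used precisely here: it guarantees $W:=\int_{\Sigma_{\mathpzc{p}}}w\,dA_{h}>0$, which is what makes the second confinement effective.

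The quantitative estimate on $\bar{u}$ comes from Jensen's inequality. Since $\bar{v}=0$, convexity of the exponential gives $\int e^{2u}\,dA_{h}=e^{2\bar{u}}\int e^{2v}\,dA_{h}\geq A\,e^{2\bar{u}}$, while applying Jensen to the probability measure $w\,dA_{h}/W$ yields $\int e^{-2u}w\,dA_{h}\geq W\,e^{-2\bar{u}}e^{-2\langle v\rangle_{w}}$, where $\langle v\rangle_{w}=\frac{1}{W}\int v\,w\,dA_{h}$. I would then split according to the sign of $\bar{u}$. For $\bar{u}\geq 0$ I use $e^{2\bar{u}}\geq 1+2\bar{u}+2\bar{u}^{2}$ together with $H^{2}+K_{h}>0$ to absorb the linear contribution $2K_{h}A\bar{u}$ (which has the unfavourable sign) into $H^{2}A\,e^{2\bar{u}}$, leaving a term bounded below by a positive multiple of $\bar{u}^{2}$; the nonnegative $e^{-2u}w$ contribution is simply discarded. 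For $\bar{u}\leq 0$ the linear term $2K_{h}A\bar{u}\geq 0$ is now favourable (using $K_{h}<0$), and $e^{-2\bar{u}}\geq 1-2\bar{u}+2\bar{u}^{2}$ extracts the quadratic control of $\bar{u}$ from the $e^{-2u}w$ term.

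The main obstacle is the coupling factor $e^{-2\langle v\rangle_{w}}$ in the case $\bar{u}\leq 0$: a large oscillation could in principle cancel the confining exponential. I would close this by the bound $|\langle v\rangle_{w}|\leq W^{-1}\|w\|_{L^{2}}\|v\|_{L^{2}}=:C\|v\|_{L^{2}}$ and a second dichotomy. If $\|v\|_{L^{2}}\leq |\bar{u}|/(2C)$ then $e^{-2\bar{u}-2\langle v\rangle_{w}}\geq e^{|\bar{u}|}$, which dominates $\bar{u}^{2}$; if instead $\|v\|_{L^{2}}>|\bar{u}|/(2C)$ then the Poincaré term $C_{P}\|v\|_{L^{2}}^{2}$ already dominates a multiple of $\bar{u}^{2}$. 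In either regime, combining with the Poincaré control of $\|v\|_{L^{2}}$ gives $F_{H}(u)\geq \epsilon(\|v\|_{L^{2}}^{2}+\bar{u}^{2})\geq \epsilon'\|u\|_{L^{2}}^{2}$ for $\epsilon$ small enough, which is the claim. Finally, a variable $K_{h}\in(-H^{2},0)$ only introduces the extra term $\int K_{h}v\,dA_{h}$, which is bounded by $\|K_{h}\|_{L^{\infty}}\|v\|_{L^{1}}$ and absorbed into $C_{P}\|v\|_{L^{2}}^{2}$ by Young's inequality, so it does not affect the argument.
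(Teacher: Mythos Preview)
The paper does not reproduce a proof of this lemma: it is quoted from \cite{Schlenker-Krasnov}, Proposition~3.7, and used as a black box to show that a minimizing sequence for $F_{H}$ stays bounded in $L^{2}$. So there is no proof in the paper to compare against beyond the citation.

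Your direct argument via the mean--oscillation decomposition $u=\bar u+v$, the Poincar\'e inequality on $v$, and Jensen on the two exponential terms is the natural route and is essentially correct. Two small points are worth flagging. First, the constant $\|w\|_{L^{2}}$ you invoke is indeed finite: by Proposition~\ref{prop:estimatesB} the operator $b_{q}=h^{-1}\Re(q)$ extends continuously to $0$ at the punctures, so $w=-\det(h^{-1}\Re(q))$ is bounded and hence in $L^{2}$ on the finite-area surface. Second, in your final sentence on variable $K_{h}$, Young's inequality applied to $\|K_{h}\|_{L^{\infty}}\sqrt{A}\,\|v\|_{L^{2}}$ leaves an additive constant, so as written you only obtain $F_{H}(u)\geq \epsilon\|u\|_{L^{2}}^{2}-C$ rather than the stated inequality. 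To close this you must feed back a uniform positive lower bound from the discarded exponential terms, for instance the pointwise AM--GM estimate $\tfrac12 H^{2}e^{2u}+\tfrac12 w\,e^{-2u}\geq |H|\sqrt{w}$, which integrates to a fixed positive constant (using $q\not\equiv 0$) and absorbs the Young remainder. In the actual application in the paper the metric is rescaled so that $K_{h}\equiv -H^{2}/2$ is constant; in that case your argument goes through cleanly without this extra step.
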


\begin{prop}The map $\Phi_{H}$ is well-defined for every $H<0$.
\end{prop}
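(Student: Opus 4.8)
The plan is to reduce the well-definedness of $\Phi_H$ to the solvability of the Gauss equation (\ref{eq:PDE}) for the conformal factor $u$, since, as already observed, the Codazzi equation for $B=I^{-1}\Re(q)+HE$ is automatic once $H$ is constant. Solutions of (\ref{eq:PDE}) are exactly the critical points of the strictly convex functional $F_H$, so \emph{uniqueness} of $u$ is immediate: a strictly convex functional has at most one critical point. It therefore remains to establish existence of a minimizer and then to upgrade it to a genuine solution with the required regularity.

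For existence I would separate the degenerate and non-degenerate cases. When $q\equiv 0$ the equation becomes $e^{-2u}(\Delta u+K_h)=-H^2$, the prescribed-curvature equation for a metric of constant curvature $-H^2$ in the conformal class of $h$, whose unique solution is provided by Troyanov's theorem (\cite{Tro}). When $q\not\equiv 0$ I would run the direct method of the calculus of variations using the coercivity estimate of Lemma \ref{lm:technicalestimate}: that estimate bounds the $L^2$-norm along a minimizing sequence, and since the exponential terms of $F_H$ are nonnegative, the Dirichlet term is then bounded as well, giving a bound in $H^1$; weak lower semicontinuity of $F_H$ yields a minimizer, and strict convexity identifies it as the unique critical point, hence the unique weak solution of (\ref{eq:PDE}).

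The main obstacle is that Lemma \ref{lm:technicalestimate} requires $K_h\in(-H^2,0)$, whereas a hyperbolic cone metric has $K_h\equiv-1$, which lies in this interval only when $|H|>1$; for $0<|H|\le 1$ the hyperbolic background is not admissible. To treat all $H<0$ uniformly I would exploit the conformal invariance of the problem: equation (\ref{eq:PDE}) and the resulting metric $I=e^{2u}h$ depend only on the conformal class of $h$ and on $\Re(q)$, so I am free to replace $h$ by any representative of its conformal class. Rescaling $h$ by the positive constant $1/K_0$ with $K_0\in(0,H^2)$ produces the metric $\hat h$ of constant curvature $-K_0$, so that $K_{\hat h}\in(-H^2,0)$, while $\hat h^{-1}\Re(q)$ is still traceless and hence $\det(\hat h^{-1}\Re(q))\le 0$. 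Applying the variational argument with background $\hat h$ and translating the solution back by the constant relating $h$ and $\hat h$ then yields the desired $u$ for every $H<0$.

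Finally, the minimizer produced variationally is a priori only of class $H^1$, so one must show it is a genuine solution with the regularity claimed in the definition of $\Phi_H$. Standard interior elliptic regularity gives that $u$ is smooth on $\Sigma_{\mathpzc{p}}$; the behaviour at the cone points, namely continuity of $u$ up to $\mathpzc{p}$, follows from the weighted H\"older (resp. Sobolev) theory near the singularities recorded in \cite{GR}, using that near each $p_i$ the background has the model form $e^{2u_i}|z|^{2\beta_i}|dz|^2$ and that $\Re(q)$ has at most a simple pole, so that $e^{-4u}\det(h^{-1}\Re(q))$ stays controlled. Combined with existence and uniqueness, this shows that the pair $(I=e^{2u}h,\ \II=\Re(q)+HI)$ satisfies the Gauss--Codazzi equations and defines an embedded convex $H$-surface orthogonal to the singular lines, so $\Phi_H$ is well defined.
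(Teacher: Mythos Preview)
Your proposal is correct and follows essentially the same route as the paper: both use the direct method on the strictly convex functional $F_H$, circumvent the curvature hypothesis of Lemma~\ref{lm:technicalestimate} by rescaling the background metric (the paper takes the specific choice $2h/H^2$, whose curvature is $-H^2/2\in(-H^2,0)$, while you allow any $h/K_0$ with $K_0\in(0,H^2)$), and then invoke standard regularity to upgrade the weak minimizer. The paper additionally remarks that the resulting surface is future-convex by Lemma~\ref{lm:est_princ_curv}, which you assert but do not justify; you may want to cite that lemma explicitly.
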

\begin{proof} Fix $H<0$. The map $\Phi_{H}$ is well-defined if and only if $F_{H}$ has a continuous minimum point $u$, which is smooth outside the punctures. Let $u_{n}$ be a sequence in $L^{2}(\Sigma_{\mathpzc{p}},h)$ minimizing the functional $F_{H}$. By applying the previous proposition to the couple $(2h/H^{2},q)$, the sequence $u_{n}$ remains in a ball of $L^{2}(\Sigma_{\mathpzc{p}},2h/H^{2})$, thus it converges weakly, up to subsequences to a limit $u$. By construction, $u$ is the minimum of $F$, hence it is a weak solution of Equation (\ref{eq:PDE}). By standard regularity theory, $u$ is smooth on $\Sigma_{\mathpzc{p}}$ and extends continuously on $\mathpzc{p}$. \\
The surface obtained from these embedding data $(I=e^{2u}h,B=I^{-1}\Re(q)+HE)$ has constant mean curvature $H<0$ and it is future-convex by Lemma \ref{lm:est_princ_curv}.
\end{proof}

\begin{prop}\label{prop:continuity}The map $\Phi_{H}$ is continuous.
\end{prop}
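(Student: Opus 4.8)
The plan is to reduce the statement to the continuous dependence of the PDE solution on the parameters, using the Bonsante--Seppi parametrisation. By Theorem~\ref{thm:parameterisationMink} the target $\mathcal{G}\mathcal{H}^{0}_{\bm{\theta}}(\Sigma_{\mathpzc p})$ is homeomorphic to $T\Tp$ via the third fundamental form $\tilde h$ and the meromorphic quadratic differential $\tilde q$ attached to the traceless part of $B^{-1}$ of any convex Cauchy surface. The surface $S_H$ that $\Phi_H$ assigns to $(h,q)$ is exactly such a surface, so it suffices to prove that the composition $(h,q)\mapsto(\tilde h,\tilde q)$ is continuous. Now $\tilde h=\III=I(B\cdot,B\cdot)$ and $\tilde q$ are algebraic expressions in the induced metric $I=e^{2u}h$ and the shape operator $B=I^{-1}\Re(q)+HE$; hence the whole problem reduces to showing that the minimiser $u=u(h,q)$ of the functional $F_H$ depends continuously on $(h,q)$.

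To this end I would fix a sequence $(h_n,q_n)\to(h,q)$ in $T\Tp$ and denote by $u_n$, $u$ the corresponding minimisers. The main analytic input is the coercivity estimate of Lemma~\ref{lm:technicalestimate}: applied, as in the proof of well-definedness, to the rescaled couples $(2h_n/H^2,q_n)$, it yields $F_H^{(h_n,q_n)}(u_n)\ge\epsilon\|u_n\|_{L^2}^2$ with a constant $\epsilon>0$ that may be chosen uniformly for parameters close to $(h,q)$. Since $u_n$ is a minimiser, $F_H^{(h_n,q_n)}(u_n)\le F_H^{(h_n,q_n)}(0)$ is uniformly bounded, which gives a uniform $L^2$-bound; controlling the remaining terms then bounds $u_n$ in the energy space $H^1$ attached to the singular metric $h$, and I would extract a weakly convergent subsequence $u_n\rightharpoonup u_\infty$.

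Next I would identify $u_\infty$. Because the coefficients $K_{h_n}$, $\det(h_n^{-1}\Re(q_n))$ and the area forms $dA_{h_n}$ converge, the functionals $F_H^{(h_n,q_n)}$ converge to $F_H^{(h,q)}$ on fixed test functions while remaining weakly lower semicontinuous; passing to the limit in $F_H^{(h_n,q_n)}(u_n)\le F_H^{(h_n,q_n)}(v)$ shows that $u_\infty$ minimises $F_H^{(h,q)}$. By the strict convexity of $F_H$ the minimiser is unique, so $u_\infty=u$ and the whole sequence converges weakly. Convergence of the energies $F_H^{(h_n,q_n)}(u_n)\to F_H^{(h,q)}(u)$ combined with weak convergence upgrades this to strong convergence of the energy norm, and standard elliptic regularity promotes it to convergence in $C^{\infty}_{\mathrm{loc}}(\Sigma_{\mathpzc p})$ with continuity up to the punctures. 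From $u_n\to u$ and $(h_n,q_n)\to(h,q)$ one then reads off $I_n\to I$, $B_n\to B$, hence $\III_n\to\III$ and $B_n^{-1}\to B^{-1}$, which gives $(\tilde h_n,\tilde q_n)\to(\tilde h,\tilde q)$ in $T\Tp$, as desired.

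The main obstacle is the analysis near the cone points. There the metric $h$ is singular, $u$ is merely continuous, and Equation~\eqref{eq:PDE} holds only weakly, so the uniform coercivity, the convergence of the functionals, and the final regularity and extension steps must all be performed in the weighted H\"older/Sobolev spaces adapted to the singularities. The point that makes these weighted estimates uniform along the sequence is that the cone angles $\bm{\theta}$, and hence the weights $|z|^{2\beta_i}$, are fixed; this allows one to compare the singular parts of $h_n$ and $h$ uniformly and to transfer the variational argument to the limit without losing control at $\mathpzc p$.
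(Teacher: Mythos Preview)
Your approach is sound but follows a genuinely different route from the paper. You argue via variational stability that the minimiser $u(h,q)$ of $F_H$ depends continuously on the data, and then read off the Bonsante--Seppi coordinates $(\tilde h,\tilde q)$ of the $H$-surface algebraically from $I_n\to I$, $B_n\to B$. The paper instead avoids the PDE entirely: it passes from the $H$-surface to the equidistant $K$-surface $\Sigma_{-4H^2}$ of Lemma~\ref{lm:existenceKsurface} and shows that \emph{its} embedding data $(I'_n,B'_n)$ converge. The key observation is that, by Lemmas~\ref{lm:Gaussharmonic} and~\ref{lm:projharmonic}, the identity maps $(\Sigma_{\mathpzc p},h_n)\to(\Sigma_{\mathpzc p},\III_n)$ and $(\Sigma_{\mathpzc p},h_n)\to(\Sigma_{\mathpzc p},g_n)$ are harmonic with Hopf differentials $\pm Hq_n$; convergence of $\III_n$, $g_n$ (hence of $I'_n$) and of the minimal Lagrangian maps $m_n$ (hence of $B'_n=-2Hb_n$) then follows from the continuity of the harmonic-map parametrisation of $\Tp$ established in \cite{QiyuAdS}. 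Your route is more self-contained and does not rely on the $H$--$K$ correspondence or the harmonic-map machinery, but it requires carrying out the weighted analysis near the punctures in full (uniform coercivity along the sequence, the borderline case $q_n\to 0$ not covered by Lemma~\ref{lm:technicalestimate}, and the upgrade from weak $H^1$-convergence of $u_n$ to convergence of the isotopy class of $\III_n$ in $\Tp$); the paper's route trades all of this for black-box appeals to results on harmonic maps between cone surfaces that already absorb the singular analysis, which is why the conformal factor $u$ never appears in its proof.
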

\begin{proof}We first notice that a CGHM Minkowski manifold with particles is uniquely determined by the embebbing data of an embedded strictly convex space-like surface orthogonal to the singular locus. Namely, if $I$ and $B$ satisfy the Gauss-Codazzi equations for surfaces embedded in Minkowski manifolds, we can consider the manifold
\[
	(\Sigma_\mathpzc{p} \times (-\delta, \delta), g)
\]
where $g=-dt^{2}+I((E-tB)\cdot, (E-tB)\cdot)$. Since $B$ is uniformly negative, it is possible to find $\delta$ such that the metric $g$ is non-degenerate for every $t \in (-\delta, \delta)$. It is easy to check that $g$ endows $\Sigma_{\mathpzc{p}}\times (-\delta, \delta)$ with a Minkowski structure with particles, where the singular lines are $\{p_{i}\}\times (-\delta, \delta)$. By the uniqueness of the maximal extension (see e.g. \cite[Proposition 6.24]{bbsads} for the AdS case), $\Sigma_{\mathpzc{p}}\times (-\delta, \delta)$ is contained in a unique CGHM Minkowski $3$-manifold $M$ with particles.\\
From this remark, we deduce that, to prove continuity of the map $\Phi_{H}$, it is sufficient to show that if $(h_{n},q_{n}) \in T\Tp$ is a sequence converging to $(h,q)\in T\Tp$, then the embedding data $(I_{n}',B_{n}')$ of the $K$-surfaces $(\Sigma_{K})_n$ embedded into $M_{n}=\Phi_{H}(h_{n},q_{n})$, provided by Lemma \ref{lm:existenceKsurface}, converge to the embedding data $(I',B')$ of the $K$-surface $\Sigma_K$ embedded into $M=\Phi_{H}(h,q)$. Note that here $(\Sigma_{K})_n$ (resp. $\Sigma_{K}$) is the equidistant surface at oriented distance $1/(2H)$ from $(S_{H})_n$ (resp. $S_H$), which is an $H$-surface in $M_n$ (resp. $M$) with induced metric $I_n$ (resp. $I$) conformal to $h_n$ (resp. $h$) and second fundamental form $\II_n=\Re(q_n)+HI_n$ (resp. $\II=\Re(q)+HI$), and $K=-4H^2$. Let us write the induced metrics on $(\Sigma_{K})_n$ and $\Sigma_{K}$ respectively as
\[
	I_{n}'=\frac{1}{4H^{2}}g_{n} \ \ \ \ \ \ \ \ \text{and} \ \ \ \ \ \ \ \ \ I'=\frac{1}{4H^{2}}g,
\]
where $g_{n}$ are hyperbolic metrics on 
$\Sigma_{\mathpzc{p}}$ with cone singularities of angle $\bm{\theta}$. Let us denote by $\III_{n}$ the third fundamental form of the K-surface $(\Sigma_{K})_n$ (which is the same as the third fundamental form of the surface $(S_{H})_n$ since they are equidistant). \\
By Lemma \ref{lm:Gaussharmonic} the identity map
\[
	(\Sigma_{\mathpzc{p}},h_{n}) \rightarrow (\Sigma_{\mathpzc{p}},\III_{n})
\]
is harmonic with Hopf differential $Hq_{n}$ (note that the harmonicity is conformally invariant on the domain). Since $h_{n}$ converges to $h$ and $q_{n}$ converges to $q$, the sequence $\III_{n} \in \Tp$ must converge to the unique hyperbolic metric $\III$ on $\Sigma_{\mathpzc{p}}$ with cone singularities of angle $\bm{\theta}$ such that the identity from $(\Sigma_{\mathpzc{p}},h)$ to $(\Sigma_{\mathpzc{p}},\III)$ is harmonic with Hopf differential 
$Hq$. Notice that $\III$ is uniquely determined by Proposition 5.9 in \cite{QiyuAdS} and coincide exactly with the third fundamental form of the $K$-surface $\Sigma_{K}$ embedded in $M=\Phi_{H}(h,q)$.\\
For the same reason the sequence of hyperbolic metrics $g_{n}$ must converge to unique $g \in \Tp$ such that the identity map
\[
	(\Sigma_{\mathpzc{p}}, h) \rightarrow (\Sigma_{\mathpzc{p}}, g)
\]
is harmonic with Hopf differential $-Hq$ and $I'=\frac{1}{4H^{2}}g$ coincides with the induced metric on the $K$-surface $\Sigma_{K}$ embedded in $M$. Hence, $I'_n$ converges to $I'$.\\
Moreover, the sequence of minimal Lagrangian maps isotopic to the identity
\[
	m_{n}:(\Sigma_{\mathpzc{p}},g_{n}) \rightarrow (\Sigma_{\mathpzc{p}},\III_{n})
\]
converges to the unique minimal Lagrangian map isotopic to the identity
\[
	m:(\Sigma_{\mathpzc{p}},g) \rightarrow (\Sigma_{\mathpzc{p}},\III) \
\]
and the corresponding bundle morphisms $b_{n}$ converge to $b$ (where $b_n$ and $b$ are given by Lemma \ref{lm:minimal Lagrangian}). Since the shape operator, denoted by $B'_n$, of $\Sigma_{K,n}$ can be written as
\[
	B_{n}'=-2Hb_{n} \ ,
\]
we deduce that $B_{n}'$ converges to the shape operator $B'$ of $\Sigma_K$ and the proof is complete.
\end{proof}

The next step towards the proof of Theorem \ref{thm:newparameterisation} is the injectivity of the map $\Phi_{H}$. This will follow from an application of the Maximum Principle and from the behaviour of the principal curvatures of a convex surface in a Minkowski manifold along the normal flow.

\begin{prop}\label{prop:injectivity}The map $\Phi_{H}$ is injective for every $H<0$.
\end{prop}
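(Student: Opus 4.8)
The plan is to reduce the injectivity of $\Phi_{H}$ to the uniqueness of the $H$-surface inside a fixed manifold, and then to prove this uniqueness by a maximum principle argument driven by the normal flow. Suppose $\Phi_{H}(h_{1},q_{1})=\Phi_{H}(h_{2},q_{2})$, and identify the two isotopic manifolds with a single CGHM Minkowski manifold $M$ with particles. Then $M$ contains two space-like surfaces $S_{1},S_{2}$, both orthogonal to the singular lines and both of constant mean curvature $H$, such that the conformal class of the induced metric $I_{i}$ is $h_{i}$ and $\Re(q_{i})=I_{i}((B_{i}-HE)\cdot,\cdot)$, where $B_{i}$ is the shape operator of $S_{i}$. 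Since $(h_{i},q_{i})$ is entirely recovered from the pair $(I_{i},\II_{i})$, it is enough to prove that $S_{1}=S_{2}$ as embedded surfaces, i.e. that $M$ carries a \emph{unique} $H$-surface orthogonal to the particles. By Lemma \ref{lm:est_princ_curv} both $S_{1}$ and $S_{2}$ are future-convex, and being compact space-like surfaces orthogonal to the singular locus they are Cauchy surfaces of $M$.

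To establish uniqueness I would push $S_{2}$ along its future normal flow $\psi^{t}$. Because the principal curvatures of a future-convex surface are strictly negative, Lemma \ref{lm:principalcurvaturesMink} guarantees that $\psi^{t}(S_{2})$ is a smooth embedded space-like surface for every $t\geq 0$, and that at each point the principal curvatures $\mu_{i}/(1-\mu_{i}t)$ increase strictly towards $0$ as $t$ grows. Using compactness of the Cauchy surfaces together with global hyperbolicity, for $t$ large enough $\psi^{t}(S_{2})$ lies entirely to the future of $S_{1}$; I then set $t_{0}=\inf\{t\geq 0 : \psi^{t}(S_{2})\subset \overline{I^{+}(S_{1})}\}$. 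At $t=t_{0}$ the surface $\psi^{t_{0}}(S_{2})$ is weakly to the future of $S_{1}$ and tangent to it at some contact point $p$.

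The crux is the comparison of mean curvatures at $p$. Writing both surfaces locally as graphs over their common tangent plane with the future pointing upward, at a point where the gradients vanish one has $\II=-D^{2}f$; since $\psi^{t_{0}}(S_{2})$ lies above $S_{1}$, the difference of the graphing functions attains a minimum at $p$, forcing $D^{2}f_{\psi^{t_{0}}(S_{2})}(p)\geq D^{2}f_{S_{1}}(p)$, hence $B_{\psi^{t_{0}}(S_{2})}(p)\leq B_{S_{1}}(p)$ as quadratic forms and therefore $H(\psi^{t_{0}}(S_{2}))(p)\leq H$. On the other hand, if $t_{0}>0$ the monotonicity along the flow gives $\mu_{i}/(1-\mu_{i}t_{0})>\mu_{i}$ strictly for each $i$ (recall $\mu_{i}<0$ by Lemma \ref{lm:est_princ_curv}), so the pointwise mean curvature of $\psi^{t_{0}}(S_{2})$ at $p$ is strictly larger than $H$, a contradiction. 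Thus $t_{0}=0$, meaning $S_{2}$ itself lies weakly to the future of $S_{1}$ and touches it at $p$. Now $S_{1}$ and $S_{2}$ have the same constant mean curvature $H$ and are tangent at $p$ with one on the future side of the other; the difference of their graphing functions satisfies a second-order quasi-linear elliptic equation and is single-signed near $p$, so the strong maximum principle forces coincidence near $p$, and a standard open–closed argument propagates it over the whole connected surface. Hence $S_{1}=S_{2}$ and $\Phi_{H}$ is injective.

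The main obstacle I anticipate is the behaviour at the singular lines: the contact point $p$ may lie on a particle, where both surfaces carry cone singularities of angle $\theta_{i}<\pi$ and the ordinary maximum principle does not directly apply. Overcoming this requires the form of the maximum principle adapted to surfaces orthogonal to the singular locus with cone angles less than $\pi$, as developed in \cite{Schlenker-Krasnov} and \cite{QiyuAdS}; the hypothesis $\theta_{i}<\pi$, which renders the induced cone metrics convex, is precisely what makes the comparison valid at a singular contact point. A secondary technical point is to justify, via global hyperbolicity and the compactness of the Cauchy surfaces, that the flowed surface eventually lies entirely in the future of $S_{1}$, so that $t_{0}$ is well defined and finite.
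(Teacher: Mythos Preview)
Your argument is correct and follows essentially the same route as the paper: reduce injectivity to uniqueness of the $H$-surface, push one surface along the future normal flow until it is tangent to the other from the future side, and compare mean curvatures at the contact point using Lemma~\ref{lm:principalcurvaturesMink} (strict increase under the flow) against the maximum principle; the singular contact point is handled via umbilicity at the particles and the cone-adapted maximum principle of \cite{QiyuAdS}.

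The only organisational difference is that the paper first swaps the roles of $S_{1}$ and $S_{2}$ if necessary so that the first contact time is always $t_{0}>0$, which already yields the strict inequality $H<H_{t_{0}}(p)\leq H$ and finishes the proof; you instead allow $t_{0}=0$ and invoke the strong maximum principle for the equality case. Your extra step is valid but avoidable: if $t_{0}=0$ (i.e.\ $S_{2}$ already lies weakly to the future of $S_{1}$ and is tangent but distinct from it), simply push $S_{1}$ instead, which then forces a strictly positive contact time. This also spares you the strong maximum principle at a singular contact point, where it is more delicate than the comparison inequality you need.
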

\begin{proof} Suppose by contradiction that $\Phi_{H}$ is not injective for some $H<0$. Then, we could find a CGHM Minkowski manifold with particles that contains two distinct $H$-surfaces $S_{1}$ and $S_{2}$ orthogonal to the singular lines. Up to changing the role of $S_{1}$ and $S_{2}$, only three mutual positions for $S_{1}$ and $S_{2}$ are possible: they intersect transversely at some point; $S_{2}$ is tangent to $S_{1}$ and lies in the future of $S_{1}$; $S_{2}$ is disjoint from $S_{1}$ in the future of $S_{1}$. In any case, since equidistant surfaces from $S_{1}$ provide a well-defined foliation of the future of $S_{1}$, there exists a time $t_{0}> 0$ such that the surface $\psi^{t_{0}}(S_{1})$ is tangent to $S_{2}$ and lies in the future of $S_{2}$. Let $p$ be the point of tangency. We need to distinguish two cases:
\begin{itemize}
	\item if $p$ is a regular point and we denote by $H_{t_{0}}$ the mean curvature of $\psi^{t_{0}}(S_{1})$ we have
\[
	H< H_{t_{0}}(p) \leq  H \ ,
\]
where the first inequality follows by the fact that the mean curvature increases mean moving towards the future (Lemma \ref{lm:principalcurvaturesMink}) and the second inequality follows by the Maximum Principle (\cite{BBZAdSCMC});
	\item if $p$ is a singular point, since all the surfaces considered here are umbilical at the intersections with the singular lines (see Proposition \ref{prop:estimatesB}), the formulas of Lemma \ref{lm:principalcurvaturesMink} hold and the Maximum Principle can be applied (\cite[Proposition 3.6]{QiyuAdS}), hence we obtain again
\[
       H < H_{t_{0}}(p) \leq H \ .
\]
\end{itemize}
In both cases we obtain a contradiction, hence every CGHM Minkowski manifold with particles contains at most one $H$-surface and the injectivity of $\Phi_{H}$ follows.
\end{proof}

In particular, we deduce the following:
\begin{cor}\label{cor:uniqueHfoliationMink} Let $M$ be a CGHM Minkowski manifold with particles. If $M$ admits a foliation by constant mean curvature surfaces orthogonal to the singular lines, then it is unique.
\end{cor}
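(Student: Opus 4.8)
The plan is to deduce uniqueness directly from the injectivity of $\Phi_{H}$ (Proposition \ref{prop:injectivity}) together with the behaviour of the mean curvature under the normal flow (Lemma \ref{lm:principalcurvaturesMink}). Suppose $M$ carries two foliations $\mathcal{F}$ and $\mathcal{F}'$ by constant mean curvature surfaces orthogonal to the singular lines. Each leaf is a compact space-like Cauchy surface orthogonal to the particles with some constant mean curvature, and since $M$ is future-complete every such leaf is future-convex with mean curvature $H<0$, so it falls under the scope of Lemma \ref{lm:est_princ_curv}. By Proposition \ref{prop:injectivity}, for each $H<0$ the manifold $M$ contains at most one $H$-surface orthogonal to the singular lines, which I denote $S_{H}$. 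Consequently every leaf of $\mathcal{F}$ and of $\mathcal{F}'$ is one of the surfaces $S_{H}$, and the two foliations are subfamilies of $\{S_{H}\}_{H<0}$; it remains to understand the mutual position of these surfaces.

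The crucial geometric step is to prove that $S_{H_1}$ and $S_{H_2}$ are disjoint whenever $H_1\neq H_2$. I would run the same comparison as in the proof of Proposition \ref{prop:injectivity}. Since $S_{H_1}$ is future-convex, its equidistant surfaces $\psi^{t}(S_{H_1})$ are well-defined for all $t$ in the future and up to the initial singularity in the past (see the remark following Lemma \ref{lm:principalcurvaturesMink}), so the Cauchy surface $S_{H_2}$ is the graph $t=f(x)$ of a continuous function $f$ over $S_{H_1}$ along this normal flow. As $S_{H_1}$ is compact, $f$ attains its maximum at some $x_{+}$ and its minimum at some $x_{-}$. If $f$ vanished somewhere, then at $x_{+}$ the surface $\psi^{f(x_{+})}(S_{H_1})$ would be tangent to $S_{H_2}$ from the future and at $x_{-}$ the surface $S_{H_2}$ would be tangent to $\psi^{f(x_{-})}(S_{H_1})$ from the future; comparing mean curvatures by the Maximum Principle (\cite{BBZAdSCMC} at regular points and \cite[Proposition 3.6]{QiyuAdS} at the punctures, where all the surfaces involved are umbilical by Proposition \ref{prop:estimatesB}) and using that the mean curvature strictly increases along the future normal flow (Lemma \ref{lm:principalcurvaturesMink}) would force $H_1=H_2$. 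Hence $f$ never vanishes, so $S_{H_2}$ lies entirely in the future or entirely in the past of $S_{H_1}$; in particular $S_{H_1}\cap S_{H_2}=\emptyset$.

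It follows that the family $\{S_{H}\}_{H<0}$ consists of pairwise disjoint surfaces, so each point $p\in M$ lies on at most one of them. Given $p\in M$, the leaf of $\mathcal{F}$ through $p$ and the leaf of $\mathcal{F}'$ through $p$ are both of the form $S_{H}$ and both contain $p$, hence they coincide; as $p$ is arbitrary, $\mathcal{F}$ and $\mathcal{F}'$ have the same leaves and are therefore equal. I expect the disjointness step to be the main obstacle: one must carry out the comparison at \emph{both} the maximum and the minimum of $f$, keeping careful track of the time orientation and of the sign of $f$ so that the monotonicity of the mean curvature under the flow yields the contradiction, and one must treat the cone points separately using the umbilicity of the surfaces there together with the singular Maximum Principle.
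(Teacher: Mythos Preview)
Your argument is correct and is essentially what the paper has in mind: the paper states the corollary as an immediate consequence of Proposition~\ref{prop:injectivity} (``In particular, we deduce the following'') without giving a separate proof, and your explicit disjointness step for $S_{H_1}$ and $S_{H_2}$ with $H_1\neq H_2$ is precisely the elaboration needed, carried out with the same Maximum Principle and normal-flow comparison as in that proposition. One minor point: the assertion that every leaf has $H<0$ is true but does not follow from future-completeness alone; it follows from Gauss--Bonnet (ruling out $H=0$) together with a comparison, via the same Maximum Principle at the minimum of the graph function, against the strictly future-convex Cauchy surface guaranteed by Definition~\ref{def:CGHM}.
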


\subsubsection{Properness of the map $\Phi_{H}$.} By the Invariance of Domain Theorem, the proof of Theorem \ref{thm:newparameterisation} will follow from Proposition \ref{prop:continuity} and Proposition \ref{prop:injectivity} if we show that the map $\Phi_{H}$ is also proper. \\
\\
The key technical lemma towards the proof of the properness of $\Phi_{H}$ is the following:
\begin{lemma}\label{lm:keylemmaproperness}Let $M_{n}=\Phi_{H}(h_{n},q_{n})$ be a convergent sequence of CGHM Minkowski manifolds with particles. Let $g_{n}$ be the sequence of hyperbolic metrics in the conformal class of the induced metrics on the surfaces $(\Sigma_{f(H)})_{n} \subset M_{n}$ with constant Gauss curvature $f(H)=-4H^{2}$, obtained from Lemma \ref{lm:existenceKsurface}. If the metrics $g_{n}$ are contained in a compact set of 
$\Tp$, then the sequence $(h_{n},q_{n})$ converges up to subsequences.
\end{lemma}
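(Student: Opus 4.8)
The plan is to exploit the three hyperbolic metrics attached to each manifold $M_{n}$ and the harmonic/minimal Lagrangian relations among them established above. For each $n$ we have the center $h_{n}$, the metric $g_{n}$ (conformal to the induced metric on the $K$-surface $(\Sigma_{f(H)})_{n}$), and the third fundamental form $\III_{n}$, common to $(S_{H})_{n}$ and $(\Sigma_{f(H)})_{n}$ since they are equidistant. These are related by: the identity $(\Sigma_{\mathpzc{p}},h_{n})\to(\Sigma_{\mathpzc{p}},\III_{n})$ is harmonic with Hopf differential $Hq_{n}$ (Lemma \ref{lm:Gaussharmonic}), the identity $(\Sigma_{\mathpzc{p}},h_{n})\to(\Sigma_{\mathpzc{p}},g_{n})$ is harmonic with Hopf differential $-Hq_{n}$ (Lemma \ref{lm:projharmonic}), and consequently $m_{n}=\psi_{H}\circ G_{H}^{-1}:(\Sigma_{\mathpzc{p}},g_{n})\to(\Sigma_{\mathpzc{p}},\III_{n})$ is minimal Lagrangian with center $[h_{n}]$. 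The strategy is to promote precompactness of $g_{n}$ and $\III_{n}$ first to precompactness of the center $h_{n}$, and then to precompactness of the Hopf differentials, that is, of $q_{n}$.

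First I would show that $\III_{n}$ subconverges in $\Tp$. Since $M_{n}$ converges in $\mathcal{G}\mathcal{H}^{0}_{\bm{\theta}}(\Sigma_{\mathpzc{p}})$, the Bonsante--Seppi parametrisation (Theorem \ref{thm:parameterisationMink}) gives convergence of the associated pair in $T\Tp$; as the first coordinate of this pair is exactly the third fundamental form of any strictly convex surface in $M_{n}$, and $(S_{H})_{n}$ is strictly convex by Lemma \ref{lm:est_princ_curv}, we obtain $\III_{n}\to\III_{\infty}$ in $\Tp$. Combined with the standing hypothesis that $g_{n}$ lies in a compact subset of $\Tp$, we may assume $g_{n}\to g_{\infty}$ after passing to a subsequence.

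Next I would deduce convergence of the center $h_{n}$. The map $m_{n}$ is the minimal Lagrangian map isotopic to the identity between the two convergent hyperbolic metrics $g_{n}$ and $\III_{n}$; by Lemma \ref{lm:minimal Lagrangian} it is encoded by a bundle morphism $b_{n}$ that is $g_{n}$-self-adjoint, positive, Codazzi, with $\det(b_{n})=1$ and eigenvalues tending to $1$ at the cone points. Since both endpoint metrics remain in a compact set, the pointwise strain between them is uniformly controlled, so the eigenvalues of $b_{n}$ stay bounded away from $0$ and $\infty$; elliptic regularity for the associated harmonic maps with cone singularities (\cite{GR}) then yields a subsequence with $b_{n}\to b_{\infty}$, while uniqueness of the minimal Lagrangian map between fixed cone-surfaces (\cite{QiyuAdS}) identifies $b_{\infty}$ with the morphism of the minimal Lagrangian map between $g_{\infty}$ and $\III_{\infty}$. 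The center $[h_{n}]$, being reconstructed from $g_{n}$ and $b_{n}$, therefore subconverges to $[h_{\infty}]$ in $\Tp$.

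Finally, with $h_{n}\to h_{\infty}$ and $g_{n}\to g_{\infty}$ both convergent, the harmonic map isotopic to the identity $(\Sigma_{\mathpzc{p}},h_{n})\to(\Sigma_{\mathpzc{p}},g_{n})$ converges to the unique harmonic map $(\Sigma_{\mathpzc{p}},h_{\infty})\to(\Sigma_{\mathpzc{p}},g_{\infty})$ provided by \cite[Theorem 2]{GR}, and hence its Hopf differential $-Hq_{n}$ converges; since $H$ is a fixed nonzero constant, $q_{n}$ subconverges. Together with $h_{n}\to h_{\infty}$ this gives $(h_{n},q_{n})\to(h_{\infty},q_{\infty})$ in $T\Tp$, as desired. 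The main obstacle is the middle step: obtaining uniform control and genuine convergence of the minimal Lagrangian and harmonic maps up to and across the cone points, where the standard compactness theory for harmonic maps between closed surfaces does not directly apply and one must instead rely on the weighted-space estimates near the singularities.
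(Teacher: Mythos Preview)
Your proposal is correct and follows essentially the same route as the paper: use Theorem~\ref{thm:parameterisationMink} to get convergence of $\III_{n}$, then exploit that the identity between $g_{n}$ and $\III_{n}$ is minimal Lagrangian with center $[h_{n}]$ to deduce subconvergence of $h_{n}$ and of the Hopf differentials, hence of $q_{n}$. The only difference is that the paper does not attempt your middle step directly but instead invokes \cite[Lemma~3.19]{QiyuAdS}, which states precisely that if $g_{n}$ and $\III_{n}$ converge then the minimal Lagrangian maps, their centers, and the associated Hopf differentials all converge; your sketch of eigenvalue bounds plus elliptic regularity is the right intuition for why that lemma holds, and you are right to flag it as the only nontrivial analytic input.
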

\begin{proof}By Theorem \ref{thm:parameterisationMink}, the third fundamental forms $\III_{n}\in \Tp$ of the surfaces $(\Sigma_{f(H)})_{n}$ are contained in a compact subset of $\Tp$. It is proved in Lemma 3.19 of \cite{QiyuAdS}, that if $g_{n}$ and $\III_{n}$ converge in $\Tp$, then the minimal Lagrangian diffeomorphisms $m_{n}:(\Sigma_{\mathpzc{p}}, g_{n}) \rightarrow (\Sigma_{\mathpzc{p}}, \III_{n})$ isotopic to the identity converge to the minimal Lagrangian map $m:(\Sigma_{\mathpzc{p}}, g) \rightarrow (\Sigma_{\mathpzc{p}}, \III)$ isotopic to the identity. In addition, the sequence of centers $c_{n}$ converges to the center $c$, and the Hopf differentials of the harmonic maps involved in the definition (see Definition \ref{def: minmal Lagrangian}) of $m_{n}$ and $m$ converge.

Now, the centers $c_{n}$ are exactly the conformal classes of the induced metrics $I_{n}$ on the surfaces
$(S_{H})_n$ with constant mean curvature $H$ by Lemma \ref{lm:Gaussharmonic} and Lemma \ref{lm:projharmonic}, hence $h_{n}$ converge in $\Tp$. Moreover, by Lemma \ref{lm:Gaussharmonic}, the Hopf differential of the harmonic map from $(\Sigma_{\mathpzc{p}},c_{n})$ to $(\Sigma_{\mathpzc{p}}, \III_{n})$ is, up to a multiplicative constant, exactly $q_{n}$. Therefore, the sequence $q_{n}$ converges.
\end{proof}

The following constructions will help us prove that the sequence $g_{n}$ is contained in a compact set. We first give a description of the positions of two convex space-like surfaces embedded in a CGHM Minkowski manifold with particles, which is an adaption of Lemma 5.7 in \cite{QiyudS} for de Sitter manifolds and Proposition 5.4 in \cite{TambuAdS} for anti-de Sitter manifolds.

\begin{lemma}\label{lm:mutualposition}Let $S_{1}$ and $S_{2}$ be two convex space-like surfaces embedded in a CGHM Minkowski manifold with particles, orthogonal to the singular lines. Suppose that $S_{1}$ and $S_{2}$ are umbilical at the intersection with the singular locus. Then, if the infimum of the Gauss curvature of $S_{1}$ is strictly bigger than the supremum of the Gauss curvature of $S_{2}$, then $S_{1}$ is in the future of $S_{2}$.
\end{lemma}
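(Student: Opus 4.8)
The plan is to argue by contradiction, assuming that $S_{1}$ is not contained in the future $I^{+}(S_{2})$, and to produce a point where an equidistant translate of $S_{1}$ is tangent to $S_{2}$ with incompatible Gauss curvatures. Since a CGHM Minkowski manifold with particles is future complete and $S_{2}$ is a Cauchy surface, every future-directed normal geodesic issued from $S_{1}$ is an inextensible future causal curve, hence meets $S_{2}$ exactly once and lies in $I^{+}(S_{2})$ afterwards; by compactness of $S_{1}$ there is a uniform $T>0$ with $\psi^{T}(S_{1})\subset I^{+}(S_{2})$, where $\psi^{t}$ denotes the future normal flow of $S_{1}$, which is well defined for all $t\geq 0$ because $S_{1}$ is future-convex (see the remark following Lemma \ref{lm:principalcurvaturesMink}). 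I would then set
\[
	t_{0}=\inf\{\, t\geq 0 \ : \ \psi^{t}(S_{1})\subset I^{+}(S_{2})\,\}\geq 0,
\]
so that, by continuity of the flow and compactness, $\psi^{t_{0}}(S_{1})$ lies in the closed future of $S_{2}$ and is tangent to $S_{2}$ at some point $p$, with $\psi^{t_{0}}(S_{1})$ locally to the future of $S_{2}$ near $p$.

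At the contact point $p$ I would apply the geometric maximum principle. Writing both surfaces near $p$ as graphs over their common tangent plane, the fact that $\psi^{t_{0}}(S_{1})$ lies to the future of $S_{2}$ forces a comparison of shape operators $B_{\psi^{t_{0}}(S_{1})}(p)\leq B_{S_{2}}(p)$, both being negative definite by convexity. Since $\Sec(X)=0$ in the Minkowski model, the Gauss equation reads $K_{I}=-\det(B)$, and the monotonicity of the determinant on the cone of negative definite symmetric $2\times 2$ operators then gives
\[
	K_{\psi^{t_{0}}(S_{1})}(p)\leq K_{S_{2}}(p).
\]
When $p$ is a regular point this is the usual maximum principle (\cite{BBZAdSCMC}); when $p$ lies on the singular locus the hypothesis that $S_{1}$ and $S_{2}$ are umbilical there guarantees that the principal curvatures, and hence the normal flow of Lemma \ref{lm:principalcurvaturesMink}, are well defined at $p$, so that the maximum principle for cone points (\cite[Proposition 3.6]{QiyuAdS}) applies.

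Finally, I would feed in the behaviour of the Gauss curvature under the future normal flow. From the formulas of Lemma \ref{lm:principalcurvaturesMink}, for a future-convex surface the principal curvatures $\lambda_{t},\mu_{t}$ increase towards $0$ as $t$ grows, so $\det(B_{t})=\lambda_{t}\mu_{t}$ decreases and $K_{t}=-\det(B_{t})$ increases along the flow; thus $K_{\psi^{t_{0}}(S_{1})}(p)\geq K_{S_{1}}(x)\geq \inf_{S_{1}}K$, where $p=\psi^{t_{0}}(x)$. Combining, $\inf_{S_{1}}K\leq K_{\psi^{t_{0}}(S_{1})}(p)\leq K_{S_{2}}(p)\leq \sup_{S_{2}}K$, contradicting the hypothesis $\inf_{S_{1}}K>\sup_{S_{2}}K$. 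I expect the genuinely delicate step to be the maximum principle at the cone points: this is precisely what the umbilicity assumption is there to make legitimate, since it ensures that the principal curvatures extend to $p$, that the equidistant surfaces stay orthogonal to the singular lines, and that the shape-operator comparison survives at the singularity; the regular-point comparison and the determinant monotonicity are routine.
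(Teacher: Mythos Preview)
Your argument is correct and follows essentially the same route as the paper: assume $S_{1}$ is not in the future of $S_{2}$, push $S_{1}$ forward along its normal flow until it first touches $S_{2}$, and derive the contradictory chain $\inf_{S_{1}}K\leq K_{\psi^{t_{0}}(S_{1})}(p)\leq K_{S_{2}}(p)\leq \sup_{S_{2}}K$ from the maximum principle (classical at regular points, umbilical version at singular points) together with the monotonicity of the Gauss curvature along the flow from Lemma~\ref{lm:principalcurvaturesMink}. The only cosmetic difference is that the paper invokes \cite[Lemma~3.12]{QiyuAdS} rather than \cite[Proposition~3.6]{QiyuAdS} for the comparison at cone points; you may want to check that the result you cite actually gives a principal-curvature (hence Gauss-curvature) comparison and not merely a mean-curvature one.
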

\begin{proof}Suppose by contradiction that $S_{1}$ is not contained entirely in the future of $S_{2}$. Since the foliation by equidistant surfaces is well-defined in the future of $S_{1}$, there exists a time $t\geq 0$ such that $S_{1}^{t}:=\psi^{t}(S_{1})$ is tangent to $S_{2}$ at a point $p$ and $S_{1}$ lies in the future of $S_{2}$. We distinguish two cases:
\begin{itemize}
	\item if $p$ is a singular point, from the classical Maximum Principle (\cite{BBZAdSCMC}) we deduce that
\[
	K_{S_{2}}(p)\geq K_{S_{1}^{t}}(p) \ .
\]
	Moreover, by Lemma \ref{lm:principalcurvaturesMink} the Gauss curvature increases when moving towards the future, thus we obtain
\[
\sup_{p \in S_{2}}K_{S_{2}}(p) \geq K_{S_{2}}(p)\geq K_{S_{1}^{t}}(p)\geq K_{S_{1}}(p) \geq \inf_{p \in S_{1}} K_{S_{1}}(p)
\]
which contradicts our assumption.
	\item if $p$ is a singular point, since $S_{1}$ is umbilical at the singular points, also $S_{1}^{t}$ is, hence we can apply the Maximum Principle for convex umbilical surfaces (\cite[Lemma 3.12]{QiyuAdS}) and conclude by the same argument as above.
\end{itemize}
\end{proof}

We want to construct a strictly future-convex space-like surface $S$ embedded in a CGHM Minkowski manifold with particles $M$ starting from the data $(\widetilde{h},\widetilde{q})$ provided by Theorem \ref{thm:parameterisationMink} with a precise control on the Gauss curvature of $S$. For every $L<0$, we define the following objects
\begin{align*}
	\widetilde{b}_{0}&=\widetilde{h}^{-1}\Re(\widetilde{q}) \ \ \ \ \ \ \ \ \ \ \ \widetilde{b}=\widetilde{b}_{0}+LE\\
	B&=\widetilde{b}^{-1}\ \ \ \ \ \ \ \ \ \ \ \ \ \ \ \ \ I=\widetilde{h}(\widetilde{b},\widetilde{b}) \ .
\end{align*}

\begin{prop}\label{prop:constructionsurfaces}The following facts hold.
\begin{enumerate}
	\item $I$ is a metric that carries cone singularities of angles $\bm{\theta}$ at $\mathpzc{p}$;
	\item for every constant $C>0$, we can choose $L<0$ such that $B$ is negative definite and $\det(\widetilde{b})>\frac{1}{C}$.
	\item the couple $(I,B)$ satisfies the Gauss-Codazzi equations for convex space-like surfaces embedded in Minkowski manifolds;
	\item the CGHM Minkowski manifold with particles that contains a surface $S$ with embedding data $(I,B)$ is exactly $M$;
	\item $S$ is umbilical at the intersection with the singular lines;
	\item the Gauss curvature of $S$ satisfies $-C<K_{I}<0$.
\end{enumerate}
\end{prop}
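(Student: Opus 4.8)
The plan is to read the construction as a \emph{duality}: the operator $\widetilde{b}_{0}=\widetilde{h}^{-1}\Re(\widetilde{q})$ is, by Proposition \ref{prop:estimatesB}, traceless, Codazzi and integrable for the hyperbolic metric $\widetilde{h}$, and extends continuously to $0$ at the marked points. Hence $\widetilde{b}=\widetilde{b}_{0}+LE$ is self-adjoint and Codazzi for $\widetilde{h}$ as well (a constant multiple of the identity is parallel), and I would show that the dual pair $(I,B)=(\widetilde{h}(\widetilde{b}\cdot,\widetilde{b}\cdot),\widetilde{b}^{-1})$ realises a convex surface whose invariants under Theorem \ref{thm:parameterisationMink} are exactly $(\widetilde{h},\widetilde{q})$.

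For the asymptotic claims (1), (2) and (5), I would diagonalise $\widetilde{b}$: its eigenvalues are $L\pm\lambda$, where $\lambda\ge 0$ is the positive eigenvalue of $\widetilde{b}_{0}$. Since $\widetilde{b}_{0}$ is continuous on the closed surface $\Sigma$ and vanishes at $\mathpzc{p}$, the function $\lambda$ is bounded, say $\lambda\le\lambda_{\max}$; choosing $|L|>\lambda_{\max}$ forces both eigenvalues $L\pm\lambda$ to be strictly negative, so $B=\widetilde{b}^{-1}$ is negative definite and $\det(\widetilde{b})=L^{2}-\lambda^{2}\ge L^{2}-\lambda_{\max}^{2}$, which exceeds $1/C$ once $|L|$ is large enough; this is (2). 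For (1), near $p_{i}$ one has $\widetilde{b}\to LE$, hence $I\to L^{2}\widetilde{h}$, and multiplying the hyperbolic cone metric $\widetilde{h}$ by the constant $L^{2}$ does not change the cone angle, so $I$ carries cone singularities of angle $\bm{\theta}$. The same limit gives $B\to L^{-1}E$ at $\mathpzc{p}$, so the two principal curvatures agree there and $S$ is umbilical along the singular lines, which is (5).

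The heart of the matter is (3), from which (4) and (6) follow. Self-adjointness of $B$ for $I$ is immediate from $I(BX,Y)=\widetilde{h}(X,\widetilde{b}Y)$ and the $\widetilde{h}$-self-adjointness of $\widetilde{b}$. For the Codazzi equation I would use the standard transformation of Levi-Civita connections under a Codazzi change of gauge (the identity already exploited in the proof of Lemma \ref{lm:existenceKsurface}): since $\widetilde{b}$ is invertible and Codazzi for $\widetilde{h}$, one has $\nabla^{I}=\widetilde{b}^{-1}\nabla^{\widetilde{h}}\widetilde{b}$, and a direct computation then gives $d^{\nabla^{I}}B=0$. The same identity yields the curvature formula $K_{I}=K_{\widetilde{h}}/\det(\widetilde{b})=-1/\det(\widetilde{b})$; since $\det(B)=1/\det(\widetilde{b})$, this is exactly the Gauss equation $\det(B)=\Sec(\R^{2,1}_{\theta})-K_{I}=-K_{I}$ for Minkowski surfaces, proving (3). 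Plugging in $\det(\widetilde{b})>1/C$ from (2) gives $-C<K_{I}=-1/\det(\widetilde{b})<0$, which is (6). Finally, the third fundamental form of $(I,B)$ is $\III=I(B\cdot,B\cdot)=\widetilde{h}$, and the traceless part of $B^{-1}=\widetilde{b}$ with respect to $\widetilde{h}$ is $\widetilde{b}_{0}=\widetilde{h}^{-1}\Re(\widetilde{q})$; hence $(I,B)$ is sent to $(\widetilde{h},\widetilde{q})$ by the parametrisation of Theorem \ref{thm:parameterisationMink}, so the CGHM manifold containing $S$ is $M$, which is (4).

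The main obstacle I anticipate is making the connection-transformation and curvature identities rigorous at and near the cone points, where $\widetilde{b}_{0}$ is only continuous: one has to check that $I$ is a genuine metric with the prescribed cone singularities and that the Codazzi and Gauss equations, a priori interpreted in the weak sense across $\mathpzc{p}$, reduce to the smooth computation on $\Sigma_{\mathpzc{p}}$. The integrability and the continuous vanishing of $\widetilde{b}_{0}$ at the punctures supplied by Proposition \ref{prop:estimatesB} are precisely what make these identities extend across the singular locus.
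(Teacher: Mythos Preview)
Your argument follows the paper's proof almost verbatim: the same eigenvalue computation for (2), the same connection transformation $\nabla^{I}=\widetilde{b}^{-1}\nabla^{\widetilde{h}}\widetilde{b}$ and curvature identity $K_{I}=K_{\widetilde{h}}/\det(\widetilde{b})$ for (3) and (6), the same limit $\widetilde{b}\to LE$ for (5), and the same identification of $(\III,\text{traceless part of }B^{-1})$ with $(\widetilde{h},\widetilde{q})$ for (4). The only substantive difference is item (1): the paper does not argue this directly but cites \cite[Theorem E]{BSCodazzi}, whereas you give the heuristic ``$I\to L^{2}\widetilde{h}$ near $p_{i}$''. Your own closing paragraph correctly identifies this as the delicate point: pointwise convergence of $\widetilde{b}$ to $LE$ is not by itself enough to conclude that $I=\widetilde{h}(\widetilde{b}\cdot,\widetilde{b}\cdot)$ has a cone singularity of the prescribed angle in the sense of Definition \ref{Def:hyperbolic metrics with cone singularities}, and the cited result supplies exactly the regularity needed.
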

\begin{proof} Point (1) is exactly Theorem E in \cite{BSCodazzi}.
\begin{enumerate}\setcounter{enumi}{1}
	\item A simple computation shows that
\[
	\det(\widetilde{b})=\det(\widetilde{b}_{0})+L^{2}.
\]
	Hence, it is sufficient to choose
\[
	L<-\sqrt{\frac{1}{C}-\inf\det(\widetilde{b}_{0})} \ .
\]
	Notice that the square root is always well-defined because, since $\widetilde{b}_{0}$ is traceless, it has negative determinant. Moreover, $B$ is negative definite, as
\[
	\trace(B)=\frac{\trace(\widetilde{b})}{\det(\widetilde{b})}=\frac{2L}{\det(\widetilde{b})}<0, \quad \det(B)=\frac{1}{\det(\widetilde{b})}>0\ .
\]
	\item Since $M$ is constant, $\widetilde{b}$ is Codazzi for $\widetilde{h}$. We deduce that the Levi-Civita connection of the metric $I$ is $\nabla^{I}=\widetilde{b}^{-1}\nabla^{\widetilde{h}}\widetilde{b}$ (where $\nabla^{\widetilde{h}}$ is the Levi-Civita connection of $\widetilde{h}$) and
\begin{align*}
	(d^{\nabla^{I}}B)(u,v)&=\nabla^{I}_{u}(B(v))-\nabla^{I}_{v}(B(u))-B([u,v]) \\
	&=\widetilde{b}^{-1}\nabla^{\widetilde{h}}_{u}v-\widetilde{b}^{-1}\nabla^{\widetilde{h}}_{v}u-\widetilde{b}^{-1}([u,v])\\
	&=\widetilde{b}^{-1}(\nabla^{\widetilde{h}}_{u}v-\nabla^{\widetilde{h}}_{v}u-[u,v])=0 \ .
\end{align*}
Moreover, the Gauss curvature of $I$ satisfies
\[	
	K_{I}=\frac{K_{\bar{h}}}{\det(\bar{b})}=-\det(B) \ .
\]
	\item Since the operator $B$ is negative definite, there exists $\delta>0$ such that the metric
\[
	g=-dt^{2}+I((E-tB)\cdot ,(E-tB)\cdot)
\]
	is well-defined for $t \in (-\delta, \delta)$. It is not difficult to check that $M_{\delta}=(\Sigma\times (-\delta, \delta) , g)$ is a Minkowski manifold with particles and the level $\{t=0\}$ is a space-like surface orthogonal to the singular locus with embedding data $(I,B)$. For the existence and uniqueness of the maximal extension, $M_{\delta}$ is contained in a unique CGHM Minkowski manifold with particles $M'$. In order to check that $M'=M$, it is sufficient to show that the data provided by Theorem \ref{thm:parameterisationMink} for $M$ and $M'$ coincide. We know that the data corresponding to $M$ are $(\widetilde{h}, \widetilde{q}) \in T\Tp$. The data for $M'$ are provided by any convex space-like surface in $M'$ orthogonal to the particles. If we choose $S$, we know that its third fundamental form is
\[
	\III=I(B \cdot, B \cdot)=I(\widetilde{b}^{-1}\cdot, \widetilde{b}^{-1}\cdot)=\widetilde{h}
\]
and the traceless part of the inverse of $B$ is $\widetilde{b}_{0}$ by construction, hence the corresponding quadratic differential is again $\widetilde{q}$. 	
	\item We need to check that at the singular points the eigenvalues of $B$ extend continuously and coincide. By Proposition \ref{prop:estimatesB}, $\widetilde{b}$ extends continuously at the singular points and the eigenvalues both tend to $L$. Since the eigenvalues of $B$ are the inverse of the eigenvalues of $B$, the claim follows.
	\item By the Gauss equation and point (2) and (3) we have
\[
	-C<\frac{-1}{\det(\widetilde{b})}=\frac{K_{\widetilde{h}}}{\det(\widetilde{b})}=K_{I}<0 \ .
\]
\end{enumerate}
\end{proof}

We have now all the ingredients to prove the main result of this paragraph:
\begin{prop}The map $\Phi_{H}$ is proper.
\end{prop}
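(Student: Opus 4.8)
The plan is to obtain properness as an application of the key Lemma \ref{lm:keylemmaproperness}. Let $M_n=\Phi_H(h_n,q_n)$ be a sequence converging in $\mathcal{G}\mathcal{H}^{0}_{\bm{\theta}}(\Sigma_{\mathpzc{p}})$; by Lemma \ref{lm:keylemmaproperness} it is enough to prove that the hyperbolic metrics $g_n$ conformal to the induced metrics of the $K$-surfaces $(\Sigma_{f(H)})_n\subset M_n$, with $f(H)=-4H^2$ as in Lemma \ref{lm:existenceKsurface}, stay in a compact subset of $\Tp$. Since $M_n$ converges, Theorem \ref{thm:parameterisationMink} gives that the parametrising data $(\widetilde h_n,\widetilde q_n)\in T\Tp$ converge to some $(\widetilde h,\widetilde q)$; in particular the third fundamental forms $\III_n=\widetilde h_n$ of the $K$-surfaces (which, in the Minkowski setting, are independent of the chosen convex surface) converge in $\Tp$. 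Because the area of $g_n$ is fixed by Gauss--Bonnet and the cone angles lie in $(0,\pi)^n$, where the Collar Lemma holds, a Mumford-type compactness for hyperbolic cone-metrics reduces the whole statement to a uniform lower bound on the systole of $g_n$.

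To control the systole I would trap the $K$-surface between barrier surfaces of controlled geometry. Applying Proposition \ref{prop:constructionsurfaces} to the converging data $(\widetilde h_n,\widetilde q_n)$ with a parameter $L$ chosen uniformly in $n$ (legitimate, since $\det(\widetilde h_n^{-1}\Re(\widetilde q_n))$ stays bounded along the convergent sequence and vanishes at the punctures) produces in each $M_n$ convex space-like surfaces $S_n^{\pm}$, orthogonal to the singular lines and umbilical there, whose induced metrics converge and whose Gauss curvatures lie in prescribed intervals. Choosing the parameters so that $\sup K_{S_n^-}<-4H^2<\inf K_{S_n^+}$, Lemma \ref{lm:mutualposition} places $S_n^+$ in the future and $S_n^-$ in the past of $(\Sigma_{f(H)})_n$. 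Since the surfaces involved are future-convex, the operator $E-tB$ of Lemma \ref{lm:principalcurvaturesMink} is expanding for $t>0$, so the induced metric is monotone non-decreasing along the future-directed normal flow; comparing nested convex Cauchy surfaces through the equidistant foliation then yields $\ell_{S_n^-}(\gamma)\le \ell_{(\Sigma_{f(H)})_n}(\gamma)\le \ell_{S_n^+}(\gamma)$ for every closed curve $\gamma$. As the metrics on $S_n^{\pm}$ converge, their systoles are uniformly bounded below, and the lower bound transfers to $(\Sigma_{f(H)})_n$, hence to $g_n$.

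I expect the main obstacle to be the construction of the \emph{past} barrier $S_n^-$, i.e. a convex surface with Gauss curvature everywhere below $-4H^2$ and with induced metric staying in a compact set. Proposition \ref{prop:constructionsurfaces} most naturally produces surfaces of curvature close to $0$, sitting in the far future; to reach curvature below $-4H^2$ one must take $L$ small, which is compatible with the convexity constraint $\det(\widetilde b)>0$ only when $\det(\widetilde h_n^{-1}\Re(\widetilde q_n))$ is not too negative, equivalently when the $K$-surfaces do not become too far from umbilical. This is exactly the heart of the matter: a failure of the systolic bound forces a principal curvature of $(\Sigma_{f(H)})_n$ to blow up, as $\det(B'_n)=4H^2$ is fixed, so the whole difficulty is an \emph{a priori bound on the principal curvatures of the $K$-surfaces, uniform along} $M_n\to M$. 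I would settle this by combining the trapping between $S_n^{\pm}$ with the Maximum Principle: flowing a barrier along the equidistant foliation until it first touches $(\Sigma_{f(H)})_n$ and applying the comparison of \cite{BBZAdSCMC} (and \cite[Lemma 3.12]{QiyuAdS} at the singular points) bounds the principal curvatures at the contact point, after which a global maximum-principle argument for the constant--Gauss--curvature equation propagates the bound.

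Once the principal curvatures are uniformly controlled, $B'_n$ has eigenvalues bounded away from $0$ and $\infty$, so the identity $I_n=\III_n((B'_n)^{-1}\cdot,(B'_n)^{-1}\cdot)$ makes the induced metric of the $K$-surface uniformly bi-Lipschitz to the convergent $\III_n$, which yields at once the compactness of $g_n$. By Lemma \ref{lm:keylemmaproperness} the sequence $(h_n,q_n)$ then converges up to subsequences, so $\Phi_H$ is proper. Combined with Proposition \ref{prop:continuity}, Proposition \ref{prop:injectivity} and the Invariance of Domain Theorem, this completes the proof of Theorem \ref{thm:newparameterisation}.
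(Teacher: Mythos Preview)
Your reduction to Lemma \ref{lm:keylemmaproperness} and the use of Proposition \ref{prop:constructionsurfaces} to manufacture barrier surfaces is exactly the paper's route, but you are making the argument much harder than it needs to be by insisting on a \emph{two-sided} trapping and a direct lower bound on the systole of $g_n$. The paper only constructs the \emph{future} barrier $S_n^+$, and that is enough.

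The point you are missing is this: to show that $g_n$ stays in a compact set of $\Tp$ it suffices to bound $\ell_{g_n}(\gamma)$ \emph{from above} for every simple closed curve $\gamma$. Indeed, if the hyperbolic metrics $g_n$ left every compact set, the systole would tend to~$0$ along some curve $\gamma_0$; by the Collar Lemma (valid since the cone angles are less than $\pi$) the length of any curve crossing $\gamma_0$ would then tend to $+\infty$. So an upper bound on all curve lengths already forces compactness. Such an upper bound comes immediately from the single inequality $\frac{1}{4H^2}g_n\le I_n$ obtained by placing a future barrier $S_n$ with $-4H^2<K_{I_n}<0$ above $(\Sigma_{f(H)})_n$ via Lemma \ref{lm:mutualposition}, together with the convergence of the explicitly constructed metrics $I_n$. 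For this barrier one takes $|L|$ \emph{large} in Proposition \ref{prop:constructionsurfaces}, which is always possible uniformly in $n$; no smallness condition on $\det(\widetilde h_n^{-1}\Re(\widetilde q_n))$ is required.

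Your past barrier $S_n^-$, by contrast, would need $\sup K_{S_n^-}<-4H^2$, i.e.\ $L^2+\det(\widetilde b_0)<\frac{1}{4H^2}$ at every point, while convexity demands $L^2+\det(\widetilde b_0)>0$. At the punctures $\det(\widetilde b_0)=0$, so you need $L^2<\frac{1}{4H^2}$; but at interior points $-\det(\widetilde b_0)$ can exceed $\frac{1}{4H^2}$, and then no admissible $L$ exists. You correctly flag this obstruction, but the proposed remedy (a Maximum-Principle bound on the principal curvatures of $(\Sigma_{f(H)})_n$ at a single tangency point, followed by a ``global maximum-principle argument for the constant--Gauss--curvature equation'') is not a proof: there is no evident elliptic equation for a single principal curvature of a $K$-surface that would propagate a one-point bound to a global one, and in fact controlling those principal curvatures is equivalent to the compactness you are trying to establish. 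Drop the past barrier and the principal-curvature detour; the upper bound on lengths from the future barrier alone closes the argument.
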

\begin{proof} We need to prove that if $M_{n}=\Phi_{H}(h_{n},q_{n})$ converges, then also $(h_{n}, q_{n}) \in T\Tp$ converges up to subsequences. By Theorem \ref{thm:parameterisationMink}, we know that the corresponding data $(\widetilde{h}_{n}, \widetilde{q}_{n}) \in T\Tp$ converge. By construction, each manifold $M_{n}$ contains a convex space-like surface $(S_{H})_{n}$ orthogonal to the singular locus with constant mean curvature $H$ and a convex space-like surface $(\Sigma_{f(H)})_{n}$ orthogonal to the singular locus with constant Gauss curvature $f(H)=-4H^{2}$. Let us denote by $g_{n}$ the hyperbolic metric in the conformal class of the induced metric on $(\Sigma_{f(H)})_{n}$. By Lemma \ref{lm:keylemmaproperness}, it is sufficient to show that the sequence $g_{n}$ is contained in a compact set of $\Tp$. \\
For every $L<0$ we define the objects $(\widetilde{b}_{0})_n$, $\widetilde{b}_{n}$, $I_{n}$ and $\III_{n}$ as above. Since the sequences $\widetilde{q}_{n}$ and $\widetilde{h}_{n}$ converge, there exists a constant $C'>0$ such that
\[
	-C'<\det((\widetilde{b}_{0})_n)\leq 0 \
\]
for all $n \in \mathbb{N}$. Fix $\epsilon>0$ such that $4H^{2}-\epsilon>0$. We choose $L<0$ so that
\[
	L<-\sqrt{\frac{1}{4H^{2}-\epsilon}+C'} \ .
\]
With this choice, the surfaces $S_{n}\subset M_{n}$ with the embedding data $(I_{n}, B_{n})$ (see Proposition \ref{prop:constructionsurfaces}) have Gauss curvature uniformly bounded from below:
\[
	-4H^{2}<-4H^{2}+\epsilon\leq K_{I_{n}}<0 \ .
\]
By Lemma \ref{lm:mutualposition}, the surfaces $S_{n} \subset M_{n}$ are contained in the future of the surfaces $(\Sigma_{f(H)})_{n}$. Since the metrics induced on future convex space-like surfaces increase when moving towards the future (\cite[Proposition 4.2]{mehdi}), we deduce that
\[
	\frac{1}{4H^{2}} g_{n} \leq I_{n} \ .
\]
Now, if the sequence $g_{n}$ were not contained in a compact set of $\Tp$, there would exist a simple closed geodesic $\gamma$ such that $\ell_{g_{n}}(\gamma)\to \infty$. But this not possible, since the sequence $\ell_{g_{n}}(\gamma)$ is bounded by $\ell_{I_{n}}(\gamma)$ and the sequence of metrics $I_{n}$ is clearly convergent by construction.
\end{proof}

\subsection{Foliations in CGHM Minkowsky manifolds with particles} A direct consequence of Theorem \ref{thm:newparameterisation} is the following:
\begin{cor}
Let $M$ be a CGHM Minkowski manifold with particles. Then for every $H<0$, there exists a convex space-like surface $S_{H}$ orthogonal to the singular lines with constant mean curvature $H$.
\end{cor}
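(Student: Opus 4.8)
The plan is to read the statement off directly from the surjectivity of the parametrisation map $\Phi_{H}$. Given a CGHM Minkowski manifold with particles $M$, I first observe that $M$ determines a well-defined isotopy class in the deformation space $\mathcal{G}\mathcal{H}^{0}_{\bm{\theta}}(\Sigma_{\mathpzc{p}})$. By Theorem \ref{thm:newparameterisation}, for every $H<0$ the map $\Phi_{H}:T\Tp \rightarrow \mathcal{G}\mathcal{H}^{0}_{\bm{\theta}}(\Sigma_{\mathpzc{p}})$ is a homeomorphism, and in particular it is surjective. Hence there exists a pair $(h,q)\in T\Tp$ with $\Phi_{H}(h,q)=M$.

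Next I would simply unwind the definition of $\Phi_{H}$. By construction, $\Phi_{H}(h,q)$ is the CGHM Minkowski manifold with particles containing a convex space-like surface orthogonal to the singular lines whose induced metric $I$ lies in the conformal class of $h$ and whose second fundamental form is $\II=\Re(q)+HI$. Such a surface has shape operator $B=I^{-1}\Re(q)+HE$, and since $I^{-1}\Re(q)$ is traceless we get $\trace(B)=2H$, so its mean curvature equals exactly $H$. Convexity and orthogonality to the particles were already established in verifying that $\Phi_{H}$ is well-defined (the future-convexity being a consequence of Lemma \ref{lm:est_princ_curv}). Identifying this surface inside $M=\Phi_{H}(h,q)$ yields the desired $H$-surface $S_{H}$.

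Since the substantive analytic content — existence of the conformal factor $u$ solving Equation (\ref{eq:PDE}), together with the continuity, injectivity and properness of $\Phi_{H}$ — has already been carried out in the proof of Theorem \ref{thm:newparameterisation}, there is no remaining obstacle: the corollary is a purely formal consequence of surjectivity. The one point worth emphasizing is that surjectivity \emph{alone} suffices for this existence statement; uniqueness of $S_{H}$ for fixed $H$ (which follows instead from injectivity, cf.\ Proposition \ref{prop:injectivity}) is not required here, though it is precisely what will later upgrade the family $\{S_{H}\}_{H<0}$ to a genuine foliation of $M$.
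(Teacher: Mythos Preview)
Your proposal is correct and matches the paper's approach exactly: the paper simply states the corollary as ``a direct consequence of Theorem \ref{thm:newparameterisation}'' without further argument, and your unwinding of the surjectivity of $\Phi_{H}$ is precisely the intended reasoning.
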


We deduce then from Lemma \ref{lm:existenceKsurface} also the existence of surfaces with constant Gauss curvature.
\begin{cor}Let $M$ be a CGHM Minkowski manifold with particles. Then for every $K<0$, there exists a convex space-like surface $\Sigma_{K}$ orthogonal to the singular lines with constant Gauss curvature $K$.
\end{cor}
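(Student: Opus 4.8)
The plan is to reduce this statement directly to the existence of constant mean curvature surfaces, already established in the preceding corollary as a consequence of Theorem \ref{thm:newparameterisation}, and then to transport it to a constant Gauss curvature surface by the explicit $H$-to-$K$ correspondence of Lemma \ref{lm:existenceKsurface}. The point is that the relation $K=-4H^{2}$ can be inverted: for a prescribed curvature $K<0$ there is a unique admissible $H<0$ realising it.

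Concretely, I would set $H=-\frac{1}{2}\sqrt{-K}$, the unique negative real number satisfying $-4H^{2}=K$. By the preceding corollary, $M$ contains a convex space-like surface $S_{H}$, orthogonal to the singular lines, with constant mean curvature $H$. I would then invoke Lemma \ref{lm:existenceKsurface}: the equidistant surface $\Sigma_{K}:=\psi^{d(H)}(S_{H})$, obtained by flowing $S_{H}$ along the future-directed normal flow for the oriented time $d(H)=\frac{1}{2H}<0$, has constant Gauss curvature $f(H)=-4H^{2}=K$. This yields a surface of the required constant curvature, and it remains only to confirm the two qualitative features demanded by the statement, namely orthogonality to the singular locus and convexity.

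Orthogonality is automatic: $\Sigma_{K}$ is produced from the orthogonal surface $S_{H}$ purely by the normal flow, which is precisely the mechanism used inside the proof of Lemma \ref{lm:existenceKsurface} to guarantee that the induced metric retains the cone angles $\bm{\theta}$. For convexity, I would combine the a priori pinching of the principal curvatures of $S_{H}$ from Lemma \ref{lm:est_princ_curv}, namely $2H<\mu\le H$ and $H\le\mu'<0$, with the evolution formula of Lemma \ref{lm:principalcurvaturesMink}(2): at time $t=\frac{1}{2H}$ the quantities $\mu t$ and $\mu' t$ lie in $[\frac{1}{2},1)$ and $(0,\frac{1}{2}]$ respectively, so the denominators $1-\mu t$ and $1-\mu' t$ stay strictly positive and the evolved principal curvatures $\mu_{t},\mu'_{t}$ remain negative, i.e.\ $\Sigma_{K}$ is future-convex. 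I do not expect a genuine obstacle here; the corollary is an essentially formal consequence of the machinery already assembled, and the only substantive verification is this elementary sign check, which also confirms that no focal point is reached along the flow before time $d(H)$.
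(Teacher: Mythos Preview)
Your proposal is correct and follows exactly the route the paper indicates: the paper simply states that the corollary is deduced from Lemma \ref{lm:existenceKsurface} applied to the $H$-surface supplied by the preceding corollary, and you have spelled out the inversion $H=-\tfrac{1}{2}\sqrt{-K}$ together with the routine verifications of orthogonality and convexity (the latter already implicit in the well-definedness of the normal flow on $[\tfrac{1}{2H},0]$ noted in the proof of Lemma \ref{lm:existenceKsurface}).
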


The aim of this section is proving that these two families $\{S_{H}\}_{H<0}$ and $\{\Sigma_{K}\}_{K<0}$ provide a foliation of the whole manifold $M$. Let us start with the family of surfaces with constant Gauss curvature.
\begin{teo}\label{thm:KfoliationMink}
Every CGHM Minkowski manifold with particles $M$ has a unique foliation by constant Gauss curvature surfaces $\{\Sigma_{K}\}_{K<0}$ orthogonal to the singular lines.
\end{teo}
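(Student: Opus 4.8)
The plan is to show that the leaves $\{\Sigma_K\}_{K<0}$ are pairwise disjoint, depend monotonically and continuously on $K$, and together exhaust $M$; uniqueness of the foliation will then be formal. First I would record that for each $K<0$ there is a \emph{unique} constant Gauss curvature surface $\Sigma_K$ orthogonal to the singular lines. Existence is the content of the corollary above (apply $\Phi_H$ of Theorem \ref{thm:newparameterisation} with $H=-\tfrac{1}{2}\sqrt{-K}$ and pass to the equidistant surface of Lemma \ref{lm:existenceKsurface}); uniqueness follows by the same maximum-principle argument used in Proposition \ref{prop:injectivity}: if two $K$-surfaces were distinct, flowing one along the future normal flow until it becomes tangent to the other from the future and comparing Gauss curvatures at the contact point (using that the Gauss curvature strictly increases along the future normal flow, Lemma \ref{lm:principalcurvaturesMink}) yields a contradiction, at regular points via the classical maximum principle and at singular points via its umbilical version.

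Next I would establish the two structural properties. \emph{Monotonicity and disjointness} are immediate from Lemma \ref{lm:mutualposition}: since each $\Sigma_K$ has constant Gauss curvature, $K_1>K_2$ forces $\inf K_{\Sigma_{K_1}}=K_1>K_2=\sup K_{\Sigma_{K_2}}$, so $\Sigma_{K_1}$ lies strictly in the future of $\Sigma_{K_2}$; in particular the leaves are disjoint and time-ordered by $K$. \emph{Continuity} of $K\mapsto\Sigma_K$ I would deduce from the continuity of the parametrisation: writing $\Sigma_K$ as the equidistant surface of the $H$-surface $S_H$ and combining Proposition \ref{prop:continuity} with Lemma \ref{lm:existenceKsurface}, the embedding data of $\Sigma_K$ vary continuously with $K$. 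Together with the normal-flow description of Lemma \ref{lm:principalcurvaturesMink}, this shows that the union $\mathcal{U}=\bigcup_{K<0}\Sigma_K$ is open and carries a genuine product structure, i.e.\ that the leaves honestly foliate $\mathcal{U}$.

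It then remains to prove $\mathcal{U}=M$, which is the heart of the argument. Fix $x\in M$; since each $\Sigma_K$ is a Cauchy surface (a compact space-like surface in a globally hyperbolic spacetime), $x$ lies on, to the future of, or to the past of $\Sigma_K$, and by monotonicity this relative position switches at most once as $K$ increases. It therefore suffices to exhibit leaves lying entirely to the future of $x$ and leaves lying entirely to its past. For the future I would use the barrier surfaces of Proposition \ref{prop:constructionsurfaces}: for every small $\varepsilon>0$ the construction produces a convex space-like surface $S$ in $M$, orthogonal to and umbilical at the singular lines, with $-\varepsilon<K_S<0$; as $\varepsilon\to 0$ these escape to future infinity (matching the scaling $I'=-\tfrac{1}{K}g$ of the induced metric, which blows up as $K\to 0^-$), so for $\varepsilon$ small $x$ lies in the past of such an $S$, and then Lemma \ref{lm:mutualposition} forces every $\Sigma_K$ with $K>\sup K_S$ to lie in the future of $x$. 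Dually, as $K\to-\infty$ the same scaling shows the leaves collapse onto the initial singularity of $M$, so every fixed point is eventually left to the future of $\Sigma_K$. An intermediate-value argument in $K$, using the continuity established above, then produces the unique value of $K$ with $x\in\Sigma_K$, whence $\mathcal{U}=M$.

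Finally, uniqueness of the foliation is formal: the leaves of any foliation of $M$ by constant Gauss curvature surfaces orthogonal to the singular lines are constant-$K$ surfaces, each of which must coincide with the unique $\Sigma_K$, so the foliation agrees with $\{\Sigma_K\}_{K<0}$. I expect the main obstacle to be the exhaustion step, and specifically the asymptotic control of the leaves as $K\to 0^-$ and $K\to-\infty$: one must guarantee, via the barriers of Proposition \ref{prop:constructionsurfaces} and the future completeness of $M$, that the surfaces genuinely reach future infinity and the initial singularity rather than accumulating on some interior Cauchy surface.
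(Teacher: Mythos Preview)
Your uniqueness and monotonicity steps are correct and match the paper. There are, however, two genuine gaps.

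\textbf{Continuity.} Proposition~\ref{prop:continuity} shows that $\Phi_H$ is continuous in the variable $(h,q)\in T\Tp$ for \emph{fixed} $H$; it says nothing about how the surface $S_H\subset M$ moves as $H$ varies with $M$ fixed, and continuous dependence of the abstract embedding data $(I_K,B_K)$ on $K$ would in any case not locate $\Sigma_K$ inside $M$. The paper argues directly: for a monotone sequence $K_n\to K$ the leaves $\Sigma_{K_n}$ are nested, and the boundary $\Sigma_\infty$ of $\bigcup_n I^+(\Sigma_{K_n})$ is, by the Barbot--B\'eguin--Zeghib compactness results for convex Cauchy surfaces, itself a convex space-like surface. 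A length bound on the underlying hyperbolic metrics (using that induced metrics on future-convex surfaces increase towards the future) keeps them in a compact set of $\Tp$, and Schlenker's dichotomy for limits of convex embeddings then upgrades the convergence to $C^\infty$ off the singular locus, ruling out a pleated limit. Only then can one read off that $\Sigma_\infty$ has constant curvature $K$ and invoke your uniqueness to get $\Sigma_\infty=\Sigma_K$. This compactness/regularity input is what your outline is missing.

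\textbf{Exhaustion.} The heuristic ``large induced metric forces the surface far into the future'' is not justified, and you have not shown that the barriers of Proposition~\ref{prop:constructionsurfaces} leave every compact set as $\varepsilon\to 0$. The idea you are missing is the Gauss--Bonnet formula for cone surfaces,
\[
\mathrm{Area}(\Sigma_K)=\frac{2\pi}{K}\,\chi(\Sigma,\bm{\theta}).
\]
If the leaves accumulated on an interior convex space-like surface as $K\to -\infty$, their areas would stay bounded below by a positive constant, contradicting $\mathrm{Area}(\Sigma_K)\to 0$; as $K\to 0^-$ the areas diverge, whereas accumulation in the past of a fixed convex surface would bound them above (again since induced metrics increase along the future normal flow). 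This one computation replaces the asymptotic control you were worried about and closes both ends of the foliation.
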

\begin{proof}
We already know that in $M$ we can find a family of constant Gauss curvature surfaces $\Sigma_{K}$ for $K<0$. Let us first prove uniqueness. Suppose that $M$ contains two convex space-like surfaces $\Sigma_{K}$ and $\Sigma_{K}'$ orthogonal to the singular lines with constant Gauss curvature $K$. Up to changing the role of $\Sigma_{K}$ and $\Sigma_{K}'$ only three mutual position are possible: $\Sigma_{K}$ is disjoint from $\Sigma_{K}'$ in the future of $\Sigma_{K}'$; $\Sigma_{K}$ is tangent to $\Sigma_{K}'$ and lies in the future of $\Sigma_{K}'$; or $\Sigma_{K}$ and $\Sigma_{K}'$ intersect transversely. In any case, since the surfaces equidistant to $\Sigma_{K}'$ in the future provide a well-defined foliation of the future of $\Sigma_{K}'$, there exists a time $t>0$ such that the surface $\psi^{t}(\Sigma_{K}')$ obtained by pushing $\Sigma_{K}'$ along the normal flow is tangent to $\Sigma_{K}$ at a point $p$ and lies in the future of $\Sigma_{K}$. We distinguish two cases:
\begin{itemize}
	\item if $p$ is a regular point, by the Maximum Principle and Lemma \ref{lm:principalcurvaturesMink} we deduce that
\[
	K \geq \ K_{\psi^{t}(\Sigma_{K})}(p) > K
\]
	and we obtain a contradiction.
	\item If $p$ is a singular point, since $K$-surfaces are umbilical at the intersection with the singular lines, we can apply again Lemma \ref{lm:principalcurvaturesMink} and the Maximum Principle for umbilical surfaces (\cite[Lemma 3.12]{QiyuAdS}) and conclude as above.
\end{itemize}
We are left to prove that the family $\{\Sigma_{K}\}_{K<0}$ foliates the manifold. Let us first prove that the union of $\Sigma_{K}$ over all $K<0$ foliates a domain in $M$, called $\Omega$. To this aim, it is sufficient to show that if $K_{n}$ converges to $K$, then $\Sigma_{K_{n}}$ converges to $\Sigma_{K}$ uniformly. We will actually prove more: we will show that the convergence is also smooth outside the singular lines. Without loss of generality we can suppose that $K_{n}$ is decreasing and converging to $K$. This implies that the surface $\Sigma_{K_{n}}$ lies in the future of $\Sigma_{K_{n+1}}$ and they all lie in the future of the surface $\Sigma_{K'}$ for some $K'<K$. This kind of sequence of convex space-like surfaces has already been studied in \cite{barbotzeghib}. In particular, the authors proved (Theorem 3.6 and Corollary 3.8 in \cite{barbotzeghib}) that the boundary of the closure of the union of $I^{+}(\Sigma_{K_{n}})$ over all $n$ is a convex space-like surface $\Sigma_{\infty}$. Let us denote by $\sigma_{\infty}:\Sigma_{\mathpzc{p}} \rightarrow M$ the embedding such that $\sigma_{\infty}(\Sigma_{\mathpzc{p}})=\Sigma_{\infty}$. Let $\psi_{n}:\Sigma_{\infty}\rightarrow \Sigma_{K_{n}}$ be the homeomorphisms obtained by following the normal flow. We define $\sigma_{n}=\psi_{n}\circ \sigma_{\infty}$. We claim that $\sigma_{n}$ converges to $\sigma_{\infty}$ in the $C^{\infty}$ topology outside the singular locus and that $\Sigma_{\infty}$ has constant Gauss curvature $K$. The induced metrics on $\Sigma_{n}$ can be written as
\[
	g_{n}=\frac{1}{|K_{n}|}h_{n}
\]
where $h_{n}$ is a hyperbolic metric with cone singularities at $\mathpzc{p}$ of angles $\bm{\theta}$, since all surfaces $\Sigma_{K_{n}}$ are orthogonal to the singular locus. Since the metrics induced on convex space-like surfaces embedded in Minkowski manifold increase when moving towards the future (\cite[Proposition 4.2]{mehdi}) we have that
\[
	\ell_{g_{n}}((\sigma_{n}(\gamma))\leq \ell_{g_{1}}(\sigma_{1}(\gamma))   \ ,
\]
for every simple closed curve $\gamma$ on $\Sigma_{\mathpzc{p}}$. Therefore, we have
\[
	\ell_{h_{n}}(\sigma_{n}(\gamma))=\sqrt{|K_{n}|}\ell_{g_{n}}(\sigma_{n}(\gamma)) \leq
    \sqrt{\frac{|K_{n}|}{|K_{1}|}}\ell_{h_{1}}(\sigma_{1}(\gamma)) \ ,
\]
which implies that the metrics $\sigma_{n}^{*}h_{n}$ are contained in a compact set of $\Tp$. Thus, up to extracting a subsequence, the metrics $\sigma_{n}^{*}h_{n}$ converges to a hyperbolic metric $h_{\infty}\in \Tp$ smoothly outside the singular locus. Following again \cite{barbotzeghib}, for each regular point $x \in \Sigma_{\infty}$ there exists a sequence $x_{n}\in \Sigma_{K_{n}}$, such that $x_{n}$ converges to $x$ and the unit normal of $\Sigma_{K_{n}}$ at $x_{n}$ converges to the unit normal of $\Sigma_{\infty}$ at $x$. This implies that the sequence of $1$-jets of $\sigma_{n}$ converges. Now, from the proof of Theorem 5.6 in \cite{schlenkersurfconv} it follows that if $\sigma_{n}$ were not convergent to $\sigma_{\infty}$ in the $C^{\infty}$ topology, the surface $\Sigma_{\infty}$ would be pleated along a geodesic $\gamma$ (which may be a geodesic segment between singular points) and tangent to a light-like plane somewhere outside $\gamma$. This implies that $\Sigma_{\infty}$ admits a light-like unit normal, which contradicts the fact that $\Sigma_{\infty}$ is space-like. Therefore, $\sigma_{n}$ converges to $\sigma_{\infty}$ in the $C^{\infty}$ topology outside the singular locus and the induced metric on $\Sigma_{\infty}$ is
\[
	g_{\infty}=\frac{1}{|K|}(\sigma_{\infty})_{*}h_{\infty} \ ,
\]
which has constant curvature $K$. Then $\Sigma_{\infty}$ must coincide with $\Sigma_{K}$ by uniqueness.	\\
We now claim that $\Omega=M$. It is sufficient to show that if $K_{n}\to -\infty$, then the union of $I^{+}(\Sigma_{K_{n}})$ is the whole manifold and if $K_{n}'\to 0$ then the union of $I^{-}(\Sigma_{K_{n}'})$ is exactly $M$. We will do the proof for the first case, the other being analogous. If the union $D$ of $I^{+}(\Sigma_{K_{n}})$ does not coincide with $M$, then the boundary $\partial D$ is a convex space-like surface. In particular, the area of $\Sigma_{K_{n}}$ cannot tend to $0$. On the other hand, the Gauss-Bonnet formula for surfaces with cone singularities (\cite[Proposition 1]{Tro}) shows that
\[
   Area(\Sigma_{K_{n}})=\frac{2\pi}{K_{n}}\chi(\Sigma,\bm{\theta}) \ ,
\]
and this gives a contradiction.\\
Moreover, by Lemma \ref{lm:mutualposition}, the surfaces $\Sigma_{K}$ and $\Sigma_{K'}$ are disjoint from each other for all $K\not=K'\in(-\infty,0)$. This completes the proof.
\end{proof}

We deduce that also the family $\{S_{H}\}_{H<0}$ must foliate the whole manifold.
\begin{teo}\label{prop:HfoliationMink}
Every CGHM Minkowski manifold with particles $M$ has a unique foliation by constant mean curvature surfaces $\{S_{H}\}_{H<0}$ orthogonal to the singular lines.
\end{teo}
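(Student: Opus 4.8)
The plan is to assemble the statement from the pieces already established. Theorem~\ref{thm:newparameterisation} provides, in every CGHM Minkowski manifold $M$ with particles, a unique convex $H$-surface $S_{H}$ orthogonal to the singular lines for each $H\in(-\infty,0)$, and Corollary~\ref{cor:uniqueHfoliationMink} reduces the theorem to showing that the family $\{S_{H}\}_{H<0}$ is \emph{a} foliation. I would therefore prove three things: the leaves are pairwise disjoint and monotonically ordered, their union is all of $M$, and they vary continuously in $H$. For the ordering I would show that $H_{1}<H_{2}$ forces $S_{H_{2}}$ to lie strictly in the future of $S_{H_{1}}$, by the comparison scheme of Proposition~\ref{prop:injectivity}. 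Since the equidistant surfaces $\psi^{t}(S_{H_{2}})$ foliate the future of $S_{H_{2}}$, if $S_{H_{1}}$ met that future there would be a contact point $p$ where some $\psi^{t_{0}}(S_{H_{2}})$ with $t_{0}\ge 0$ is tangent to $S_{H_{1}}$ from the future; the Maximum Principle (\cite{BBZAdSCMC}, and \cite[Prop.~3.6]{QiyuAdS} at the singular points, which are umbilic by Proposition~\ref{prop:estimatesB}) gives mean curvature $\le H_{1}$ at $p$, while monotonicity of the mean curvature along the normal flow (Lemma~\ref{lm:principalcurvaturesMink}) gives $\ge H_{2}$, whence $H_{2}\le H_{1}$, a contradiction. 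A direct application of the Maximum Principle at a tangency then rules out that the leaves touch, giving strict disjointness and the ordering.

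For exhaustion toward the future I would use the equidistant constant Gauss curvature surface. By Lemma~\ref{lm:existenceKsurface} the surface $\Sigma_{-4H^{2}}$ lies in the past of $S_{H}$, hence $I^{+}(S_{H})\subseteq I^{+}(\Sigma_{-4H^{2}})$. If some $x\in M$ were in the future of \emph{every} leaf, then $x\in I^{+}(\Sigma_{-4H^{2}})$ for all $H<0$, that is $x\in I^{+}(\Sigma_{K})$ for every $K<0$; but $x$ lies on a leaf $\Sigma_{K(x)}$ of the constant Gauss curvature foliation of Theorem~\ref{thm:KfoliationMink}, and for $K\in(K(x),0)$ the leaf $\Sigma_{K}$ is in the future of $x$, so $x\in I^{-}(\Sigma_{K})$, a contradiction. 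Thus no point is in the future of all the $S_{H}$.

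The delicate point, and what I expect to be the main obstacle, is exhaustion toward the past. Here the natural comparison surface $\Sigma_{-4H^{2}}$ sits \emph{below} $S_{H}$ and controls $I^{-}(S_{H})$ only from below, so the argument above has no dual; moreover the clean area estimate that settles the past for the $K$-surfaces in Theorem~\ref{thm:KfoliationMink} (where $\mathrm{Area}(\Sigma_{K})=2\pi\chi(\Sigma,\bm{\theta})/K\to 0$) has no analogue, since Gauss--Bonnet and the Gauss equation only give $2\pi\chi(\Sigma,\bm{\theta})=-H^{2}\mathrm{Area}(S_{H})+\int_{S_{H}}\lambda^{2}\,dA_{I}$, which does not bound $\mathrm{Area}(S_{H})$ from above. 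I would instead argue by the limiting procedure already used for the $K$-surfaces: suppose $\bigcup_{H<0}I^{+}(S_{H})\neq M$; then by the convex-surface convergence results of \cite{barbotzeghib} the boundary of this union is a convex space-like surface $S_{-\infty}$, onto which the surfaces $S_{H_{n}}$ with $H_{n}\to-\infty$ are squeezed. Their induced metrics then remain in a compact subset of $\Tp$, so by the argument of \cite{schlenkersurfconv} the convergence $S_{H_{n}}\to S_{-\infty}$ is smooth off the singular locus and $S_{-\infty}$ is a genuine space-like surface; its mean curvature would then equal $\lim_{n}H_{n}=-\infty$, which is absurd. Hence $\bigcup_{H<0}I^{+}(S_{H})=M$, i.e.\ no point lies in the past of all leaves.

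Finally, continuity of $H\mapsto S_{H}$ (so the leaves fit together without gaps) follows from the same convergence theorem: for $H_{n}\to H$ finite, the surfaces $S_{H_{n}}$ are trapped between $S_{H-\delta}$ and $S_{H+\delta}$, so by monotonicity of the induced metrics along the normal flow (\cite[Prop.~4.2]{mehdi}) their metrics are precompact in $\Tp$, the limit is a convex space-like $H$-surface, and by the uniqueness established in Proposition~\ref{prop:injectivity} it equals $S_{H}$. Combining the ordering, the two exhaustion statements and continuity shows that every point of $M$ lies on exactly one leaf and that the leaves vary continuously; hence $\{S_{H}\}_{H<0}$ is a foliation, unique by Corollary~\ref{cor:uniqueHfoliationMink}.
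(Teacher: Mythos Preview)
Your ordering/disjointness argument via the Maximum Principle is fine and is actually more direct than the route in the paper (which deduces disjointness from the $K$-surface correspondence). Your ``future exhaustion'' step, showing that no point lies in $I^{+}(S_{H})$ for all $H$, is essentially the paper's argument for $D^{-}=M$.

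The genuine gap is exactly where you flag it: the past exhaustion. Your proposed contradiction hinges on upgrading the convergence $S_{H_{n}}\to S_{-\infty}$ (with $H_{n}\to-\infty$) to \emph{smooth} convergence off the singular locus, and then reading off that the mean curvature of the limit is $-\infty$. But the results you cite from \cite{barbotzeghib} and \cite{schlenkersurfconv} require a priori control: in the paper's proof of Theorem~\ref{thm:KfoliationMink} this comes from writing the induced metric on $\Sigma_{K_{n}}$ as $|K_{n}|^{-1}h_{n}$ with $h_{n}$ hyperbolic, and bounding lengths to keep $h_{n}$ in a compact of $\Tp$. For the $H$-surfaces you have no such normal form; your assertion that ``their induced metrics remain in a compact subset of $\Tp$'' is neither stated precisely (the metrics are not hyperbolic) nor justified. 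Worse, the very conclusion you want --- smooth ($C^{2}$) convergence --- would force the second fundamental forms, hence the mean curvatures, to converge to something finite, so you would be invoking a compactness that is incompatible with $H_{n}\to-\infty$ in order to derive the contradiction. The argument as written is circular.

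The paper sidesteps this entirely by exploiting the explicit equidistance relation (Lemma~\ref{lm:existenceKsurface}): $S_{H}$ sits at signed time-like distance $d(H)=\tfrac{1}{2H}$ from $\Sigma_{-4H^{2}}$, and $|d(H)|\to 0$ as $H\to-\infty$. Since the $\Sigma_{K}$'s already foliate $M$ (Theorem~\ref{thm:KfoliationMink}) and march into the past as $K\to-\infty$, the $S_{H}$'s, being uniformly close to them for $|H|$ large, must also exhaust the past. The same device (writing the $S_{H}$-embedding as $\psi^{d(H)}\circ\sigma_{K}$) gives continuity in $H$ for free, without any compactness argument on the $H$-surfaces themselves. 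I would recommend you replace your past-exhaustion and continuity steps by this equidistance argument; the rest of your outline then goes through.
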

\begin{proof}Uniqueness was proved in Corollary \ref{cor:uniqueHfoliationMink}.\\
We first prove that the union of $S_{H}$ over $H<0$ foliates a domain in $M$, say $\Omega$. To this aim it is sufficient to show that if $H_{n}$ is a sequence converging to $H \in (-\infty, 0)$, then the surface $S_{H_{n}}$ converges to $S_{H}$. Without loss of generality we can suppose that $H_{n}$ is decreasing and converges to $H$. Let $\Sigma_{K_{n}}$ and $\Sigma_{K}$ be the space-like surfaces, orthogonal to the singular lines, with constant Gauss curvature $K_{n}$ and $K$ respectively, from which the surfaces $S_{H_{n}}$ and $S_{H}$ are constructed. This means that $S_{H_{n}}$ is in the future of $\Sigma_{K_{n}}$ at signed time-like distance $d(H_{n})=1/2H_{n}$. Let us denote by $\sigma_{n}:\Sigma_{\mathpzc{p}} \rightarrow M$ the embedding such that $\sigma_{n}(\Sigma_{\mathpzc{p}})=\Sigma_{K_{n}}$ and similarly $\sigma_{\infty}:\Sigma_{\mathpzc{p}}\rightarrow M$ such that $\sigma_{\infty}(\Sigma_{\mathpzc{p}})=\Sigma_{K}$, as described in the proof of Theorem \ref{thm:KfoliationMink}. By Theorem \ref{thm:KfoliationMink}, we know that $\sigma_{n}$ converges to $\sigma_{\infty}$ uniformly everywhere and smoothly outside the singular locus. Now, by construction the following map $f_{n}:\Sigma_{\mathpzc{p}} \rightarrow M$ is an embedding such that $f_{n}(\Sigma_{\mathpzc{p}})=S_{H_{n}}$:
\[
	f_{n}=\psi^{d(H_{n})}\circ \sigma_{n} \ ,
\]
where $\psi^{t}$ denotes the translation along the normal flow. Since $\sigma_{n}$ converges to $\sigma_{\infty}$ uniformly and the surfaces are future-convex, then for every $q \in \Sigma_{\mathpzc{p}}$, the unit normal of $\Sigma_{K_{n}}$ at $\sigma_{n}(q)$ converges to the unit normal of $\Sigma_{K}$ at $\sigma_{\infty}(q)$ (if $q$ is a singular point, the unit normal is the direction of the singular line). This, together with the continuity of the function $d$, implies that $\psi^{d(H_{n})}\circ\sigma_{n}$ converges to $\psi^{d(H)}\circ\sigma_{\infty}$ uniformly, and this proves the claim. \\
We are now left to prove that $\Omega=M$. Let us suppose by contradiction that
\[
	D^{-}=\bigcup_{H<0}\overline{I^{-}(S_{H})} \subsetneq M \ .
\]
Then there exists a point $p \in M\setminus D^{-}$. Since the family of $K$-surfaces $\Sigma_{K}$ foliates $M$, there exists $K<0$ such that $p \in \Sigma_{K}$. Now, by definition of $\Sigma_{K}$, the surface $S_{H}$ with constant mean curvature $H=f^{-1}(K)$ lies in the future of $\Sigma_{K}$, hence $p \in I^{-}(S_{H})$ and we get a contradiction. \\
Viceversa, if
\[
	D^{+}=\bigcup_{H<0}\overline{I^{+}(S_{H})} \subsetneq M ,
\]
the surface $S^{+}=\partial D^{+}$ would be a convex space-like surface embedded in $M$ (see \cite{barbotzeghib}). By studying the functions $f$ and $d$ defined in Lemma \ref{lm:existenceKsurface}, it is easy to see that for every $\epsilon>0$ there exist $K_{\epsilon}<0$ such that for every $K<K_{\epsilon}$, the constant Gauss curvature surface $\Sigma_{K}$ is at time-like distance less than $\epsilon$ from the corresponding constant mean curvature surface $S_{f^{-1}(K)}$. Let us choose $\epsilon>0$ such that there exists a point $p \in M \setminus D^{+}$ at time-like distance bigger than $2\epsilon$ from $S^{+}$ that lies on a surface $\Sigma_{K}$ for $K<K_{\epsilon}$. The existence of such $\epsilon$ is guaranteed by the fact that the family $\{\Sigma_{K}\}_{K<0}$ foliates the whole manifold and $\Sigma_{K}$ is in the past of $\Sigma_{K'}$ if $K<K'$. This gives the contradiction: the surface $\Sigma_{H}$ corresponding to the surface $\Sigma_{K}$, on which $p$ lies, is not entirely contained in $D^{+}$ by construction.\\
Moreover, note that $S_H$ can be constructed from the constant Gauss curvature surface $\Sigma_{f(H)}$ as the equidistant surface at signed time-like distance $d(f^{-1}(K))$ in the future. Combined with the uniqueness of $S_H$ for each $H<0$ (see Corollary \ref{cor:uniqueHfoliationMink}) and the fact that $\Sigma_{K}$ and $\Sigma_{K'}$ are disjoint from each other for all $K\not=K'\in(-\infty,0)$, it follows that the surfaces $S_{H}$ and $S_{H'}$ are disjoint from each other for all $H\not=H'\in(-\infty,0)$. This completes the proof.
\end{proof}

\section{The anti-de Sitter case}\label{sec:AdS}





This section is dedicated to the proof of the following result:

\begin{teo}\label{mainteoAdS}Let $M$ be a CGHM AdS manifold with particles. For every $H\in (-\infty, +\infty)$, there exists a unique space-like surface $S_{H}$ embedded in $M$, orthogonal to the singular lines and with constant mean curvature $H$. Moreover, the surfaces $\{S_{H}\}_{H \in \R}$ provide a foliation of the manifold $M$.
\end{teo}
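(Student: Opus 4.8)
The plan is to deduce the theorem from the existence and uniqueness of the foliation by constant Gauss curvature surfaces in CGHM AdS manifolds with particles, established in \cite{QiyuAdS}, by showing that every constant mean curvature surface is equidistant to a unique $K$-surface, with $H$ an explicit function of $K$. First I would record the behaviour of the embedding data under the normal flow, exactly as in Lemma \ref{lm:principalcurvaturesMink} but for the curved ambient space. Since the normal geodesics are time-like and the ambient sectional curvature is $\Sec(X)=-1$, the relevant Jacobi fields are governed by $\cos$ and $\sin$, so that along the future-directed unit normal flow $\psi^{t}$ the induced metric and shape operator of $\psi^{t}(S)$ are $I_{t}=I((\cos t\,E-\sin t\,B)\cdot,(\cos t\,E-\sin t\,B)\cdot)$ and $B_{t}=(\cos t\,E-\sin t\,B)^{-1}(\sin t\,E+\cos t\,B)$; in particular a principal curvature $\mu$ evolves by the addition formula $\mu_{t}=\tan(\arctan\mu+t)$. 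As in the flat case, the equidistant surfaces stay orthogonal to the singular lines and keep the same cone angles, the principal curvatures extend continuously to the value at the punctures, and each principal curvature (hence the mean curvature) is strictly increasing in $t$ as long as the flow is non-degenerate.

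The core step is the AdS analogue of Lemma \ref{lm:existenceKsurface}: if $\Sigma_{K}\subset M$ is a convex $K$-surface orthogonal to the singular lines with $K<-1$, then its equidistant surface at the constant time-like distance $t(K)=\arctan\!\big(1/\sqrt{-1-K}\big)$ has constant mean curvature $H=\mathcal{H}(K)$. I would prove this from the principal-curvature evolution: writing $c=-1-K=\det B>0$ (by the Gauss equation $\det B=\Sec(X)-K=-1-K$), the sum of principal curvatures of $\Sigma_{K}$ is a priori non-constant, but a direct computation shows that at the distance $t(K)$ the dependence of the mean curvature of $\psi^{t(K)}(\Sigma_{K})$ on this sum cancels identically, leaving $H=\mathcal{H}(K)=\tfrac{-K-2}{2\sqrt{-K-1}}$. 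The function $\mathcal{H}$ is a continuous strictly monotone bijection of $(-\infty,-1)$ onto $(-\infty,+\infty)$, with $\mathcal{H}(-2)=0$ corresponding to the maximal surface. Combined with the $K$-surface foliation of \cite{QiyuAdS}, this yields, for every $H\in\R$, a surface $S_{H}=\psi^{t(K(H))}(\Sigma_{K(H)})$ of constant mean curvature $H$ orthogonal to the singular lines; to make the flow legitimate over the whole distance $t(K)$ one first establishes an a priori bound on the principal curvatures of $\Sigma_{K}$, in the spirit of Lemma \ref{lm:est_princ_curv}, so that $\arctan\mu+t$ never reaches the focusing value $\pi/2$.

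Uniqueness of $S_{H}$ I would obtain by the maximum principle argument of Proposition \ref{prop:injectivity}. If $M$ contained two distinct $H$-surfaces orthogonal to the singular lines, then, using that the future equidistant surfaces of one of them foliate its future, there is a first time $t_{0}>0$ at which the flowed surface is tangent to the other from the future. At a regular tangency point the classical maximum principle (\cite{BBZAdSCMC}) gives $H_{t_{0}}(p)\le H$, while the strict monotonicity of the mean curvature along the future flow gives $H<H_{t_{0}}(p)$, a contradiction; at a singular tangency point one argues identically, the surfaces being umbilical there (Proposition \ref{prop:estimatesB}) so that the maximum principle for umbilical surfaces (\cite[Proposition 3.6]{QiyuAdS}) applies. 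Equivalently, two $H$-surfaces would flow to two $K$-surfaces sharing the value $K=\mathcal{H}^{-1}(H)$, contradicting the uniqueness of $K$-surfaces.

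It remains to prove that $\{S_{H}\}_{H\in\R}$ foliates $M$, which I regard as the main obstacle and which I would treat as in Theorem \ref{prop:HfoliationMink}. Continuity of the $K$-surface foliation (\cite{QiyuAdS}) together with the continuity of the flow and of $t(K)$ shows that $H\mapsto S_{H}$ is continuous, so that $\bigcup_{H}S_{H}$ is an open foliated domain $\Omega$ and, by uniqueness, the leaves are pairwise disjoint. The delicate point is that $\Omega=M$: since the $K$-surfaces only foliate the complement of the convex core, one must check that their equidistant images sweep through the convex core and that, as $H\to\pm\infty$ (equivalently $K\to-1^{-}$ and $K\to-\infty$), the surfaces $S_{H}$ escape every compact set and accumulate on the two boundary components of $M$. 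This is where the real work lies; I would control the limits through the asymptotics of $t(K)$ and $\mathcal{H}(K)$ and through a convex-surface convergence argument analogous to the one used for the $K$-surface foliation in Theorem \ref{thm:KfoliationMink} and in \cite{barbotzeghib}, concluding that no leaf is left out and hence that the constant mean curvature surfaces foliate all of $M$.
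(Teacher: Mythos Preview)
Your overall strategy is the paper's: push $K$-surfaces along the normal flow to get $H$-surfaces, identify the explicit bijection $K\leftrightarrow H$, and then argue foliation and uniqueness. Two points deserve comment.

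First, the a priori principal-curvature bound you propose for $\Sigma_{K}$ (``so that $\arctan\mu+t$ never reaches $\pi/2$'') is unnecessary. In the paper the $K$-surfaces are past-convex, with both principal curvatures positive, and one flows into the \emph{past} of $\Sigma_K$ by $-d(K)$; in that direction $\mu\tan t<0$ and the flow is non-degenerate for all $t<0$ with no estimate needed. Your concern arises only if you flow a past-convex surface towards the future, which one should simply avoid.

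Second, your uniqueness argument has a genuine gap in the AdS setting. You import Proposition~\ref{prop:injectivity} verbatim: assume two $H$-surfaces, use that ``the future equidistant surfaces of one of them foliate its future'', and find a tangency. In Minkowski this works because Lemma~\ref{lm:est_princ_curv} forces every $H$-surface to be strictly future-convex, so the future flow never degenerates. In AdS this fails: for $H>0$ the principal curvatures at the singular points equal $H>0$, the surface is past-convex there, and its future equidistant surfaces may focus before reaching the other $H$-surface. Your fallback (``equivalently, two $H$-surfaces would flow to two $K$-surfaces'') has the same defect, since that reverse flow also requires curvature control on the arbitrary $H$-surface that you have not established. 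The paper sidesteps this by reversing the logical order: it first proves that the constructed family $\{S_{H}\}$ foliates $M$ (Proposition~\ref{existencefoliationAdS}), and only then proves uniqueness (Proposition~\ref{prop:uniqueCMC}) by taking an arbitrary $H$-surface $S$, looking at the level-set function $F$ of the foliation on $S$, and applying the maximum principle at the extrema of $F$---no flow of $S$ itself is needed. You should adopt this order.
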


The starting point of the proof is the following recent result proved by the first author in a joint work with Jean-Marc Schlenker:
\begin{teo}[\cite{QiyuAdS}]\label{KfoliationAdS} Let $M$ be a CGHM AdS manifold with particles. For every $K\in (-\infty, -1)$ there exists a unique past-convex space-like surface $\Sigma_{K}$ orthogonal to the singular lines and with constant Gauss curvature $K$. Moreover, the surfaces $\{ \Sigma_{K}\}_{K \in (-\infty, -1)}$ provide a $C^2$-foliation of the future of the convex core of $M$ outside the singular locus.
\end{teo}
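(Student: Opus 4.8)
\emph{Strategy.} The plan is to prove the three assertions — existence, uniqueness and the foliation property — separately, fixing $K\in(-\infty,-1)$ and writing $\kappa=-1-K>0$. By the Gauss equation $\det(B)=\Sec(X)-K_{I}=-1-K_{I}$ for surfaces in the AdS model, a \emph{past-convex} surface orthogonal to the singular lines with $K_{I}\equiv K$ is exactly a space-like surface whose shape operator $B$ is positive definite, Codazzi and satisfies $\det(B)=\kappa$. The starting point is the AdS analogue of Mess' parametrisation (see \cite{Schlenker-Krasnov}): a CGHM AdS manifold with particles is determined, up to isotopy, by an ordered pair $(\mu_{l},\mu_{r})\in\Tp\times\Tp$ of \emph{left} and \emph{right} hyperbolic metrics with cone singularities of angles $\bm{\theta}$. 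The key observation is that, for any space-like surface with embedding data $(I,B)$ solving the AdS Gauss--Codazzi equations, the two bilinear forms
\[
	\mu_{l}=I((E+J_{I}B)\cdot,(E+J_{I}B)\cdot),\qquad \mu_{r}=I((E-J_{I}B)\cdot,(E-J_{I}B)\cdot),
\]
with $J_{I}$ the complex structure of $I$, have constant curvature $-1$ (a consequence of the Gauss--Codazzi equations) and, because the surface is orthogonal to the singular lines — so that $B$ is umbilical at each puncture — carry cone singularities of the same angles $\bm{\theta}$; moreover they recover the left/right metrics of $M$ and therefore do not depend on the chosen surface.

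\emph{Existence and uniqueness for fixed $K$.} I would reconstruct $\Sigma_{K}$ from the minimal Lagrangian map associated with $M$. Since minimal Lagrangian maps between hyperbolic cone surfaces of angles less than $\pi$ exist and are unique — this rests on the existence and uniqueness of harmonic maps between cone surfaces \cite{GR}, and is precisely the content of Definition \ref{def: minmal Lagrangian} and Lemma \ref{lm:minimal Lagrangian} — there is a unique bundle morphism $b_{0}$, self-adjoint and positive for $\mu_{l}$ with $\det(b_{0})=1$, $d^{\nabla}b_{0}=0$ and $\mu_{l}(b_{0}\cdot,b_{0}\cdot)=m^{*}\mu_{r}$, realising the minimal Lagrangian map $m:(\Sigma_{\mathpzc{p}},\mu_{l})\to(\Sigma_{\mathpzc{p}},\mu_{r})$. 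Writing $\kappa=\tan^{2}\phi$ with $\phi\in(0,\pi/2)$, so that $\phi\mapsto K=-1/\cos^{2}\phi$ parametrises $(-\infty,-1)$, I would reconstruct the embedding data $(I,B)$ of $\Sigma_{K}$ from $(\mu_{l},\mu_{r},m)$ by the landslide/rotation construction, the parameter $\phi$ entering as the rotation angle; concretely one solves algebraically for $(I,B)$ from the two equations $\mu_{l/r}=I((E\pm J_{I}B)\cdot,(E\pm J_{I}B)\cdot)$ under the constraint $\det(B)=\kappa$, and checks that the resulting $B$ is positive definite, Codazzi, and umbilical at the punctures with eigenvalues tending to a common value there (the last clause of Lemma \ref{lm:minimal Lagrangian}). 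This produces a past-convex $\Sigma_{K}$, and the construction is invertible, giving uniqueness up to the ambient holonomy. To upgrade this to uniqueness \emph{inside} the fixed manifold $M$ I would argue as in Proposition \ref{prop:injectivity} and Lemma \ref{lm:mutualposition}: two such surfaces are compared by pushing one along the future-directed normal flow until first tangency, where the maximum principle (\cite{BBZAdSCMC} at regular points, and \cite[Lemma 3.12]{QiyuAdS} at the singular points), together with the monotonicity of the principal curvatures along the flow, forces $K$ to satisfy $K\le K$ with equality, hence the surfaces coincide.

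\emph{The foliation.} Once each $\Sigma_{K}$ is produced and shown to be unique, ordering and disjointness follow from the AdS analogue of Lemma \ref{lm:mutualposition}: if $K<K'$ then $\sup K_{\Sigma_{K}}<\inf K_{\Sigma_{K'}}$, so $\Sigma_{K}$ lies in the past of $\Sigma_{K'}$ and the two are disjoint. It remains to show that $\bigcup_{K<-1}\Sigma_{K}$ exhausts the future of the convex core. Here I would follow the proof of Theorem \ref{thm:KfoliationMink}: continuity of $K\mapsto\Sigma_{K}$, smooth away from the singular lines, follows from the limit-of-convex-surfaces machinery of \cite{barbotzeghib} combined with the convergence of the associated minimal Lagrangian maps, so the union is an open set $\Omega$ foliated by the $\Sigma_{K}$; and the two ends are controlled by the Gauss--Bonnet formula for cone surfaces (\cite[Proposition 1]{Tro}), which gives
\[
	\mathrm{Area}(\Sigma_{K})=\frac{2\pi}{K}\,\chi(\Sigma,\bm{\theta}).
\]
As $K\to-\infty$ the area tends to $0$, forcing the surfaces to escape every compact subset towards the future, while as $K\to-1^{+}$ the principal curvatures tend to $0$ and $\Sigma_{K}$ converges to the upper boundary of the convex core; hence $\Omega$ is exactly the future of the convex core. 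The $C^{2}$ regularity of the foliation follows from the smooth dependence of the embedding data on $K$ away from the singular lines (Schauder estimates) together with the orthogonality and umbilicity at the punctures.

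\emph{Main obstacle.} The hard part is the analytic core of the existence statement at the cone singularities: proving that the minimal Lagrangian reconstruction yields a metric $I$ and an operator $B$ with the correct asymptotics at the punctures — that $\Sigma_{K}$ stays orthogonal to the singular lines, carries cone angles exactly $\bm{\theta}$, and has both principal curvatures converging to a common value there. This requires the weighted H\"{o}lder regularity of harmonic maps between cone surfaces \cite{GR} and careful bookkeeping of the defect exponents $\beta_{i}\in(-1,-1/2)$. A secondary difficulty is the precise control of the two limits $K\to-1^{+}$ and $K\to-\infty$, where one must rule out pleating along geodesic segments joining singular points, exactly as in the tangency-to-a-light-like-plane argument used in the proof of Theorem \ref{thm:KfoliationMink}.
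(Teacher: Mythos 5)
First, a point of order: the present paper does not prove this statement at all --- Theorem \ref{KfoliationAdS} is imported verbatim from \cite{QiyuAdS} and used as the starting point of Section \ref{sec:AdS}, so your proposal has to be measured against the argument of that reference. Your broad strategy (the $(\mu_{l},\mu_{r})$ parametrisation, landslides, maximum principle at regular and singular tangency points, limit analysis at the two ends) is indeed the right family of ideas. But there is a genuine gap at the central existence step. You propose to take \emph{the} minimal Lagrangian map $m:(\Sigma_{\mathpzc{p}},\mu_{l})\to(\Sigma_{\mathpzc{p}},\mu_{r})$ with its bundle morphism $b_{0}$ and then ``solve algebraically'' for $(I,B)$ with $\det(B)=\kappa$ from the two equations $\mu_{l/r}=I((E\pm J_{I}B)\cdot,(E\pm J_{I}B)\cdot)$. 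This cannot work. The minimal Lagrangian map is the landslide induced by the \emph{maximal} surface (opposite Hopf differentials, $\alpha=\pi/2$ in Definition \ref{def:landslides}), whereas the $K$-surface with $K=-1/\cos^{2}(\alpha/2)$ induces an $\alpha$-landslide whose center $c_{\alpha}$ (the conformal class of the induced metric on $\Sigma_{K}$) and whose Hopf differential $\Phi_{\alpha}$ both vary with $\alpha$: one must find, for each $K$, a \emph{new} conformal class and a \emph{new} quadratic differential such that the two harmonic maps with Hopf differentials $e^{\pm i\alpha}\Phi_{\alpha}$ hit the prescribed $\mu_{l}$ and $\mu_{r}$. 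This is a genuinely nonlinear problem for every value of $\alpha$ --- equivalently, with conformal data fixed, a fully non-linear Monge--Amp\`ere-type equation analogous to (but harder than) Equation (\ref{eq:PDE}) --- and its solvability, together with uniqueness and the weighted-H\"older control at the cone points via \cite{GR}, is precisely the analytic core of \cite{QiyuAdS}. No algebraic manipulation of the single map $m$ produces the family $(I,B)$ indexed by $\kappa$; your sketch silently assumes the hardest part of the theorem.

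Two secondary points. Your claim that ``as $K\to-1^{+}$ the principal curvatures tend to $0$'' is false in general: the Gauss equation only gives $\det(B)=-1-K\to 0$, and since the limit $\partial^{+}C(M)$ is a pleated surface (except in the Fuchsian case), the larger principal curvature concentrates along the bending locus rather than tending to $0$; the correct statement is uniform convergence of $\Sigma_{K}$ to $\partial^{+}C(M)$, which requires a convexity/compactness argument, not curvature smallness. By contrast, your uniqueness-within-$M$ argument (normal flow to first tangency, maximum principle from \cite{BBZAdSCMC} at regular points and \cite[Lemma 3.12]{QiyuAdS} at singular points, using umbilicity there), the disjointness via the AdS analogue of Lemma \ref{lm:mutualposition}, and the Gauss--Bonnet area estimate $\mathrm{Area}(\Sigma_{K})=2\pi\chi(\Sigma,\bm{\theta})/K$ controlling the end $K\to-\infty$ are all sound and consistent with how the present paper handles the Minkowski analogue in Theorem \ref{thm:KfoliationMink}; with the existence step supplied from \cite{QiyuAdS}, that part of your outline would go through.
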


The main idea of the proof of Theorem \ref{mainteoAdS} is based on the observation that there exist two functions $d:(-\infty,-1) \rightarrow \R^{+}$ and $f:(-\infty, -1) \rightarrow \R$ such that, given a $K$-surface $\Sigma_{K}$, a surface parallel to $\Sigma_{K}$, in the past of $\Sigma_{K}$ at time-like distance $d(K)$ has constant mean curvature $f(K)$. This will imply that we can find a surface $S_{H}$ of constant mean curvature $H$ for every $H \in \R$ and then we deduce that the family $\{S_{H}\}_{H\in \R}$ must foliate the whole manifold $M$ by Theorem \ref{KfoliationAdS}.\\
\\
\indent We first recall the following well-known result about the behaviour of the principal curvatures along the normal flow:
\begin{lemma}[Lemma 3.11 \cite{QiyuAdS}]\label{principalcurvaturesAdS} Let $S\subset M$ be a past-convex space-like surface embedded into a CGHM AdS manifold with particles. Let $\psi^{t}:S \rightarrow M$ be the flow along the future-directed unit normal vector field $\eta$ on $S$. For every regular point $x \in S$, we have
\begin{enumerate}
	\item $\psi^{t}$ is an embedding in a neighbourhood of $x$ if $t$ satisfies $\lambda(x)\tan(t)\neq 1$ and $\mu(x)\tan(t)\neq 1$, where $\lambda(x)$ and $\mu(x)$ are the principal curvatures of $S$ at $x$;
	\item the principal curvatures of $\psi^{t}(S)$ at the point $\psi^{t}(x)$ are given by
\[
	\lambda_{t}(\psi^{t}(x))=\frac{\lambda(x)+\tan(t)}{1-\lambda(x)\tan(t)} \ \ \ \ \ \ \ \ \
	\mu_{t}(\psi^{t}(x))=\frac{\mu(x)+\tan(t)}{1-\mu(x)\tan(t)} \ .
\]
\end{enumerate}
\end{lemma}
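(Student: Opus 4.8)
The plan is to reduce the entire computation to the local AdS model $AdS_\theta$, exactly as in the Minkowski analogue (Lemma \ref{lm:principalcurvaturesMink}), since all quantities involved — the induced metric, the shape operator, and the principal curvatures — are determined by the local isometry type of a neighbourhood of the point. Thus it suffices to carry out the calculation in $\widetilde{AdS}_3$ (or, near a singular line, in $AdS_\theta$), where the geometry is explicit. First I would set up the orthonormal parametrisation of the geodesic flow: the geodesic leaving a point $x=\sigma(y)$ orthogonally to $S$ along the future-directed unit normal $\eta$ is, in the AdS setting, a \emph{time-like} geodesic, and unlike the flat case it is not affine but trigonometric. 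Concretely, writing $\gamma_x(t)=\cos(t)\,\sigma(y)+\sin(t)\,\eta(\sigma(y))$ in the ambient quadric realisation $\widehat{AdS}_3\subset\R^{2,2}$ (so that $\langle\gamma_x,\gamma_x\rangle=-1$ for all $t$, using $\langle\sigma,\sigma\rangle=-1$, $\langle\eta,\eta\rangle=1$, $\langle\sigma,\eta\rangle=0$), one parametrises the normal flow $\psi^t$ by arc length $t$. The appearance of $\cos(t),\sin(t)$ in place of the affine $1,\,t$ of Minkowski is precisely what produces the $\tan(t)$ in the final formulas.

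The key steps, in order, are as follows. I would first differentiate $\gamma_x$ in the tangential directions to obtain $d\psi^t = \cos(t)\,d\sigma + \sin(t)\,d(\eta\circ\sigma) = \cos(t)\,d\sigma - \sin(t)\,d\sigma\circ B$, using the Weingarten relation $d(\eta\circ\sigma)=-d\sigma\circ B$ that defines the shape operator $B$ of $S$. This yields the pulled-back first fundamental form along the flow,
\[
	I_t = I\big((\cos(t)E-\sin(t)B)\cdot,\,(\cos(t)E-\sin(t)B)\cdot\big),
\]
the exact trigonometric analogue of the $I((E-tB)\cdot,(E-tB)\cdot)$ of the flat case. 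Next I would compute the second fundamental form via $\II_t=-\tfrac12\frac{d}{dt}I_t$ (the same variational identity used in Lemma \ref{lm:principalcurvaturesMink}), and extract the shape operator $B_t=I_t^{-1}\II_t=(\cos(t)E-\sin(t)B)^{-1}(\sin(t)E+\cos(t)B)$. Since $B$ and $E$ commute and $S$ is diagonalisable, I would then diagonalise simultaneously: on the eigenline where $B$ has eigenvalue $\lambda(x)$, the operator $B_t$ acts by the scalar $\frac{\sin(t)+\cos(t)\lambda(x)}{\cos(t)-\sin(t)\lambda(x)}=\frac{\lambda(x)+\tan(t)}{1-\lambda(x)\tan(t)}$ after dividing numerator and denominator by $\cos(t)$, which is the claimed formula for $\lambda_t$; the same with $\mu(x)$ gives $\mu_t$. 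Statement (1) follows immediately: $\psi^t$ is a local embedding precisely where $I_t$ is non-degenerate, i.e. where the eigenvalues $\cos(t)-\sin(t)\lambda(x)$ and $\cos(t)-\sin(t)\mu(x)$ of $\cos(t)E-\sin(t)B$ are non-zero, equivalently $\lambda(x)\tan(t)\neq1$ and $\mu(x)\tan(t)\neq1$.

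The only genuine subtlety — and the step I expect to require the most care rather than the most computation — is justifying that the ambient geodesic $\gamma_x(t)$ really is the orthogonal normal flow in AdS and that the arc-length parametrisation is the one producing $\cos(t),\sin(t)$ with the correct sign conventions for a \emph{past-convex} surface (so that principal curvatures, under our convention, have the right sign as $t$ increases toward the future). I would fix conventions once at the start by checking them against a single totally umbilic example (a level set $\{\rho=\mathrm{const}\}$ in the explicit metric $g_{AdS_\theta}=-\cosh^2(\rho)\,dt^2+\sinh^2(\rho)\,d\varphi^2+d\rho^2$), where the principal curvatures are computable by hand and the formula can be verified directly. Because this lemma is quoted from \cite{QiyuAdS}, I would ultimately cite that reference for the detailed verification; the proof here is essentially the observation that the computation is local and reduces to the model, together with the trigonometric analogue of the flat-case argument already given in Lemma \ref{lm:principalcurvaturesMink}.
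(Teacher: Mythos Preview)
Your approach is correct and is precisely the one the paper adopts for the analogous de Sitter lemma (Lemma \ref{principalcurvaturesdS}): work in the quadric model, parametrise the normal geodesic as $\gamma_x(t)=\cos(t)\sigma(y)+\sin(t)\eta(\sigma(y))$, compute $I_t$, obtain $\II_t=-\tfrac{1}{2}\frac{d}{dt}I_t$, and read off $B_t$ and its eigenvalues. The paper itself does not give a proof of this lemma but simply cites \cite{QiyuAdS}, so your write-up is in fact more detailed than what appears here; your plan to pin down sign conventions via an umbilic example is a sensible addition but not strictly necessary once the Weingarten relation and the future-directed normal are fixed.
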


\begin{remark}\label{rk: singular points}
The previous formula hold true also at the singular points, whenever the principal curvatures at those points are well-defined.
\end{remark}

We underline in particular that, since with our convention a past-convex space-like surface $S$ has always positive principal curvatures, the equidistant surfaces $\psi^{t}(S)$ obtained by moving along the normal flow on the past of $S$ are well-defined for every $t<0$. Moreover, the principal curvatures of the equidistant surface decrease when moving towards the past.

\begin{lemma}\label{keylemmaAdS}Let $\Sigma_{K}$ be a past-convex space-like surface with constant Gauss curvature $K$, orthogonal to the singular lines. Then the surface $\psi^{-d(K)}(\Sigma_{K})$ in the past of $\Sigma_{K}$ has constant mean curvature
\[
	f(K)=\frac{-2-K}{2\sqrt{-1-K}}
\]
if
\[
	d(K)=\arctan\left(\sqrt{\frac{1}{-1-K}}\right)
\]
\end{lemma}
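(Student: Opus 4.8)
The plan is to read the embedding data of the equidistant surfaces directly off the normal-flow formulas of Lemma~\ref{principalcurvaturesAdS}, and to exploit the fact that, although the $K$-surface $\Sigma_{K}$ need not have constant mean curvature, the \emph{product} of its principal curvatures is pinned down by the Gauss equation. Denote by $\lambda$ and $\mu$ the principal curvatures of $\Sigma_{K}$ at a regular point $x$. Since $\Sigma_{K}$ is embedded in an AdS manifold (so $\Sec(X)=-1$) and has constant Gauss curvature $K$, the Gauss equation gives
\[
	\lambda\mu=\det(B)=\Sec(X)-K=-1-K=:Q,
\]
which is a \emph{constant} $Q>0$ (recall $K<-1$), whereas the sum $P:=\lambda+\mu=2H_{\Sigma_{K}}$ is allowed to vary from point to point.

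Next I would set $t=-d(K)$ and abbreviate $s:=\tan(t)=-\tan(d(K))=-\sqrt{\tfrac{1}{-1-K}}=-\tfrac{1}{\sqrt{Q}}$, so that $s^{2}=\tfrac{1}{Q}$. By the formulas of Lemma~\ref{principalcurvaturesAdS}, the mean curvature of $\psi^{-d(K)}(\Sigma_{K})$ at the corresponding point equals
\[
	H_{-d(K)}=\frac{1}{2}\left(\frac{\lambda+s}{1-\lambda s}+\frac{\mu+s}{1-\mu s}\right)
	=\frac{1}{2}\cdot\frac{P(1-s^{2})+2s(1-Q)}{1-Ps+Qs^{2}}.
\]
The heart of the argument is then a one-line simplification: substituting $s^{2}=1/Q$ and $s=-1/\sqrt{Q}$ makes both numerator and denominator share the factor $P+2\sqrt{Q}$, which cancels, leaving
\[
	H_{-d(K)}=\frac{Q-1}{2\sqrt{Q}}=\frac{-2-K}{2\sqrt{-1-K}}=f(K).
\]
The decisive feature is that the dependence on $P$ disappears, so the equidistant surface has \emph{constant} mean curvature even though $\Sigma_{K}$ was only assumed to have constant Gauss curvature; this is the exact AdS analogue of the cancellation already encountered in the Minkowski computation of Lemma~\ref{lm:existenceKsurface}.

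Finally I would check the two ancillary points. First, the surface $\psi^{-d(K)}(\Sigma_{K})$ is genuinely a well-defined embedded surface: since we flow into the past ($t<0$, hence $s<0$) and a past-convex surface has positive principal curvatures, the non-degeneracy conditions $\lambda\tan(t)\neq 1$ and $\mu\tan(t)\neq 1$ of Lemma~\ref{principalcurvaturesAdS} hold automatically, in accordance with the observation that equidistant surfaces in the past are defined for all $t<0$. Second, because $\Sigma_{K}$ is orthogonal to the singular lines and hence umbilical there, the normal-flow formulas persist at the singular points by Remark~\ref{rk: singular points}, and moving along the normal flow preserves both orthogonality to the singular locus and the cone angles of the induced metric; thus the computation above holds verbatim at every point and $\psi^{-d(K)}(\Sigma_{K})$ has globally constant mean curvature $f(K)$. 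I do not expect any real obstacle here: the only delicate step is the algebraic cancellation, and once one records that it is $\lambda\mu$ (and not $\lambda+\mu$) that is constrained, the result follows by direct substitution.
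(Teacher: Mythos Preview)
Your proof is correct and follows essentially the same route as the paper: both arguments use the principal-curvature formulas of Lemma~\ref{principalcurvaturesAdS} together with the Gauss equation $\lambda\mu=-1-K$ to compute $H_{-d(K)}$ directly and observe that the dependence on $\lambda+\mu$ cancels. The only organisational difference is that the paper first \emph{derives} the value of $d(K)$ by computing $dH_{t}$ and solving $1-\kappa\tan^{2}(t)=0$, whereas you simply plug in the stated value of $d(K)$ and verify; since $d(K)$ is given in the statement, your direct verification is entirely sufficient, and your $P,Q$ bookkeeping makes the cancellation of $P+2\sqrt{Q}$ (positive because $\Sigma_K$ is past-convex) more transparent than the paper's line-by-line manipulation.
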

\begin{proof}Let us denote by $\lambda$ and $\mu$ the principal curvatures of the surface $\Sigma_{K}$. By assumption (and by the Gauss equation) the product $\kappa=\lambda\mu$ is constant. Let us denote by $\lambda_{t}$ and $\mu_{t}$ the principal curvatures of the surface $\psi^{t}(\Sigma_{K})$. We need to show that there exists a time $t<0$, such that the mean curvature $H_{t}=\frac{\mu_{t}+\lambda_{t}}{2}$ is constant on $\psi^t(\Sigma_K)$.
To prove this, it suffices to find $t<0$ such that $dH_{t}=0$. By the previous lemma, we have
\begin{align*}
	&2dH_{t}=(d\lambda_{t}+d\mu_{t})\\
	      &=\left( \frac{d\lambda}{1-\lambda\tan(t)}+\frac{(\lambda+\tan(t))\tan(t)d\lambda}{(1-\lambda\tan(t))^{2}}+ \frac{d\mu}{1-\mu\tan(t)}+\frac{(\mu+\tan(t))\tan(t)d\mu}{(1-\mu\tan(t))^{2}}\right)
\end{align*}
This vanishes identically if and only if
\begin{equation}\label{eq:differential of H_t}
\begin{split}
0&=(d\mu+\tan(t)^{2}d\mu)(1-\lambda\tan(t))^{2}+(d\lambda+\tan^{2}(t)d\lambda)(1-\mu\tan(t))^{2}\\ &=(1+\tan^{2}(t))[d\mu+d\mu\lambda^{2}\tan^{2}(t)+d\lambda+\mu^{2}d\lambda\tan^{2}(t)]\\
&-2(1+\tan^{2}(t))\tan(t)(\lambda d\mu+\mu d\lambda) \ .
\end{split}
\end{equation}

By substituting $\mu=\frac{\kappa}{\lambda}$ in \eqref{eq:differential of H_t} and using the fact that $0=d(\kappa)=d(\lambda\mu)=\mu d\lambda+\lambda d\mu$, we get
\begin{align*}
	0&=\frac{1}{\lambda^{2}}(1+\tan^{2}(t))(-\kappa d\lambda+\lambda^{2}d\lambda-\kappa\tan^{2}(t)\lambda^{2}d\lambda+\kappa^{2}\tan^{2}(t)d\lambda)\\
	&=\frac{d\lambda}{\lambda^{2}}(1+\tan^{2}(t))(\lambda^{2}-\kappa)(1-\kappa\tan^{2}(t))
\end{align*}
and the claim follows from the Gauss equation $\kappa=-1-K$.
The formula for the mean curvature follows then by a direct computation using the previous lemma:
\begin{equation*}
\begin{split}
	2H_{-d(K)}&=\lambda_{-d(K)}+\mu_{-d(K)}=\frac{\lambda-\sqrt{\frac{1}{\lambda\mu}}}{1+\sqrt{\frac{\lambda}{\mu}}}+\frac{\mu-\sqrt{\frac{1}{\lambda\mu}}}{1+\sqrt{\frac{\mu}{\lambda}}}\\
	&=\frac{\lambda+2\sqrt{\lambda\mu}-2\sqrt{\frac{1}{\lambda\mu}}-\frac{1}{\lambda}+\mu-\frac{1}{\mu}}{2+\sqrt{\frac{\lambda}{\mu}}+\sqrt{\frac{\mu}{\lambda}}}=\frac{\lambda+\mu-\frac{1}{\mu}-\frac{1}{\lambda}+2\sqrt{\kappa}-\frac{2}{\sqrt{\kappa}}}{2+\frac{\sqrt{\kappa}}{\mu}+\frac{\sqrt{\kappa}}{\lambda}}\\
	&=\frac{\kappa^{2}+\mu^{2}\kappa-\mu^{2}-\kappa+2\kappa\sqrt{\kappa}\mu-2\sqrt{\kappa}\mu}{2\kappa\mu+\kappa\sqrt{\kappa}+\mu^{2}\sqrt{\kappa}}\\
	&=\frac{(\kappa-1)(\mu^{2}+\kappa+2\sqrt{\kappa}\mu)}{\sqrt{\kappa}(\mu^{2}+\kappa+2\sqrt{\kappa}\mu)} =\frac{\kappa-1}{\sqrt{\kappa}}=\frac{-2-K}{\sqrt{-1-K}}\ ,
\end{split}
\end{equation*}
where we used many times the fact that $\lambda\mu=\kappa$ and in the last equality we used the Gauss formula $\kappa=-1-K$.
\end{proof}

Therefore, we can define two continuous functions
\[
	d:(-\infty,-1)\rightarrow \R^{+} \ \ \ \ \ \ \text{and} \ \ \ \ \ \ f:(-\infty, -1) \rightarrow \R
\]
with the property that, given a past-convex space-like surface $\Sigma_{K}$ with constant Gauss curvature $K$ and orthogonal to the singular locus, the equidistant surface $S_{H}$ in the past of $\Sigma_{K}$ at  time-like distance $d(K)$ has constant mean curvature $H=f(K)$. By studying the image of the function $f$ we easily deduce (using Theorem \ref{KfoliationAdS}) the following:

\begin{cor}Given a CGHM AdS manifold with particles $M$, for every $H \in \R$ there exists a space-like surface $S_{H}$ orthogonal to the singular locus, with constant mean curvature $H$.
\end{cor}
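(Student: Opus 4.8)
The plan is to reduce the statement to an elementary surjectivity property of the function $f$ introduced just before the corollary, combining the existence of $K$-surfaces from Theorem \ref{KfoliationAdS} with the equidistant construction of Lemma \ref{keylemmaAdS}. First I would analyse the map $f:(-\infty,-1)\to \R$, $f(K)=\frac{-2-K}{2\sqrt{-1-K}}$, and show that it is a continuous increasing bijection onto all of $\R$. The cleanest route is the substitution $s=\sqrt{-1-K}$, which ranges over $(0,+\infty)$ as $K$ ranges over $(-\infty,-1)$; a one-line computation gives $f=\frac{1}{2}\left(s-\frac{1}{s}\right)$, whose derivative $\frac{1}{2}\left(1+\frac{1}{s^{2}}\right)$ is strictly positive, and whose limits are $-\infty$ as $s\to 0^{+}$ and $+\infty$ as $s\to +\infty$. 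Hence $f$ is a homeomorphism onto $\R$.

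Given $H\in\R$, I would then set $K=f^{-1}(H)\in(-\infty,-1)$ and invoke Theorem \ref{KfoliationAdS} to obtain the unique past-convex $K$-surface $\Sigma_{K}$ orthogonal to the singular lines. Applying Lemma \ref{keylemmaAdS}, the equidistant surface $S_{H}:=\psi^{-d(K)}(\Sigma_{K})$ in the past of $\Sigma_{K}$ has constant mean curvature $f(K)=H$, which is precisely the surface claimed.

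The only points requiring a brief verification are that $S_{H}$ remains space-like, embedded and orthogonal to the singular locus. Since $\Sigma_{K}$ is past-convex its principal curvatures are positive, and we flow into the past, i.e. at parameter $-d(K)$ with $d(K)=\arctan\!\left(\sqrt{\tfrac{1}{-1-K}}\right)\in(0,\tfrac{\pi}{2})$, so $\tan(-d(K))<0$; the embedding condition $\lambda\tan(t)\neq 1$, $\mu\tan(t)\neq 1$ of Lemma \ref{principalcurvaturesAdS} is then automatic, as no focal point is crossed. Orthogonality is inherited from $\Sigma_{K}$ because $S_{H}$ is obtained by following the normal flow, which moves a singular point along the singular geodesic and preserves umbilicity there (Remark \ref{rk: singular points}).

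I do not expect a genuine obstacle here, as the essential analytic and geometric work is already contained in Theorem \ref{KfoliationAdS} and Lemma \ref{keylemmaAdS}; the remaining content is the surjectivity of $f$ together with the bookkeeping of the preceding paragraph. The one place to stay attentive is confirming that the equidistant flow is well defined and orthogonal all the way to time $-d(K)$, but this follows from the positivity of the principal curvatures of the past-convex surface and the explicit range of $d(K)$.
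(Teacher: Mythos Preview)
Your proposal is correct and follows exactly the route the paper intends: the paper's own argument is the one-line remark preceding the corollary, namely that the result follows ``by studying the image of the function $f$'' together with Theorem \ref{KfoliationAdS}, and you have simply made that surjectivity explicit via the substitution $s=\sqrt{-1-K}$ and added the routine checks on embeddedness and orthogonality under the normal flow.
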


Notice that the surface $S_{H}$ obtained in this way is umbilical, i.e. the principal curvatures extend continuously and coincide, at the singular points. In fact, the $K$-surface $\Sigma_{K}$, provided by Theorem \ref{KfoliationAdS}, is umbilical at the singular points (Proposition 3.6 \cite{QiyuAdS}) and $S_{H}$ is obtained by pushing $\Sigma_{K}$ along the normal flow, hence the claim follows from Lemma \ref{principalcurvaturesAdS} and Remark \ref{rk: singular points}. This property is actually true for every space-like surface $S$, orthogonal to the singular lines and with constant mean curvature, as the following lemma shows. This will be crucial in order to apply the Maximum Principle in this singular context.

\begin{lemma}\label{lm:AdSmeancurvature} Let $S$ be a space-like surface, orthogonal to the singular locus, with constant mean curvature $H$, embedded in a CGHM AdS manifold with particles. Then the principal curvatures of $S$ at the singular points coincide and are equal to $H$.
\end{lemma}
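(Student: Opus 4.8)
The plan is to reduce the statement to the two general facts established in Sections \ref{background} and \ref{models}, exactly as was done in the flat case inside the proof of Lemma \ref{lm:est_princ_curv}. Write the shape operator as $B=B_0+HE$, where $B_0$ is its trace-less part. Since $H$ is constant, the term $HE$ is automatically Codazzi (the Levi-Civita connection is torsion-free), so the Codazzi equation $d^{\nabla^{I}}B=0$ forces $B_0$ to be Codazzi as well; by construction $B_0$ is also trace-less and self-adjoint for the induced metric $I$. By Proposition \ref{integrability}, whose proof uses only the Gauss--Bonnet formula, the Gauss equation, and the constancy of $H$ and $\Sec(X)$ (and therefore applies verbatim in the anti-de Sitter setting, where $\Sec(X)=-1$), the operator $B_0$ is integrable, i.e. $\int_S \trace(B_0^2)\,dA_I<+\infty$.

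Next I would invoke the correspondence between trace-less integrable Codazzi operators and meromorphic quadratic differentials. Although Proposition \ref{prop:estimatesB} and its converse are stated for a hyperbolic metric, their proofs are entirely local and depend only on the conformal form of the metric near a cone point, namely $e^{2u_i}|z|^{2\beta_i}|dz|^2$ with defect $\beta_i=\frac{\theta_i}{2\pi}-1$. Because $S$ is orthogonal to the singular locus, its first fundamental form $I$ carries cone singularities of the same angles $\theta_i$, hence has exactly this local form with the same $\beta_i\in(-1,-1/2)$. Thus the correspondence applies with $I$ in place of the hyperbolic metric: there is a meromorphic quadratic differential $q$, with respect to the conformal class of $I$ and with at most simple poles at the marked points, such that $B_0=I^{-1}\Re(q)$.

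Finally, the last assertion of Proposition \ref{prop:estimatesB} shows that $B_0=b_q$ extends continuously to $0$ at each singular point: writing $q=f(z)\,dz^2$ with $f$ having at most a simple pole, one obtains $\|B_0\|^2\leq C|z|^{-4\beta_i-2}$, and the exponent $-4\beta_i-2$ is strictly positive precisely because $\theta_i<\pi$ (equivalently $\beta_i<-1/2$). Consequently $B=HE$ at the singular points, so the two principal curvatures of $S$ coincide there and both equal $H$, as claimed. I do not expect a genuine obstacle: the only point requiring care is the transfer of Proposition \ref{prop:estimatesB} from the hyperbolic metric to the (non-hyperbolic) induced metric $I$, and this is immediate once one observes that the estimates depend solely on the cone angle through $\beta_i$ and not on the curvature of the ambient metric.
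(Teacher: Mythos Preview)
Your proposal is correct and follows essentially the same approach as the paper: decompose $B=B_0+HE$, observe that $B_0$ is traceless and Codazzi, invoke Proposition~\ref{integrability} for integrability, and then Proposition~\ref{prop:estimatesB} to conclude that $B_0$ extends continuously to $0$ at the punctures. The paper's proof is a two-line citation of these two propositions, whereas you spell out the one genuine subtlety---that Proposition~\ref{prop:estimatesB} is stated for a hyperbolic background but its local estimates depend only on the defect $\beta_i$---which the paper leaves implicit.
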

\begin{proof}If $B$ is the shape operator of $S$ and we decompose $B=B_{0}+HE$, where $B_{0}$ is traceless, and Codazzi for $I$ (since $H$ is constant, and $B$ is Codazzi for $I$). By Proposition \ref{prop:estimatesB} and Proposition \ref{integrability}, the eigenvalues of $B_{0}$ tend to $0$ at the singular points and the claim follows.
\end{proof}

Let $S_H$ denote the surface in a CGHM AdS manifold with particles $M$ obtained from the past-convex space-like surface $\Sigma_K$ with constant curvature $K<-1$ with $f(K)=H$ as constructed in Lemma \ref{keylemmaAdS}. We have the following proposition.

\begin{prop}\label{existencefoliationAdS}The surfaces $\{S_{H}\}_{H \in \R}$ foliate the whole manifold $M$.
\end{prop}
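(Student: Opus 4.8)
The plan is to realise $\{S_H\}_{H\in\R}$ as a continuous family of pairwise disjoint Cauchy surfaces sweeping out all of $M$. I would first record the monotonicity of the functions from Lemma~\ref{keylemmaAdS}. Setting $s=\sqrt{-1-K}$, one has $f(K)=\tfrac12(s-s^{-1})$, a strictly decreasing homeomorphism of $(-\infty,-1)$ onto $\R$; moreover $d(K)=\arctan(s^{-1})\to 0$ as $K\to-\infty$ and $d(K)\to\pi/2$ as $K\to-1^{+}$. Hence $H=f(K)\to+\infty$ as $K\to-\infty$ (with $d\to0$) and $H\to-\infty$ as $K\to-1^{+}$ (with $d\to\pi/2$), and for every $H\in\R$ the surface $S_H=\psi^{-d(K)}(\Sigma_K)$, $K=f^{-1}(H)$, is well defined.

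Next I would prove that $H\mapsto S_H$ is continuous. By Theorem~\ref{KfoliationAdS} the $K$-surfaces form a $C^{2}$-foliation of the future of the convex core, so $\Sigma_{K_n}\to\Sigma_K$ whenever $K_n\to K$; since $K=f^{-1}(H)$ and the flow time $d(K)$ depend continuously on $H$ and the normal flow $\psi^{t}$ is continuous — including at the singular locus, to which all the surfaces are orthogonal — the embeddings of $S_{H_n}$ converge to that of $S_H$ whenever $H_n\to H$, uniformly and smoothly away from the particles, exactly as in the proof of Theorem~\ref{thm:KfoliationMink}.

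Then I would show the leaves are pairwise disjoint. The essential input is Lemma~\ref{lm:AdSmeancurvature}: each $S_H$ is umbilic at the punctures, so the maximum principle for constant mean curvature surfaces applies both at regular points (\cite{BBZAdSCMC}) and at the singular points (\cite[Proposition~3.6]{QiyuAdS}). Arguing as in Proposition~\ref{prop:injectivity}, if two leaves $S_H$ and $S_{H'}$ with $H\neq H'$ met, then — since they are graphs over a common Cauchy surface — their difference would attain an interior extremum at a point of tangency, and the maximum principle would force an equality between the two (distinct) mean curvatures, a contradiction. Thus the $S_H$ are disjoint; together with the continuity of $H\mapsto S_H$, this makes the parametrising map $\Sigma_{\mathpzc{p}}\times\R\to M$ a continuous injection, so by invariance of domain $\Omega=\bigcup_{H\in\R}S_H$ is open and is foliated by the $S_H$.

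Finally I would prove $\Omega=M$ by showing $\Omega$ is also closed, i.e. that the leaves escape to the two ends of $M$ as $H\to\pm\infty$. As $H\to+\infty$ we have $K\to-\infty$ and $d(K)\to0$, so $S_H$ stays close to $\Sigma_K$ and accumulates on the future conformal boundary, since the $\Sigma_K$ already exhaust the future of the convex core. As $H\to-\infty$ we have $K\to-1^{+}$ and $d(K)\to\pi/2$; the principal curvatures of $S_H$ tend to $-\infty$, so $S_H$ becomes increasingly future-convex and accumulates on the past conformal boundary. Consequently any sequence $p_n\in S_{H_n}$ converging to an interior point $p\in M$ forces $\{H_n\}$ to remain bounded, whence $H_n\to H$ along a subsequence and $p\in S_H\subset\Omega$ by continuity. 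Thus $\Omega$ is open, closed and nonempty in the connected manifold $M$, so $\Omega=M$ and $\{S_H\}_{H\in\R}$ is the desired foliation. I expect this last exhaustion step to be the main obstacle: making rigorous that the leaves reach the past end as $d(K)\to\pi/2$, and controlling the convergence uniformly near the singular lines, are the delicate points.
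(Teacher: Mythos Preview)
Your outline is largely parallel to the paper's proof: the continuity step and the exhaustion toward the future end (via $K\to-\infty$, $d(K)\to0$, and the fact that the $\Sigma_K$ already exhaust the future of the convex core) are essentially what the authors do. Your disjointness argument is a genuine variant: instead of deducing it from the disjointness of the $\Sigma_K$ foliation and the construction $S_H=\psi^{-d(K)}(\Sigma_K)$, you run the maximum-principle comparison directly on the $S_H$'s, which is really the argument of Proposition~\ref{prop:uniqueCMC} transplanted here. That is fine, but be careful with the phrasing ``extremum of the graph difference forces equality of the mean curvatures'': the clean version pushes one surface along the normal flow until it becomes tangent to the other from the future, then combines the strict monotonicity of the mean curvature along the flow with the tangency inequality, exactly as in Proposition~\ref{prop:injectivity}.

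The real gap is the one you flag yourself: the past exhaustion. The assertion ``the principal curvatures of $S_H$ tend to $-\infty$, so $S_H$ accumulates on the past conformal boundary'' is not an argument (and, incidentally, only one of the two principal curvatures need blow up; there is also no conformal boundary in the AdS setting). The paper's device is to measure everything from the upper boundary $\partial^{+}C(M)$ of the convex core. As $K\to-1^{+}$ two things happen simultaneously: by Theorem~\ref{KfoliationAdS} the leaf $\Sigma_K$ converges to $\partial^{+}C(M)$, and $d(K)\to\pi/2$. Hence for any $\delta>0$ one can produce a leaf $S_{f(-1-\epsilon)}$ lying at time-like distance at least $\pi/2-2\delta$ from $\partial^{+}C(M)$. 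On the other hand, if $\Omega$ failed to cover the past of the maximal surface, then (since $\Omega$ is an open product region) $\Omega\cap I^{-}(\partial^{+}C(M))$ would be contained in the slab of points at distance at most $\pi/2-3\delta$ from $\partial^{+}C(M)$ for some fixed $\delta>0$; this uses the standard fact that in a CGHM AdS manifold the past of $\partial^{+}C(M)$ has time-like depth exactly $\pi/2$. These two statements are incompatible. The analogous argument, with $\partial^{-}C(M)$ and $d(K)\to0$, handles the future end. So the missing idea is not analytic control of the curvatures of $S_H$, but the explicit distance bookkeeping relative to the convex core.
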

\begin{proof}
Let $\Omega$ be the union of the constant mean curvature surfaces $S_{H}$ over all $H\in\R$. An argument similar to that of Proposition \ref{prop:HfoliationMink} shows that if $H_{n}$ is a sequence converging to $H \in (-\infty, +\infty)$, then the surface $S_{H_{n}}$ converges to $S_{H}$ uniformly everywhere and smoothly outside the singular locus.\\
\indent We now prove that the region $\Omega$ must coincide with the whole manifold $M$. Suppose that the family $\{S_{H}\}_{H \in \R}$ does not foliate completely the past of the maximal surface of $M$ (the maximal surface exists uniquely, see \cite[Theorem 1.4]{Jeremy}). Let us denote by $C(M)$ the convex core of $M$. By assumption and the above result, there exists $\delta>0$ such that $\Omega\cap I^{-}(\partial^{+}C(M))$ is contained in the domain
\[
	I^{-}_{\delta}(\partial^{+}C(M))=\{ p \in I^{-}(\partial^{+}C(M)) \ | \ d_{AdS}(p,\partial^{+}C(M))\leq \frac{\pi}{2}-3\delta\} \ ,
\]
where $d_{AdS}$ denotes the causal distance induced by the metric on $M$. A contradiction will follow if we prove that we can find $H<0$ such that the surface $S_{H}$ is at time-like distance bigger than $\frac{\pi}{2}-3\delta$ from the upper-boundary of the convex core. Since
\[
	\lim_{K\to -1}d(K)=\frac{\pi}{2}\ ,
\]
we can find a surface $\Sigma_{K_{\epsilon}}$ with constant Gauss curvature $-1-\epsilon$ such that
\[
	d(-1-\epsilon)\geq \frac{\pi}{2}-\delta\ ,
\]
and the surface $\Sigma_{K_{\epsilon}}$ lies in a $\delta$-neighbourhood of the upper boundary of the convex core. This last property is guaranteed by Theorem \ref{KfoliationAdS}: when the curvature tends to $-1$, the constant curvature surface converges uniformly to the boundary of the convex core of $M$. Now, by Lemma \ref{keylemmaAdS}, the equidistant surface $S_{\epsilon}$ at time-like distance $d(-1-\epsilon)$ from $\Sigma_{K_{\epsilon}}$ is orthogonal to the singular lines, has constant mean curvature $f(-1-\epsilon)$ and belongs to the family $\{S_{H}\}_{H \in \R}$. On the other hand, the time-like distance between $S_{\epsilon}$ and the upper-boundary of the convex core is bigger than $\pi/2-2\delta$ by construction. \\
\indent In a similar way we can exclude that the family $\{S_{H}\}_{H\in \R}$ does not foliate the entire future of the maximal surface of $M$. If this were the case, there would exist $\epsilon>0$ such that the upper boundary of $\Omega$ would be at time-like distance less than $\frac{\pi}{2}-3\epsilon$ from the lower-boundary of the convex core. Since
\[
	\lim_{K\to-\infty}d(K)=0\ ,
\]
then there exists $K_{0}<0$ such that for every $K<K_{0}$, the constant mean curvature surface 
$S_{f(K)}$ is at time-like distance less than $\epsilon$ from the correspondent $K$-surface $\Sigma_{K}$. By Theorem \ref{KfoliationAdS}, we can find a point $p\in M$ such that
\[
	d_{AdS}(p, \partial^{-}C(M))\geq \frac{\pi}{2}-\epsilon
\]
and such that $p \in \Sigma_{K}$ for some $K<K_{0}$. But now, the point $\psi^{-d(K)}(p) \in S_{f(K)}$ obtained from $p$ by following the normal flow is at time-like distance greater than $\frac{\pi}{2}-2\epsilon$ from the lower-boundary of the convex core. But this contradicts the fact that $\psi^{-d(K)}(p) \in \Omega$.

Note that $\Sigma_{K}$ is disjoint from any other $\Sigma_{K'}$ for all $K'\not=K\in(-\infty,-1)$ (see Theorem \ref{KfoliationAdS}). Combined with the construction of $S_H$, it follows that $S_H$ is disjoint from any other $S_{H'}$ for all $H\not=H'\in\R$.

Combining all the results above, the proposition follows.

\end{proof}

We then deduce that this foliation is unique by an application of the Maximum Principle.
\begin{prop}\label{prop:uniqueCMC} The foliation by constant mean curvature surfaces $\{S_{H}\}_{H\in \R}$ is unique.
\end{prop}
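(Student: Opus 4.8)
The plan is to reduce the uniqueness of the foliation to the uniqueness of the $H$-surface for each fixed value $H$, exactly as in the Minkowski case (compare Corollary \ref{cor:uniqueHfoliationMink}). Indeed, any foliation of $M$ by space-like constant mean curvature surfaces orthogonal to the singular lines has leaves which are themselves such surfaces; if we show that for every $H\in\R$ there is \emph{at most} one space-like surface orthogonal to the singular lines with constant mean curvature $H$, then each leaf must coincide with the corresponding member $S_{H}$ of the family constructed in Proposition \ref{existencefoliationAdS}. Since $\{S_{H}\}_{H\in\R}$ already foliates $M$ and distinct leaves are disjoint, this forces any such foliation to agree with $\{S_{H}\}_{H\in\R}$. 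Thus the whole statement follows from the claim that a space-like $H$-surface orthogonal to the singular lines is unique.

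To prove the claim I would argue by the geometric maximum principle, using the already constructed foliation as a comparison family. Let $S'$ be a space-like surface orthogonal to the singular lines with constant mean curvature $H_{0}$, and let $\tau\colon S'\to\R$ be the continuous function sending $x\in S'$ to the label of the unique leaf $S_{\tau(x)}$ through $x$ (well defined and continuous because $\{S_{H}\}$ foliates $M$ by Proposition \ref{existencefoliationAdS}). Since $S'$ is compact, $\tau$ attains a maximum at some $p^{+}$ and a minimum at some $p^{-}$; denote by $S_{H^{+}}$ and $S_{H^{-}}$ the corresponding future-most and past-most leaves met by $S'$. Because the mean curvature of the leaves increases towards the future (Lemma \ref{principalcurvaturesAdS} and the remark thereafter), we have $H^{-}\leq H^{+}$. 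At $p^{+}$ the surface $S'$ lies in the past of $S_{H^{+}}$ and is tangent to it, so the maximum principle (\cite{BBZAdSCMC}) gives $H^{+}\leq H_{0}$; at $p^{-}$ the surface $S'$ lies in the future of $S_{H^{-}}$ and is tangent to it, so the maximum principle gives $H_{0}\leq H^{-}$. Combining,
\[
	H^{+}\leq H_{0}\leq H^{-}\leq H^{+} ,
\]
whence $H^{-}=H^{+}=H_{0}$ and $\tau\equiv H_{0}$ on $S'$. Therefore $S'\subseteq S_{H_{0}}$, and since both are closed surfaces homeomorphic to $\Sigma$ we conclude $S'=S_{H_{0}}$.

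The main obstacle is the application of the maximum principle when the tangency point $p^{\pm}$ lies on a singular line, where the classical maximum principle is not available. This is precisely where the orthogonality assumption enters: by Lemma \ref{lm:AdSmeancurvature} every constant mean curvature surface orthogonal to the singular lines is umbilical at the singular points, with both principal curvatures equal to its mean curvature, and the same holds for the leaves $S_{H}$; hence at a singular tangency one invokes instead the maximum principle for umbilical surfaces (\cite[Lemma 3.12]{QiyuAdS} and \cite[Proposition 3.6]{QiyuAdS}), which yields the same two inequalities. A secondary point to check is that $\tau$ is genuinely a time function, i.e.\ that the leaf-label is monotone along future-directed causal curves, so that the future-most leaf indeed carries the largest value $H^{+}$; this monotonicity is a direct consequence of the disjointness of the leaves established in Proposition \ref{existencefoliationAdS} together with the fact that the mean curvature strictly increases under the future normal flow. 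With these two ingredients the sandwich argument above goes through verbatim at both regular and singular extrema.
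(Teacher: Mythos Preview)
Your proposal is correct and follows essentially the same approach as the paper: restrict the leaf-label function of the foliation $\{S_{H}\}_{H\in\R}$ to an arbitrary $H_{0}$-surface, locate the extremal leaves tangent to it, apply the maximum principle at the tangency points (treating regular and singular tangencies separately via Lemma~\ref{lm:AdSmeancurvature} and \cite[Lemma 3.12]{QiyuAdS}), and conclude from the resulting sandwich $H^{+}\leq H_{0}\leq H^{-}\leq H^{+}$ that the surface coincides with $S_{H_{0}}$. The paper's proof is organized identically, with the same three-case distinction for the location of the extremal points.
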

\begin{proof}It is sufficient to prove that, if $S$ is a space-like surface, orthogonal to the singular locus, with constant mean curvature $H$, then $S$ must coincide with the surface $S_{H}$ in the foliation.
We can define a continuous function $F:M \rightarrow \R$ such that for every $H \in \R$, the level set $F^{-1}(H)$ is the $H$-surface $S_{H}$ in the foliation provided by the previous proposition.
Since $S$ is compact, the function $F$ admits a minimum $H_{min}$ and a maximum $H_{max}$ on $S$. By construction the surface $S_{H_{min}}$ is tangent to $S$ at a point $p_{min}$, the surface $S_{H_{max}}$ is tangent to $S$ at a point $p_{max}$, $S_{H_{min}}$ is in the past of $S$ and $S_{H_{max}}$ is in the future of $S$. We have to distinguish three cases:
\begin{enumerate}
	\item $p_{min}$ and $p_{max}$ are both non-singular points. In this case, by the classical Maximum Principle (\cite{BBZAdSCMC}), the mean curvature of $S$  is not bigger than the mean curvature of $S_{H_{min}}$ at the point $p_{min}$ and the mean curvature of $S$ is not smaller than the mean curvature of $S_{H_{max}}$ at the point $p_{max}$. Since $S$ has constant mean curvature $H$ we deduce that
\[
	H_{max}\leq H \leq H_{min} \leq H_{max} \ .
\]
Hence, $F$ is constant on $S$ and $S$ coincides with the level set $F^{-1}(H)$, as claimed.
	\item $p_{min}$ and $p_{max}$ are both singular points. Since $S$, $S_{H_{min}}$ and $S_{H_{max}}$ are all umbilical at the singular points, we can apply again the Maximum Principle (Lemma 3.12 \cite{QiyuAdS}) and deduce that
\[
	H_{max}\leq H \leq H_{min} \leq H_{max} \ .
\]
The conclusion then follows as before.
	\item if only one between $p_{min}$ and $p_{max}$ is a singular point, we can conclude by combining the two previous arguments.
\end{enumerate}
\end{proof}

\subsection{Application to landslides} In \cite{landslide1} the authors introduced an $S^{1}$-action on the product of two copies of the Teichm\"uller space of a closed, oriented surface of genus $\tau\geq 2$. This construction was later extended by \cite{QiyuAdS} to the case of surfaces with cone singularities of angles less than $\pi$. In the original definition, this landslide flow had an interpretation in terms of constant Gauss curvature surfaces embedded in CGHM AdS manifolds (possibly with particles). Then, the second author proved in \cite{TambuCMC} that landslides can also be induced by constant mean curvature surfaces. The proof presented there can be easily adapted in this singular context.

\begin{defi}\label{def:landslides}
An orientation-preserving diffeomorphism $f:(\Sigma_{\mathpzc{p}}, h)\rightarrow (\Sigma_{\mathpzc{p}},h')$ between hyperbolic surfaces with cone singularities is an \emph{$\alpha$-landslide} if there exists a conformal structure $c$ on $\Sigma_{\mathpzc{p}}$ (called the \emph{center}) and two harmonic maps isotopic to the identity (fixing the marked points) $g:(\Sigma_{\mathpzc{p}},c)\rightarrow (\Sigma_{\mathpzc{p}}, h)$ and $g':(\Sigma_{\mathpzc{p}},c)\rightarrow (\Sigma_{\mathpzc{p}}, h')$ such that
\[
	f=g'\circ g^{-1} \ \ \ \text{and} \ \ \ \Hopf(g')=e^{2i\alpha}\Hopf(g)\ .
\]
\end{defi}

\indent Landslides are related to the aforementioned $S^{1}$-action on $\dTp$ in the following way. Given a couple $(h,h')\in \dTp$, there exists a unique conformal class $c$ on $\Sigma_{\mathpzc{p}}$ such that the unique harmonic maps $g:(\Sigma_{\mathpzc{p}}, c)\rightarrow (\Sigma_{\mathpzc{p}},h)$ and $g':(\Sigma_{\mathpzc{p}}, c)\rightarrow (\Sigma_{\mathpzc{p}},h')$ isotopic to the identity have opposite Hopf differentials. For every $e^{i\alpha}\in S^{1}$ we can thus define
\begin{align*}
	L_{e^{i\alpha}}:\dTp &\rightarrow \dTp\\
		(h,h') &\mapsto (h_{\alpha}, h_{-\alpha})
\end{align*}
where $(h_{\alpha},h_{-\alpha}) \in \dTp$ are characterised by the fact that the unique harmonic maps isotopic to the identity $g_{\alpha}:(\Sigma_{\mathpzc{p}},c)\rightarrow (\Sigma_{\mathpzc{p}}, h_{\alpha})$ and $g_{-\alpha}:(\Sigma_{\mathpzc{p}},c)\rightarrow (\Sigma_{\mathpzc{p}}, h_{-\alpha})$ satisfy
\[
	\Hopf(g_{\pm\alpha})=e^{\pm i\alpha}\Hopf(g) \ ,
\]
i.e. $g_{\alpha}\circ g_{-\alpha}^{-1}$ is an $\alpha$-landslide with the center $c$.\\

A natural way to construct diffeomorphisms between hyperbolic surfaces is provided by AdS geometry (see for instance \cite{bon_schl}, \cite{BonSepKsurfacesAdS}, \cite{TambuCMC} for more details). Let $S\subset M$ be a space-like surface embedded into a CGHM AdS manifolds with particles, orthogonal to the singular lines. We can construct two hyperbolic metrics on $\Sigma_{\mathpzc{p}}$ from the embedding data of $S$:
\[
	h_{l}=I((E+JB)\cdot, (E+JB)\cdot) \ \ \ \ h_{r}=I((E-JB)\cdot, (E-JB)\cdot)
\]
where $J$ is the unique complex structure compatible with the induced metric on $S$ and $E:TS \rightarrow TS$ is the identity operator. It was verified (\cite{Schlenker-Krasnov}) that $h_{l}$ and $h_{r}$ carry 
cone singularities of the same angle $\theta_{i}$ at the points $p_{i}$ as the surface $S$. Let us denote by
\[
	\pi_{l,r}:(\Sigma_{\mathpzc{p}}, I) \rightarrow (\Sigma_{\mathpzc{p}}, h_{l,r})
\]
the identity maps. In \cite{landslide1} the authors proved that if $S$ is a space-like surface with constant Gauss curvature $-\frac{1}{\cos^{2}(\alpha/2)}$, then $\pi_{l}\circ \pi_{r}^{-1}$ is an $\alpha$-landslide. This result was then extended to the case with conical singularities in \cite{QiyuAdS}. We prove that landslides between hyperbolic metrics with cone singularities (of angles less than $\pi$) can also be constructed in terms of constant mean curvature surfaces embedded in CGHM AdS manifolds with particles.

\begin{prop}Let $S \subset M$ be a space-like surface embedded in a CGHM AdS manifold orthogonal to the singular lines. If $S$ has constant mean curvature $H$, then the map $\pi_{l}\circ \pi_{r}^{-1}$ is an $\alpha$-landslide, where
\[
	\alpha=-\arctan(H)+\frac{\pi}{2} \ .
\]
\end{prop}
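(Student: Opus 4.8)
The plan is to extract from the constant mean curvature condition on $S$ a pair of harmonic maps whose Hopf differentials differ by the correct rotation factor $e^{2i\alpha}$, thereby matching Definition \ref{def:landslides}. The starting point is the explicit description of the two hyperbolic metrics
\[
	h_{l}=I((E+JB)\cdot, (E+JB)\cdot), \qquad h_{r}=I((E-JB)\cdot, (E-JB)\cdot),
\]
built from the embedding data $(I,B)$ of $S$. Since $S$ has constant mean curvature $H$, I would decompose $B=B_{0}+HE$ with $B_{0}$ traceless and Codazzi for $I$; by Proposition \ref{prop:estimatesB} the operator $B_{0}$ is the real part of a meromorphic quadratic differential with at most simple poles at the marked points, and by Proposition \ref{integrability} it is integrable, so $B_0$ extends continuously to $0$ at the singular lines. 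This already guarantees, via Lemma \ref{lm:AdSmeancurvature}, that $h_l$ and $h_r$ have cone singularities of the prescribed angles.

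Next I would show that the identity maps $\pi_{l}$ and $\pi_{r}$ from $(\Sigma_{\mathpzc{p}},I)$ are harmonic and compute their Hopf differentials, exactly in the style of Lemma \ref{lm:Gaussharmonic} and Lemma \ref{lm:projharmonic} from the Minkowski section. Writing $E\pm JB = (1\pm JH)E \pm JB_{0}$, I would pull back $h_{l,r}$ along $\pi_{l,r}$ and isolate the traceless part: the cross terms involving $B_{0}$ produce the holomorphic quadratic differential playing the role of the Hopf differential, while the remaining symmetric part contributes to the energy density. Because $JB_0$ is (up to sign) the conjugate Codazzi operator, the pullbacks decompose as $e(\pi_{l,r})I + \Phi_{l,r}+\overline{\Phi}_{l,r}$ with $\Phi_{l,r}$ holomorphic on $(\Sigma_{\mathpzc{p}},J)$, which forces harmonicity and identifies $\Phi_{l,r}=\Hopf(\pi_{l,r})$. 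The key point is that both Hopf differentials are built from the \emph{same} traceless operator $B_{0}$, so they are real multiples of a common quadratic differential, and their ratio is controlled by the scalar factors $(1\pm iH)$ coming from $H$.

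The heart of the argument — and the step I expect to be most delicate — is computing the precise complex scalar relating $\Hopf(\pi_{l})$ and $\Hopf(\pi_{r})$ and verifying it equals $e^{2i\alpha}$ with $\alpha=-\arctan(H)+\frac{\pi}{2}$. Concretely, I expect to find that $\Hopf(\pi_{l})$ and $\Hopf(\pi_{r})$ are obtained from the differential associated to $B_0$ by multiplication by $(1+iH)^{2}$ and $(1-iH)^{2}$ respectively (the squares arising because $B_0$ appears quadratically in the pullback of a quadratic form), so that
\[
	\frac{\Hopf(\pi_{l})}{\Hopf(\pi_{r})}=\frac{(1+iH)^{2}}{(1-iH)^{2}}.
\]
Writing $1+iH=\sqrt{1+H^{2}}\,e^{i\arctan(H)}$ gives the ratio $e^{4i\arctan(H)}$, and one must check this equals $e^{2i\alpha}$, which pins down $\alpha=2\arctan(H)$ modulo the normalisation conventions; reconciling this with the stated value $\alpha=-\arctan(H)+\frac{\pi}{2}$ requires care with the sign and the branch, presumably using the identity relating the landslide parameter in the mean-curvature picture to the one in the Gauss-curvature picture of \cite{landslide1} and \cite{QiyuAdS}. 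Once the phase is confirmed, the center $c$ of the landslide is the conformal class of $I$, the harmonic maps $g,g'$ isotopic to the identity are $\pi_{r},\pi_{l}$, and $f=\pi_{l}\circ\pi_{r}^{-1}$ is an $\alpha$-landslide by Definition \ref{def:landslides}, completing the proof.
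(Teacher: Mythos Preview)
Your overall strategy is exactly the paper's: show that $\pi_{l},\pi_{r}:(\Sigma_{\mathpzc p},I)\to(\Sigma_{\mathpzc p},h_{l,r})$ are harmonic by extracting the traceless Codazzi part of the pullback, identify that part with the Hopf differential, and compare the two phases. The setup, the decomposition $B=B_{0}+HE$, and the appeal to Propositions \ref{prop:estimatesB} and \ref{integrability} are all correct.

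The gap is in your predicted form of the Hopf differentials. You expect the scalar factors $(1\pm iH)^{2}$ because ``$B_{0}$ appears quadratically in the pullback of a quadratic form''. But in dimension two a traceless self-adjoint operator satisfies $B_{0}^{2}=\lambda^{2}E$, so the quadratic term in $B_{0}$ is a multiple of the identity and contributes only to the energy density, not to the Hopf differential. Concretely, using $(E-JB)^{*}=E+BJ$ and $JB_{0}=-B_{0}J$, the paper computes
\[
(E+BJ)(E-JB)=(1+\lambda^{2}+H^{2})E+2(HE-J)B_{0},
\]
so the traceless part is \emph{linear} in $B_{0}$ with operator coefficient $HE\mp J$. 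Translated into quadratic differentials this gives $\Hopf(\pi_{l,r})$ proportional to $(H\pm i)$ times the differential associated to $B_{0}$, hence
\[
\frac{\Hopf(\pi_{l})}{\Hopf(\pi_{r})}=\frac{H+i}{H-i}=e^{2i\alpha},\qquad \alpha=-\arctan(H)+\tfrac{\pi}{2},
\]
which is exactly the stated value. Your ratio $\frac{(1+iH)^{2}}{(1-iH)^{2}}=e^{4i\arctan(H)}$ is genuinely different (not a branch or sign issue) and would give the wrong $\alpha$; once you correct the algebra of the traceless part, no further reconciliation with the Gauss-curvature picture is needed.
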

\begin{proof} It is sufficient to prove that $\pi_{l,r}:(\Sigma_{\mathpzc{p}}, I) \rightarrow (\Sigma_{\mathpzc{p}}, h_{l,r})$ are harmonic with
\[
	\Hopf(\pi_{l})=e^{2i\alpha}\Hopf(\pi_{r}) \ .
\]
In order to prove that $\pi_{r}$ is harmonic, it is sufficient to prove that if we write
\[
	\pi_{r}^{*}h_{r}=I(\cdot, b\cdot) \ ,
\]
then the traceless part of $b$ is Codazzi for $I$. By definition of the map $\pi_{r}$, we know that
\[
	\pi_{r}^{*}h_{r}=I((E-JB)\cdot, (E-JB)\cdot)=I(\cdot, (E-JB)^{*}(E-JB)) \ ,
\]
where $(E-JB)^{*}$ denotes the adjoint operator of $(E-JB)$ for the metric $I$. Since $B$ is $I$-self-adjoint and $J$ is skew-symmetric for $I$, we deduce that $(E-JB)^{*}=(E+BJ)$, thus $b=(E+BJ)(E-JB)$. Let us now decompose the operator $B$ as $B=B_{0}+HE$, where $B_{0}$ is traceless. Then
\begin{align*}
	(E+BJ)(E-JB)&=E+BJ-JB+B^{2}\\
		&=E+B_{0}J+HJ-JB_{0}-HJ+B_{0}^{2}+H^{2}E+2HB_{0}\\
		&=(1+H^{2})E+2(HE-J)B_{0}+B_{0}^{2}\\
		&=(1+\lambda^{2}+H^{2})E+2(HE-J)B_{0}
\end{align*}
where, in the last line, we have used the fact that $B_{0}^{2}=\lambda^{2}E$, $\lambda$ being the positive eigenvalue of $B_{0}$. We deduce that the traceless part $b_{0}$ of $b$ is given by $b_{0}=2(HE-J)B_{0}$, which is Codazzi because $H$ is constant, $B$ is Codazzi and $J$ is integrable and compatible with the metric $I$ (hence $d^{\nabla^{I}}J=0$). Therefore, $Ib_{0}$ is the real part of a meromorphic quadratic differential $2\Phi_{r}$ and the metric $\pi_{r}^{*}h_{r}$ can be written as
\[
	\pi_{r}^{*}h_{r}=I(\cdot, b\cdot)=e_{r}I+\Phi_{r}+\overline{\Phi_{r}}
\]
for some function $e_r:\Sigma_{\mathpzc{p}}\rightarrow \R^{+}$, which is the energy density of the map $\pi_{r}$. Since $\Phi_{r}$ is holomorphic for the complex structure $J$, this shows that $\pi_{r}$ is harmonic with Hopf differential
\[
	\Hopf(\pi_{r})=\Phi_{r}=\frac{1}{2}\{I(HE-J)B_{0}+iJI(HE-J)B_{0}\} \ .
\]
We claim that
\[
	\frac{\Hopf(\pi_{l})}{\Hopf(\pi_{r})}=\frac{H+i}{H-i}\ .
\]
With the result in \cite{TambuCMC}, it suffices to check near the cone singularities. In the conformal coordinate $z$ near $p_i$ for $I$, we have $I=e^{2u_i}|z|^{2\beta_i}|dz|^{2}$ and
\[
	\Hopf(\pi_{r})=e^{2u_i}|z|^{2\beta_i}(H-i)\lambda^2 dz^{2} \ .
\]
A similar computation shows that $\pi_{l}$ is harmonic with Hopf differential
\[
	\Hopf(\pi_{l})=e^{2u_i}|z|^{2\beta_i}(H+i)\lambda^2 dz^{2} \ .
\]
We deduce that
\[
	\frac{\Hopf(\pi_{l})}{\Hopf(\pi_{r})}=\frac{H+i}{H-i}=e^{2i\alpha}
\]
with
\[
	\alpha=-\arctan(H)+\frac{\pi}{2}
\]
and that $\pi_{l}\circ \pi_{r}^{-1}$ is an $\alpha$-landslide.
\end{proof}

\section{The de-Sitter case}\label{sec:dS}
The main result of this section is the following:

\begin{teo}\label{mainteodS}Let $M$ be a CGHM de Sitter manifold with particles. For every $H\in (-\infty, -1)$, there exists a unique space-like surface $S_{H}$ embedded in $M$, orthogonal to the singular locus and with constant mean curvature $H$. Moreover, the surfaces $(S_{H})_{H \in(-\infty,-1)}$ provide a foliation of the manifold $M$ (with $H$ varying from $-\infty$ near the initial singularity to $-1$ near the boundary at infinity).
\end{teo}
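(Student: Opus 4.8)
The plan is to adapt the strategy of the anti-de Sitter case (Section~\ref{sec:AdS}) essentially verbatim, replacing Theorem~\ref{KfoliationAdS} by its de Sitter counterpart from \cite{QiyudS}: for every $K\in(-\infty,0)$ there is a unique convex space-like surface $\Sigma_{K}$ orthogonal to the singular lines with constant Gauss curvature $K$, and these surfaces foliate $M$ (with $K\to-\infty$ at the initial singularity and $K\to 0^{-}$ at the boundary at infinity). The first step is to record the evolution of the principal curvatures along the normal flow $\psi^{t}$ in the de Sitter model. An identical computation to Lemma~\ref{lm:principalcurvaturesMink} and Lemma~\ref{principalcurvaturesAdS}, now with ambient sectional curvature $+1$, produces the hyperbolic-tangent dependence
\[
	\lambda_{t}(\psi^{t}(x))=\frac{\lambda(x)-\tanh t}{1-\lambda(x)\tanh t}\ , \qquad \mu_{t}(\psi^{t}(x))=\frac{\mu(x)-\tanh t}{1-\mu(x)\tanh t}\ ,
\]
and these formulas persist at the singular points whenever the principal curvatures are defined there.

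The core of the argument is the de Sitter analogue of Lemma~\ref{keylemmaAdS}. Starting from $\Sigma_{K}$, where the Gauss equation gives $\det(B)=\kappa=1-K$ constant, I would compute $dH_{t}$ for $H_{t}=\tfrac12(\lambda_{t}+\mu_{t})$; after substituting $\mu=\kappa/\lambda$ and using $\mu\,d\lambda+\lambda\,d\mu=0$, the condition $dH_{t}\equiv0$ reduces to
\[
	(\lambda^{2}-\kappa)(1-\kappa\tanh^{2}t)=0\ ,
\]
whose non-umbilic factor yields $\tanh t=1/\sqrt{\kappa}$, admissible precisely when $\kappa>1$, i.e. $K<0$. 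Evaluating the mean curvature of the resulting surface gives the explicit functions
\[
	f(K)=-\frac{2-K}{2\sqrt{1-K}}\ , \qquad d(K)=\arctanh\!\left(\frac{1}{\sqrt{1-K}}\right)\ ,
\]
so that $\psi^{d(K)}(\Sigma_{K})$, lying in the future of $\Sigma_{K}$, has constant mean curvature $f(K)$. One checks that $f:(-\infty,0)\to(-\infty,-1)$ is a strictly monotone continuous bijection; hence for every $H\in(-\infty,-1)$ the surface $S_{H}:=\psi^{d(K)}(\Sigma_{K})$ with $K=f^{-1}(H)$ is a space-like surface orthogonal to the singular lines of constant mean curvature $H$. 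As in Lemma~\ref{lm:AdSmeancurvature}, Proposition~\ref{prop:estimatesB} and Proposition~\ref{integrability} show that $S_{H}$ is umbilic at the cone points with principal curvatures equal to $H$, which will be needed for the Maximum Principle.

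With existence settled I would show that $\{S_{H}\}_{H\in(-\infty,-1)}$ foliates $M$, mirroring Proposition~\ref{existencefoliationAdS}. Continuous dependence $S_{H_{n}}\to S_{H}$ follows from the convergence of the $K$-surfaces (the $K$-foliation of \cite{QiyudS}) together with the continuity of $d$ and $f$, so the union $\Omega=\bigcup_{H}S_{H}$ is open; to conclude $\Omega=M$ I would exploit the asymptotics $\lim_{K\to 0^{-}}d(K)=+\infty$ and $\lim_{K\to-\infty}d(K)=0$, which drive $S_{H}$ toward the boundary at infinity as $H\to-1$ and toward the initial singularity as $H\to-\infty$, so the leaves exhaust $M$ at both ends. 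Disjointness of distinct leaves then follows from the disjointness of the $\Sigma_{K}$ and the monotonicity of the construction, exactly as in the AdS and Minkowski cases.

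Finally, uniqueness is a transcription of Proposition~\ref{prop:uniqueCMC}. Given any space-like surface $S$ orthogonal to the singular lines with constant mean curvature $H$, it is umbilic at the singularities, so the function $F:M\to\R$ whose level sets are the $S_{H}$ attains its maximum and minimum at tangency points with leaves of the foliation; applying the Maximum Principle at regular points (\cite{BBZAdSCMC}) and, at singular points, for convex umbilical surfaces (\cite[Lemma 3.12]{QiyuAdS}) forces $H_{\max}\le H\le H_{\min}\le H_{\max}$, whence $S=S_{H}$. I expect the main obstacle to be the exhaustion $\Omega=M$ at the two ends: making rigorous that the CMC leaves reach both the initial singularity and the boundary at infinity, through the asymptotic analysis of $d(K)$ and the known behaviour of the $K$-foliation of \cite{QiyudS} there. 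The curvature computation is routine, and uniqueness follows mechanically once umbilicity at the cone points is in hand.
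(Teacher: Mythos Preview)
Your proposal is correct and follows essentially the same approach as the paper. The paper computes the same evolution formulas (Lemma~\ref{principalcurvaturesdS}), the same equidistant-surface lemma (Lemma~\ref{keylemmadS}, with your $f,d$ written as $\xi,\delta$), establishes umbilicity at the cone points by the argument of Lemma~\ref{lm:AdSmeancurvature}, proves the foliation via the asymptotics $\delta(K)\to 0$ as $K\to-\infty$ and the divergence of the $K$-leaves toward the boundary at infinity as $K\to 0^{-}$ (Proposition~\ref{existencefoliationdS}), and obtains uniqueness by the Maximum Principle exactly as you outline (Proposition~\ref{prop:uniqueness of CMC for dS}).
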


To prove this, we will use the following recent result proved by the first author in a joint work with Jean-Marc Schlenker:

\begin{teo}[\cite{QiyudS}]\label{KfoliationdS} Let $M$ be a CGHM de Sitter manifold with particles. For every $K\in (-\infty, 0)$ there exists a unique future-convex space-like surface $\Sigma_{K}$ orthogonal to the singular locus and with constant Gauss curvature $K$. Moreover, the surfaces $(\Sigma_{K})_{K \in (-\infty, 0)}$ provide a $C^2$-foliation of $M$ outside the singular locus (with $K$ varying from $-\infty$ near the initial singularity to $0$ near the boundary at infinity).
\end{teo}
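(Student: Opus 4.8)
The plan is to prove, for each fixed $K \in (-\infty,0)$, the existence and uniqueness of a single future-convex $K$-surface orthogonal to the singular lines, and only then assemble these surfaces into the foliation. The structural observation that drives everything is this: since the de Sitter model has sectional curvature $+1$, the Gauss equation for a $K$-surface reads $\det(B)=1-K$, a positive constant. As $B$ is also Codazzi for the induced metric $I$, the third fundamental form $\III=I(B\cdot,B\cdot)$ has Gauss curvature $K_{\III}=K_I/\det(B)=K/(1-K)<0$, by the same connection computation as in Lemma \ref{lm:existenceKsurface}. Hence, after the constant rescalings $h=|K|\,I$ and $h'=\tfrac{|K|}{1-K}\,\III$, both $h$ and $h'$ are hyperbolic metrics with cone singularities of angles $\bm{\theta}$, and the operator $b=-B/\sqrt{1-K}$ satisfies $\det(b)=1$, is $h$-self-adjoint with positive eigenvalues, is Codazzi, realizes $m^{*}h'=h(b\cdot,b\cdot)$, and tends to the identity at the punctures (since $K$-surfaces are umbilical there, so $\lambda=\mu=-\sqrt{1-K}$, by Proposition \ref{prop:estimatesB}). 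By Lemma \ref{lm:minimal Lagrangian}, the identity $m\colon(\Sigma_{\mathpzc{p}},h)\to(\Sigma_{\mathpzc{p}},h')$ is then minimal Lagrangian. Thus a $K$-surface is equivalent to a pair of hyperbolic cone metrics $(h,h')$, compatible with the de Sitter structure of $M$, related by a minimal Lagrangian map with prescribed normalization $1-K$.

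\textbf{Existence.} Following the pattern of Section \ref{sec:Minkowsky} (Theorem \ref{thm:newparameterisation}) and of the anti-de Sitter construction in \cite{QiyuAdS}, I would, for each fixed $K<0$, build a map $\Phi_K$ from a Teichm\"uller-type parameter space (encoding the center $c$ of the minimal Lagrangian map of the reduction) into the deformation space of CGHM de Sitter manifolds with particles, sending the data to the unique dS manifold containing a $K$-surface with those embedding data, and prove $\Phi_K$ is a homeomorphism. Reconstructing the surface from fixed data reduces, in a conformal gauge $I=e^{2u}g_c$, to solving the Gauss equation as a Liouville-type elliptic PDE whose $\det$-nonlinearity is handled exactly as in Equation (\ref{eq:PDE}), via a strictly convex variational functional; existence and uniqueness of the auxiliary harmonic maps is supplied by \cite[Theorem 2]{GR}. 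The a priori estimates that make the method run come from the Gauss--Bonnet formula for cone surfaces (\cite[Proposition 1]{Tro}): integrating the Gauss equation bounds the $L^2$-norm of the traceless part of $B$ and confines the relevant metrics to a compact subset of $\Tp$, as in Proposition \ref{integrability} and Lemma \ref{lm:keylemmaproperness}. (Alternatively, one may transport the classical constant-Gauss-curvature foliation of a hyperbolic end with particles across the projective duality between $dS_3$ and $\h^3$, under which the shape operator is sent, up to sign, to its inverse, so that reciprocal principal curvatures identify the two problems.)

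\textbf{Uniqueness and disjointness.} Uniqueness for fixed $K$ follows from the maximum principle combined with monotonicity of the normal flow. If $M$ contained two $K$-surfaces, I would push one along the future-directed normal flow, whose effect on the principal curvatures is the de Sitter analogue of the formulas in Lemma \ref{lm:principalcurvaturesMink} and Lemma \ref{principalcurvaturesAdS}, until it becomes tangent from one side to the other. At a regular tangency point, comparison of the Gauss curvatures by the classical maximum principle (\cite{BBZAdSCMC}) contradicts the strict monotonicity of $K$ along the flow unless the surfaces coincide; at a singular tangency point the same conclusion holds because $K$-surfaces are umbilical at the particles, so the maximum principle for umbilical surfaces (\cite[Lemma 3.12]{QiyuAdS}) applies. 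The identical argument run between a $K$- and a $K'$-surface shows $\Sigma_K\cap\Sigma_{K'}=\emptyset$ for $K\neq K'$ and orders the family monotonically in $M$.

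\textbf{The foliation and the main obstacle.} To conclude that $\{\Sigma_K\}_{K<0}$ foliates $M$, I would first show $K\mapsto\Sigma_K$ is continuous, indeed $C^2$, via the convergence theory for sequences of convex space-like surfaces of Barbot--Zeghib (\cite{barbotzeghib}) and Schlenker (\cite{schlenkersurfconv}), exactly as in the proof of Theorem \ref{thm:KfoliationMink}, the limit being again a $K$-surface by uniqueness. That the family exhausts $M$ follows from the area formula $\mathrm{Area}(\Sigma_K)=\tfrac{2\pi}{K}\chi(\Sigma,\bm{\theta})$: the area tends to $0$ as $K\to-\infty$, forcing collapse onto the initial big-bang singularity, and to $+\infty$ as $K\to 0^-$, forcing escape to the boundary at infinity, while boundary-of-the-union arguments as in Theorem \ref{thm:KfoliationMink} rule out accumulation on a convex barrier strictly inside $M$. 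The hard part throughout is the a priori control near the cone singularities: one must guarantee that the surfaces produced by the continuity method are genuinely future-convex and orthogonal to the singular lines with the weighted H\"older regularity required by \cite{GR}, and that the equidistant and normal-flow constructions remain non-degenerate up to the particles. Securing these uniform estimates near the singular locus, and thereby the properness of $\Phi_K$, is the technical crux of the argument.
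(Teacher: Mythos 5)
The first thing to note is that the paper contains no proof of this statement: Theorem \ref{KfoliationdS} is imported verbatim from \cite{QiyudS}, so your attempt can only be measured against that external reference, not against an internal argument. Your reduction is correct as far as it goes: with the paper's sign conventions the Gauss equation gives $\det(B)=1-K>0$, the connection computation of Lemma \ref{lm:existenceKsurface} yields $K_{\III}=K/(1-K)$, the rescalings $h=|K|\,I$ and $h'=\tfrac{|K|}{1-K}\,\III$ are hyperbolic with cone angles $\bm{\theta}$, and $b=-B/\sqrt{1-K}$ satisfies every hypothesis of Lemma \ref{lm:minimal Lagrangian} (including the eigenvalues tending to $1$ at the punctures, since at an umbilic singular point $\lambda=\mu=-\sqrt{1-K}$). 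Notably, the parenthetical "alternative" you mention in passing is in fact the route actually taken in \cite{QiyudS}: the dS spacetime is paired with a hyperbolic end with particles via projective duality, under which the shape operator is essentially inverted, and the $K$-foliation is transported from the known constant-curvature foliation of hyperbolic ends; this is exactly why the present paper can invoke "the dual result in \cite[Theorem 1.5]{QiyuAdS}" in Section \ref{sec:dS} to conclude that the $\Sigma_K$ are umbilical at the particles.

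That said, your main existence paragraph is a program rather than a proof, and you concede as much: the map $\Phi_K$ is never constructed, its source space is not pinned down, well-definedness, continuity, injectivity and properness are all asserted by analogy with Theorem \ref{thm:newparameterisation}, and the uniform estimates near the cone singularities (weighted H\"older regularity in the sense of \cite{GR}, convexity and orthogonality up to the particles) are precisely where the entire technical content of \cite{QiyudS} and its AdS antecedent \cite{QiyuAdS} lives. Your uniqueness and disjointness arguments are sound and match what this paper does carry out in the Minkowski and AdS cases: normal-flow monotonicity from Lemma \ref{principalcurvaturesdS} (valid at singular points by Remark \ref{rk: dS singular points}), the classical maximum principle \cite{BBZAdSCMC} at regular tangencies, and the umbilical maximum principle \cite[Lemma 3.12]{QiyuAdS} at singular ones; the Gauss--Bonnet area formula correctly controls the behaviour as $K\to-\infty$ and $K\to 0^-$. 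So the verdict is: correct skeleton, correct minimal-Lagrangian reduction, and you identified the duality idea that the real proof uses, but as a standalone argument the existence/compactness core is deferred rather than supplied, leaving a genuine gap exactly at the point you yourself flag as the technical crux.
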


The main idea of the proof of Theorem \ref{mainteodS} is similar to that for AdS case. We will verify that there exist two functions $\delta:(-\infty,0) \rightarrow \R^{+}$ and $\xi:(-\infty, 0) \rightarrow \R$ such that, given a $K$-surface $\Sigma_{K}$, the surface equidistant to $\Sigma_{K}$ in the future of $\Sigma_{K}$ at time-like distance $\delta(K)$ has constant mean curvature $\xi(K)$. Moreover, we will show that $\xi(K)$ takes value over $(-\infty,-1)$ as $K$ takes value over $(-\infty,0)$. This implies that we find a surface $S_H$ of constant mean curvature $H$ in $M$ for every $H<-1$.  Then we deduce that the family $(S_{H})_{H\in(-\infty,-1)}$ must foliate the whole manifold $M$ by Theorem \ref{KfoliationdS}.\\

Now we recall the behavior of the principal curvatures of the equidistant space-like surfaces in a CGHM de Sitter manifold with particles along the normal flow:

\begin{lemma}\label{principalcurvaturesdS} Let $S$ be a future-convex space-like surface embedded into a CGHM de Sitter manifold $M$ with particles. Let $\psi^{t}:S \rightarrow M$ be the flow along the future-directed unit normal vector field $\eta$ on $S$. For every regular point $x \in S$, we have
\begin{enumerate}
	\item the induced metric and the shape operator on $\psi^{t}(S)$ are
\begin{equation*}
\begin{split}
	I_{t}&=I\big((\cosh (t)E-\sinh (t)B)\cdot, (\cosh (t)E-\sinh (t)B)\cdot\big),\\
	B_{t}&=\big(\cosh (t)E-\sinh (t)B\big)^{-1}\big(\cosh (t) B-\sinh (t) E\big),
\end{split}
\end{equation*}	
where $I$ and $B$ are the induced metric and the shape operator on $S$.
	\item the principal curvatures of $\psi^{t}(S)$ at the point $\psi^{t}(x)$ are given by
\[
	\lambda_{t}(\psi^{t}(x))=\frac{\lambda(x)-\tanh (t)}{1-\tanh (t)\lambda(x)} \ \ \ \ \ \ \ \ \
	\mu_{t}(\psi^{t}(x))=\frac{\mu(x)-\tanh (t)}{1-\tanh (t)\mu(x)} \ ;
\]
	\item $\psi^{t}$ is an embedding in a neighbourhood of $x$ if $t$ satisfies $\tanh (t)\lambda(x)\neq 1$ and $\tanh (t)\mu(x)\neq 1$, where $\lambda(x)$ and $\mu(x)$ are the principal curvatures of $S$ at $x$.

\item If $\lambda(x)\mu(x)>1$, then the function $F(t)=\lambda_{t}(\psi^{t}(x))\mu_{t}(\psi^{t}(x))$ is strictly decreasing in $(0, +\infty)$.
\end{enumerate}
\end{lemma}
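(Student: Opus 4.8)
The plan is to treat parts (1)--(3) as the de Sitter analogue of Lemma \ref{lm:principalcurvaturesMink}, the only difference being the shape of the geodesics orthogonal to $S$, and to isolate part (4) as the one genuinely new computation. First I would realise $M$ locally as $dS_3\subset\R^{3,1}$ and write the normal flow explicitly. If $\sigma:S\to dS_3$ is the embedding and $\eta$ the future-directed unit timelike normal (so $\langle\eta,\eta\rangle_{3,1}=-1$ and $\langle\sigma,\eta\rangle_{3,1}=0$), then the unit-speed timelike geodesic issuing from $x=\sigma(y)$ in the direction $\eta$ is
\[
\Psi(y,t)=\cosh(t)\,\sigma(y)+\sinh(t)\,\eta(y),
\]
which one checks stays on $dS_3$. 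Writing $D$ for the flat connection of $\R^{3,1}$ and $\nabla$ for that of $M$, the second fundamental form of $dS_3\subset\R^{3,1}$ contributes a term $-\langle v,\eta\rangle_{3,1}\sigma$ that vanishes along $S$ (as $v$ is tangent and $\eta$ normal inside $dS_3$), so $D_v\eta=\nabla_v\eta=-B(v)$ and hence $d\Psi(v)=d\sigma\big((\cosh(t)E-\sinh(t)B)v\big)$.

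This immediately yields the stated formula for $I_t$, and the well-known identity $\II_t=-\tfrac12\frac{dI_t}{dt}$, together with the fact that $B$ is $I$-self-adjoint (so that $A_t:=\cosh(t)E-\sinh(t)B$ is $I$-self-adjoint and commutes with $\dot A_t$), gives $B_t=I_t^{-1}\II_t=-A_t^{-1}\dot A_t=(\cosh(t)E-\sinh(t)B)^{-1}(\cosh(t)B-\sinh(t)E)$, which is exactly (1). Parts (2) and (3) then follow formally: since $A_t$ and $B_t$ are rational functions of $B$, they are simultaneously diagonalised with $B$, so the eigenvalue $\lambda$ of $B$ produces the eigenvalue $(\lambda-\tanh t)/(1-\tanh t\,\lambda)$ of $B_t$, and $d\Psi$ is injective exactly when the eigenvalues $\cosh(t)-\sinh(t)\lambda$, $\cosh(t)-\sinh(t)\mu$ of $A_t$ are nonzero, i.e. when $\tanh(t)\lambda\neq1$ and $\tanh(t)\mu\neq1$.

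For part (4) I would substitute $s=\tanh(t)$, an increasing bijection $(0,+\infty)\to(0,1)$, so that monotonicity of $F$ in $t$ is equivalent to monotonicity of
\[
G(s)=\frac{(\lambda-s)(\mu-s)}{(1-s\lambda)(1-s\mu)}=\frac{p-ms+s^{2}}{1-ms+ps^{2}}
\]
in $s$, where $p=\lambda\mu$ and $m=\lambda+\mu$. A direct differentiation should collapse the numerator of $G'$, after the $s^{3}$-terms cancel, to
\[
(p-1)\big[m(1+s^{2})-2(p+1)s\big].
\]
The decisive step is the sign analysis: by hypothesis $p=\lambda\mu>1$, so $p-1>0$ and $p+1>0$; and since $S$ is future-convex its principal curvatures are negative, so $m=\lambda+\mu<0$. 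Hence for $s\in(0,1)$ the bracket $m(1+s^{2})-2(p+1)s$ is a sum of two strictly negative terms, giving $G'(s)<0$ and therefore $F'(t)<0$ on $(0,+\infty)$. The same sign information shows $1-s\lambda,\,1-s\mu>0$, so $G$ has no pole on $(0,1)$, consistent with (3).

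The two differentiations in (1) and the expansion in (4) are routine; the only places demanding genuine care are the bookkeeping in (1)---correctly checking that the $dS_3\subset\R^{3,1}$ normal term drops out so that $D_v\eta=-B(v)$ with no correction---and, in (4), the cancellation of the $s^{3}$-term and the clean factorisation through $p-1$, which is precisely what lets the convexity sign $m<0$ close the argument.
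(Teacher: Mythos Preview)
Your proof is correct and follows essentially the same route as the paper: the paper also reduces to the local model, writes the normal geodesic as $\gamma_x(t)=\cosh(t)\sigma(y)+\sinh(t)\eta(\sigma(y))$, computes $I_t$ by differentiating, and obtains $B_t$ via $\II_t=-\tfrac12\,dI_t/dt$, declaring (2)--(4) to be direct consequences. Your explicit sign analysis for (4) via the substitution $s=\tanh t$ and the factorisation $(p-1)\big[m(1+s^2)-2(p+1)s\big]$ is more than the paper provides (it simply asserts (4) follows by computation), and your use of future-convexity to get $m<0$ is exactly the missing observation that makes that computation close; this is a useful addition rather than a departure.
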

\begin{proof} Since Statements (2), (3) and (4) follow by a direct computation from Statement (1), we just prove (1). It is sufficient to prove it for $M=dS_{\theta}$. Let $\sigma:S \rightarrow M$ be the embedding. The geodesic orthogonal to $S$ at a point $x=\sigma(y)$ can be written as
\[
	\gamma_{x}(t)=\cosh (t)\sigma(y)+\sinh (t)\eta(\sigma(y)) \ .
\]
Therefore,
\begin{align*}
	I_{t}(v,w)&=\langle d\gamma_{x}(t)(v),d\gamma_{x}(t)(w)\rangle_{3,1} \\
		&=\langle \cosh (t) d\sigma_{x}(v)+\sinh (t) d(\eta \circ \sigma)_{x}(v), \cosh (t) d\sigma_{x}(w)+\sinh (t) d(\eta \circ \sigma)_{x}(w) \rangle_{3,1}\\
		&=\sigma_{x}^{*}g_{dS_{\theta}}(\cosh (t) v-\sinh (t)Bv,\cosh (t) w-\sinh (t)Bw)\\
		&=I((\cosh (t)E-\sinh (t)B)v, (\cosh (t) E-\sinh (t)B)w) \ .
\end{align*}
Note that $\II_{t}=-\frac{1}{2}\frac{dI_{t}}{dt}$, hence
\[
	\II_{t}=-\frac{1}{2}\frac{dI_{t}}{dt}=I\big((\sinh (t))E-\cosh (t)B)\cdot, \sinh(t)B-\cosh (t)E)\cdot\big),
\]
and
\[
	B_{t}=I_{t}^{-1}\II_{t}=\big(\cosh (t)E-\sinh (t)B\big)^{-1}\big(\cosh (t)B-\sinh (t)E\big)  \ .
\]
\end{proof}
\begin{remark}\label{rk: dS singular points}
The formulas in Lemma \ref{principalcurvaturesdS} hold true also at the singular points, whenever the principal curvatures at those points are well-defined.
\end{remark}

Note that with our convention a future-convex space-like surface $S$ has always negative principal curvatures, thus the equidistant surfaces $\psi^{t}(S)$ obtained by moving along the normal flow in the future of $S$ are well-defined for every $t>0$.

\begin{lemma}\label{keylemmadS}Let $\Sigma_{K}$ be a future-convex space-like surface with constant Gauss curvature $K<0$, orthogonal to the singular lines. Then the surface $\psi^{\delta(K)}(\Sigma_{K})$ in the future of $\Sigma_{K}$ has constant mean curvature
\[
	\xi(K)=\frac{K-2}{2\sqrt{1-K}} \ \ \ \ \ \ \ \ \ \text{if} \ \ \ \ \ \ \ \ \ 
	\delta(K)=\arctanh \left(\sqrt{\frac{1}{1-K}}\right)
\]
\end{lemma}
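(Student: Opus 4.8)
The plan is to follow the blueprint of the anti-de Sitter argument in Lemma \ref{keylemmaAdS}, replacing the trigonometric normal-flow formulas by the hyperbolic ones of Lemma \ref{principalcurvaturesdS}. Denote by $\lambda$ and $\mu$ the principal curvatures of $\Sigma_K$. Since $\Sigma_K$ has constant Gauss curvature $K$ and the ambient sectional curvature is $+1$, the Gauss equation gives $\det(B)=1-K$, so the product $\kappa:=\lambda\mu=1-K$ is a positive constant. Writing $H_t=\tfrac{1}{2}(\lambda_t+\mu_t)$ for the mean curvature of $\psi^{t}(\Sigma_K)$, it suffices to exhibit a time $t>0$ at which $dH_t\equiv 0$, so that $H_t$ is constant on the surface.

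Next I would differentiate the expressions for $\lambda_t$ and $\mu_t$ in Lemma \ref{principalcurvaturesdS}(2). A direct computation shows the pleasant simplification $d\lambda_t=(1-\tanh^2 t)(1-\tanh t\,\lambda)^{-2}\,d\lambda$ and likewise for $\mu_t$, so that $dH_t=0$ is equivalent to $d\lambda\,(1-\tanh t\,\mu)^{2}+d\mu\,(1-\tanh t\,\lambda)^{2}=0$. Substituting $\mu=\kappa/\lambda$ and using $d\kappa=0$ (hence $\mu\,d\lambda+\lambda\,d\mu=0$), the whole expression factors as
\[
	(\lambda^{2}-\kappa)(1-\kappa\tanh^{2}t)\,d\lambda=0 \ .
\]
The crucial observation is that the factor $1-\kappa\tanh^{2}t$ depends only on the globally constant quantities $\kappa$ and $t$; hence, choosing $t$ with $\tanh^{2}t=1/\kappa$ forces $dH_t$ to vanish identically on all of $\Sigma_K$, not merely at isolated points. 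This yields $\delta(K)=\arctanh\!\big(\sqrt{1/(1-K)}\big)$, which is well-defined and strictly positive exactly because $\kappa=1-K>1$ when $K<0$, so that $\tanh\delta(K)\in(0,1)$ and the forward normal flow is admissible.

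Finally I would substitute $\tanh\delta(K)=1/\sqrt{\kappa}$ into $H_{\delta(K)}=\tfrac{1}{2}(\lambda_{\delta(K)}+\mu_{\delta(K)})$ and simplify, repeatedly using $\lambda\mu=\kappa$ and $\tanh^{2}\delta(K)\,\kappa=1$; the numerator and denominator of the resulting fraction share the factor $2\sqrt{\kappa}-(\lambda+\mu)$, which cancels to leave $H_{\delta(K)}=-\tfrac{\kappa+1}{2\sqrt{\kappa}}=\tfrac{K-2}{2\sqrt{1-K}}=\xi(K)$. Since $\Sigma_K$ is orthogonal to the singular lines and umbilical there, the induced metric on $\psi^{\delta(K)}(\Sigma_K)$ carries cone singularities of the same angles and, by Remark \ref{rk: dS singular points}, the curvature formulas extend continuously to the singular points, so $H_{\delta(K)}=\xi(K)$ holds on the whole surface. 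I expect the only genuine subtlety to be conceptual rather than computational: recognizing that the factorization above makes $dH_t$ vanish \emph{identically} precisely because $\kappa$ is a global constant; the remaining algebra, though lengthy, is entirely routine and parallels the AdS computation.
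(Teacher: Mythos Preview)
Your proposal is correct and follows essentially the same approach as the paper's own proof: differentiate $H_t$ using the hyperbolic normal-flow formulas of Lemma \ref{principalcurvaturesdS}, obtain the factorization $(\lambda^{2}-\kappa)(1-\kappa\tanh^{2}t)\,d\lambda=0$, set the constant factor to zero to determine $\delta(K)$, and then substitute back to compute $\xi(K)$. Your additional remarks on well-definedness of $\delta(K)$ and the extension to singular points via Remark \ref{rk: dS singular points} are welcome clarifications not spelled out in the paper's proof.
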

\begin{proof}Denote by $\lambda$ and $\mu$ the principal curvatures of the surface $\Sigma_{K}$. By assumption (and by the Gauss equation) the product $\kappa=\lambda\mu$ is constant. Denote by $\lambda_{t}$ and $\mu_{t}$ the principal curvatures of the surface $\psi^{t}(\Sigma_{K})$. We want to show that there exists a time $t>0$, such that the mean curvature $H_{t}=\frac{\mu_{t}+\lambda_{t}}{2}$ is constant on $\psi^t(\Sigma_K)$.
To prove this, it suffices to find $t>0$ such that $dH_{t}=0$. By Lemma \ref{principalcurvaturesdS}, we have
\begin{align*}
	2dH_{t}=(d\lambda_{t}+d\mu_{t})= \frac{(1-\tanh^2(t))d\lambda}{(1-\tanh(t)\lambda)^2} +\frac{(1-\tanh^2(t))d\mu}{(1-\tanh(t)\mu)^2} \ .
\end{align*}
This vanishes identically if and only if
\begin{equation}\label{eq:differential of H_t for dS}
\begin{split}
0&=d\mu(1-\tanh^{2}(t))(1-\lambda\tanh(t))^{2}
+d\lambda(1-\tanh^{2}(t))(1-\mu\tanh(t))^{2}\\ &=(1-\tanh^{2}(t))[d\mu(1+\lambda^{2}\tanh^{2}(t))
+d\lambda(1+\mu^{2}\tanh^{2}(t))]\\
&-2(1-\tanh^{2}(t))\tanh(t)(\lambda d\mu+\mu d\lambda) \ .
\end{split}
\end{equation}

By substituting $\mu=\frac{\kappa}{\lambda}$ in \eqref{eq:differential of H_t for dS} and using the fact that $0=d(\kappa)=d(\lambda\mu)=\mu d\lambda+\lambda d\mu$, we get
\begin{align*}
	0&=\frac{1}{\lambda^{2}}(1-\tanh^{2}(t))
(-\kappa d\lambda+\lambda^{2}d\lambda
-\kappa\tanh^{2}(t)\lambda^{2}d\lambda
+\kappa^{2}\tanh^{2}(t)d\lambda)\\	&=\frac{d\lambda}{\lambda^{2}}(1-\tanh^{2}(t))(\lambda^{2}-\kappa)(1-\kappa\tanh^{2}(t))\ .
\end{align*}
Let $\delta(K)$ denote the time function $t$ with respect to $K$ for which $1-\kappa\tanh^{2}(t)=0$. Its expression follows from the Gauss equation that $\kappa=1-K$.
The formula for the mean curvature follows then by a direct computation using the Lemma \ref{principalcurvaturesdS}:
\begin{equation*}
\begin{split}
	2H_{\delta(K)}&=\lambda_{\delta(K)}+\mu_{\delta(K)}
=\frac{\lambda-\sqrt{\frac{1}{\lambda\mu}}}{1+\sqrt{\frac{\lambda}{\mu}}}
+\frac{\mu-\sqrt{\frac{1}{\lambda\mu}}}{1+\sqrt{\frac{\mu}{\lambda}}}\\
	&=\frac{\lambda+\frac{1}{\lambda}-2\sqrt{\lambda\mu}
-2\sqrt{\frac{1}{\lambda\mu}}+\mu+\frac{1}{\mu}}
{2+\sqrt{\frac{\lambda}{\mu}}+\sqrt{\frac{\mu}{\lambda}}}
=\frac{\lambda+\mu+\frac{1}{\mu}
+\frac{1}{\lambda}
-2\sqrt{\kappa}
-\frac{2}{\sqrt{\kappa}}}
{2-\frac{\sqrt{\kappa}}{\mu}-\frac{\sqrt{\kappa}}{\lambda}}\\
	&=\frac{\kappa^{2}+\mu^{2}\kappa+\kappa+\mu^{2}-2\kappa\sqrt{\kappa}\mu-2\sqrt{\kappa}\mu}
{2\kappa\mu-\kappa\sqrt{\kappa}-\mu^{2}\sqrt{\kappa}}\\
&=-\frac{(\kappa+1)(\mu^{2}+\kappa-2\sqrt{\kappa}\mu)}
{\sqrt{\kappa}(\mu^{2}+\kappa-2\sqrt{\kappa}\mu)} =\frac{-\kappa-1}{\sqrt{\kappa}}
=\frac{K-2}{\sqrt{1-K}}\ ,
\end{split}
\end{equation*}
where we used the fact that $\lambda\mu=\kappa$ and in the last equality we used the Gauss formula $\kappa=1-K$.
\end{proof}

Therefore, we can define two continuous functions
\[
	\delta:(-\infty,0)\rightarrow \R^{+} \ \ \ \ \ \ \text{and} \ \ \ \ \ \ \xi:(-\infty, 0) \rightarrow \R
\]
with the property that, given a future-convex space-like surface $\Sigma_{K}$ with constant Gauss curvature $K<0$ and orthogonal to the singular locus, the equidistant surface $S_{H}$ in the future of $\Sigma_{K}$ at  time-like distance $\delta(K)$ has constant mean curvature $H=\xi(K)$. Note that the function $\xi$ takes over $(-\infty, -1)$ as $K$ takes over $(-\infty,0)$, combined with  Theorem \ref{KfoliationdS}, we have the following:

\begin{cor}Given a CGHM de Sitter manifold with particles, for every $H<-1$ there exists a space-like surface $S_{H}$ orthogonal to the singular locus, with constant mean curvature $H$.
\end{cor}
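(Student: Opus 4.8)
The plan is to combine Theorem~\ref{KfoliationdS} with Lemma~\ref{keylemmadS}: the former guarantees a future-convex $K$-surface $\Sigma_{K}$ for every $K \in (-\infty,0)$, and the latter produces from it, by flowing a time $\delta(K)$ into the future, a surface of constant mean curvature $\xi(K)$. The content of the corollary is therefore reduced to an elementary fact about the real-valued function $\xi$, namely that as $K$ ranges over $(-\infty,0)$ the value $\xi(K)$ sweeps out exactly the interval $(-\infty,-1)$. Once this is established, for any prescribed $H<-1$ one simply sets $K=\xi^{-1}(H)$ and takes $S_{H}:=\psi^{\delta(K)}(\Sigma_{K})$.

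To analyse the image of $\xi$, I would introduce the substitution $s=\sqrt{1-K}$, which maps $K\in(-\infty,0)$ bijectively onto $s\in(1,+\infty)$. Since $K-2=-(1+s^{2})$, the expression for $\xi$ rewrites as
\[
	\xi(K)=\frac{K-2}{2\sqrt{1-K}}=-\frac{1}{2}\left(s+\frac{1}{s}\right).
\]
The function $s\mapsto s+s^{-1}$ is strictly increasing on $(1,+\infty)$ (its derivative $1-s^{-2}$ is positive there), tends to $2$ as $s\to1^{+}$ and to $+\infty$ as $s\to+\infty$. Hence $\xi$ is continuous and strictly monotone in $K$, with $\xi(K)\to -1$ as $K\to 0^{-}$ and $\xi(K)\to -\infty$ as $K\to -\infty$; by the intermediate value theorem its image is precisely $(-\infty,-1)$, and $\xi$ is a homeomorphism onto this interval.

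With this in hand the construction is immediate: given $H<-1$, let $K=\xi^{-1}(H)\in(-\infty,0)$, let $\Sigma_{K}$ be the future-convex $K$-surface from Theorem~\ref{KfoliationdS}, and set $S_{H}=\psi^{\delta(K)}(\Sigma_{K})$. By Lemma~\ref{keylemmadS} this surface has constant mean curvature $\xi(K)=H$. The one point requiring care, which I expect to be the only genuine subtlety, is the orthogonality of $S_{H}$ to the singular locus: this is not automatic but follows because $\Sigma_{K}$ is orthogonal to (indeed umbilical at) the singular lines, so the flow formulas of Lemma~\ref{principalcurvaturesdS} remain valid at the singular points (Remark~\ref{rk: dS singular points}) and the normal geodesics issuing from a point on a singular line stay on that line, carrying the orthogonality along the flow. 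This is exactly the mechanism already used in the Minkowski and anti-de Sitter cases, so no new analytic input is needed.
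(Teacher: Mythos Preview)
Your proof is correct and follows exactly the paper's approach: the corollary is deduced immediately from Theorem~\ref{KfoliationdS} and Lemma~\ref{keylemmadS} together with the observation that $\xi$ maps $(-\infty,0)$ onto $(-\infty,-1)$. You supply more detail than the paper does (the substitution $s=\sqrt{1-K}$ and the explicit monotonicity argument), but this only makes explicit what the paper asserts in one line.
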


 Observe that the surface $S_{H}$ obtained in this way is umbilical (i.e. the principal curvatures extend continuously and coincide) at the singular points. Indeed, the $K$-surface $\Sigma_{K}$, provided by Theorem \ref{KfoliationdS}, is umbilical at the singular points, which follows from the dual result in \cite[Theorem 1.5]{QiyuAdS}. See also \cite[Section 6.2]{QiyuAdS}. 
Moreover, $S_{H}$ is obtained by pushing $\Sigma_{K}$ along the normal flow, hence the claim follows from Lemma \ref{principalcurvaturesdS} and Remark \ref{rk: dS singular points}. This property actually holds for every space-like surface $S$ of constant mean curvature, orthogonal to the singular lines (see the following lemma, which follows from the same argument as Lemma \ref{lm:AdSmeancurvature} for the AdS case). This will be crucial in order to apply the Maximum Principle in this singular context.

\begin{lemma}Let $S$ be a space-like surface, orthogonal to the singular locus, with constant mean curvature $H$, embedded in a CGHM de Sitter manifold with particles. Then the principal curvatures of $S$ at the singular points coincide and are equal to $H$.
\end{lemma}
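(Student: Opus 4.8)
The plan is to mirror, almost verbatim, the proof of Lemma \ref{lm:AdSmeancurvature} from the anti-de Sitter case, since every ingredient it relies on was set up in a model-independent way and therefore applies equally to the de Sitter setting. First I would denote by $B$ the shape operator of $S$ and write the canonical decomposition $B=B_{0}+HE$, where $E$ is the identity operator and $B_{0}$ is the traceless part of $B$. Because the mean curvature $H=\frac{1}{2}\trace(B)$ is constant, the term $HE$ is trivially Codazzi, and since $B$ itself is Codazzi by the Gauss–Codazzi equations for space-like surfaces in $dS_{\theta}$ recorded in Section \ref{models}, it follows that $B_{0}=B-HE$ is a traceless Codazzi operator for the induced metric $I$.

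The heart of the argument is then to import the two regularity statements proved earlier. Proposition \ref{integrability}, whose statement is phrased for an arbitrary CGHM manifold with particles and hence covers the de Sitter case without modification, guarantees that $B_{0}$ is integrable, that is $\int_{S}\trace(B_{0}^{2})\,dA_{I}<+\infty$. Having a traceless, Codazzi and integrable operator in hand, I would invoke the correspondence recalled immediately after the proof of Proposition \ref{prop:estimatesB}: such an operator is realised as $b_{q}$, the real part (with respect to $I$) of a meromorphic quadratic differential $q$ on $\Sigma_{\mathpzc{p}}$ with at most simple poles at the marked points. The continuous-extension clause of Proposition \ref{prop:estimatesB} then yields that $B_{0}=b_{q}$ extends continuously by $0$ at each puncture.

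To conclude, if $\lambda\geq 0$ denotes the positive eigenvalue of the traceless self-adjoint operator $B_{0}$, then its eigenvalues are $\pm\lambda$ and the principal curvatures of $S$ are $H\pm\lambda$. Since $\lambda\to 0$ as one approaches a singular point, both principal curvatures converge to $H$ there, which is exactly the assertion. I do not expect any genuine obstacle here: all of the analytic content — integrability of the traceless part of a constant-mean-curvature shape operator and the vanishing-to-zero of the associated operator at the cone points — is already packaged into Propositions \ref{integrability} and \ref{prop:estimatesB} in a form that is independent of the ambient model $X_{\theta}$. The only point worth checking is that the Codazzi equation $d^{\nabla^{I}}B=0$ takes the same shape in $dS_{\theta}$ as in the flat and anti-de Sitter models, which it does, so the de Sitter case requires no new input beyond this verification.
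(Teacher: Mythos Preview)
Your proposal is correct and mirrors the paper's own approach: the paper does not give a separate proof in the de Sitter case but simply remarks that the lemma ``follows from the same argument as Lemma \ref{lm:AdSmeancurvature} for the AdS case,'' which is precisely the argument you have spelled out. Your added care in noting that Propositions \ref{integrability} and \ref{prop:estimatesB} are formulated independently of the ambient model is exactly the verification the paper leaves implicit.
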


Let $S_H$ denote the surface in a CGHM de Sitter manifold with particles $M$ obtained from the future-convex space-like surface $\Sigma_K$ with constant curvature $K<0$ with $\xi(K)=H$ as constructed in Lemma \ref{keylemmadS}. We have the following proposition.

\begin{prop}\label{existencefoliationdS}The surfaces $\{S_{H}\}_{H\in(-\infty,-1)}$ foliate the whole manifold $M$.
\end{prop}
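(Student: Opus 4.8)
The plan is to run the same two-step scheme as in the Minkowski and anti-de Sitter cases (Proposition \ref{prop:HfoliationMink} and Proposition \ref{existencefoliationAdS}), with the normal flow governed by Lemma \ref{principalcurvaturesdS} and the ambient $K$-surface foliation supplied by Theorem \ref{KfoliationdS}. First I would show that $\Omega=\bigcup_{H\in(-\infty,-1)}S_H$ is an open foliated domain of $M$, and then that $\Omega=M$.

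For the first step, recall from Lemma \ref{keylemmadS} that $S_H=\psi^{\delta(K)}(\Sigma_K)$ with $K=\xi^{-1}(H)$, where $\xi\colon(-\infty,0)\to(-\infty,-1)$, $\xi(K)=\frac{K-2}{2\sqrt{1-K}}$, is a continuous strictly increasing bijection (immediate from the explicit formula), so that $\xi^{-1}$ is continuous. Given $H_n\to H$, put $K_n=\xi^{-1}(H_n)\to K=\xi^{-1}(H)$. By Theorem \ref{KfoliationdS} the embeddings $\sigma_n\colon\Sigma_{\mathpzc{p}}\to M$ with image $\Sigma_{K_n}$ converge to the embedding $\sigma_\infty$ of $\Sigma_K$ uniformly, and smoothly away from the singular locus. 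Since $\delta$ is continuous and the normal flow $\psi^t$ depends continuously on $t$ and on the $1$-jet of the surface, the embeddings $\psi^{\delta(K_n)}\circ\sigma_n$ of $S_{H_n}$ converge to $\psi^{\delta(K)}\circ\sigma_\infty$, the embedding of $S_H$; orthogonality to the singular lines is preserved along the flow. This is verbatim the first part of the proof of Proposition \ref{prop:HfoliationMink}, and it shows that $\Omega$ is foliated by the $S_H$.

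For $\Omega=M$ I would prove that every point of $M$ lies both to the past of some leaf and to the future of some leaf. Coverage toward the boundary at infinity is immediate: as $\{\Sigma_K\}_{K\in(-\infty,0)}$ foliates $M$, any $p\in M$ lies on a unique $\Sigma_K$, and since $S_{\xi(K)}=\psi^{\delta(K)}(\Sigma_K)$ with $\delta(K)>0$ is entirely in the future of $\Sigma_K$, we get $p\in I^-(S_{\xi(K)})$; hence $\bigcup_H\overline{I^-(S_H)}=M$. Coverage toward the initial singularity is the delicate point, and I would treat it by contradiction as in Proposition \ref{existencefoliationAdS}, using the limits $\lim_{K\to-\infty}\delta(K)=\arctanh(0)=0$ and $\lim_{K\to 0^-}\delta(K)=\arctanh(1)=+\infty$ together with the fact that $\Sigma_K$ tends to the initial singularity as $K\to-\infty$ (Theorem \ref{KfoliationdS}). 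If $D^+=\bigcup_H\overline{I^+(S_H)}\subsetneq M$, then $S^+=\partial D^+$ is a convex space-like surface; since $\delta(K)\to 0$ as $K\to-\infty$, for every $\epsilon>0$ there is $K_\epsilon$ such that $S_{\xi(K)}$ lies at time-like distance less than $\epsilon$ from $\Sigma_K$ whenever $K<K_\epsilon$, and choosing $\epsilon$ small while using that the $\Sigma_K$ exhaust $M$ down to the initial singularity yields a point lying on some $\Sigma_K$ with $K<K_\epsilon$ strictly in the past of $S^+$, contradicting that this point belongs to $\overline{I^+(S_{\xi(K)})}\subset D^+$. Hence $\bigcup_H\overline{I^+(S_H)}=M$ as well.

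The hard part is this last exhaustion step near the initial singularity: one must ensure that the $S_H$ genuinely accumulate onto the initial singularity and leave no collar uncovered, which is precisely where the uniform control $\delta(K)\to 0$ against the position of $\Sigma_K$ is required. Finally, the leaves are monotonically ordered and pairwise disjoint: as $H$ increases, $K=\xi^{-1}(H)$ increases, so $\Sigma_K$ moves to the future while the flow time $\delta(K)$ increases, whence $S_H$ moves strictly to the future and $S_H\cap S_{H'}=\emptyset$ for $H\neq H'$ (compare Theorem \ref{KfoliationdS}). Combining this ordering with the two coverage statements, every $p\in M$ is sandwiched between two leaves and therefore lies on one of them, so $\Omega=M$ and the proposition follows.
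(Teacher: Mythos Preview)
Your overall scheme is the paper's scheme, and the first step (convergence of $S_{H_n}$ to $S_H$ via convergence of $\Sigma_{K_n}$ to $\Sigma_K$) is fine and matches the paper. For coverage toward the future boundary you give a cleaner argument than the paper (the paper instead invokes that the distance from $\Sigma_{K}$ to the initial singularity tends to infinity as $K\to 0^-$), and yours works: any $p\in\Sigma_K$ lies in $I^-(S_{\xi(K)})$ since $\delta(K)>0$.

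However, your contradiction for the initial-singularity side is misstated and, as written, does not yield anything. You choose a point $p$ on some $\Sigma_K$ with $K<K_\epsilon$, strictly in the past of $S^+$, and then claim this contradicts $p\in\overline{I^+(S_{\xi(K)})}\subset D^+$. But $p\in\Sigma_K$ lies in the \emph{past} of $S_{\xi(K)}=\psi^{\delta(K)}(\Sigma_K)$, so $p\notin\overline{I^+(S_{\xi(K)})}$; and $p$ was chosen in $M\setminus D^+$ anyway. There is no contradiction at $p$. The Minkowski argument you are invoking does not conclude at $p$ but at the pushed point $q=\psi^{\delta(K)}(p)\in S_{\xi(K)}$: one first takes $p\in M\setminus D^+$ at time-like distance greater than $2\epsilon$ from $S^+$, and then observes that $q$ is at distance $\delta(K)<\epsilon$ from $p$, hence still at distance greater than $\epsilon$ in the past of $S^+$, so $q\notin D^+$; this contradicts $q\in S_{\xi(K)}\subset D^+$. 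You should rewrite the step accordingly.

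As a secondary remark, the paper avoids asserting that $\partial D^+$ is a convex space-like surface in the de Sitter setting. Instead it argues directly with distances to the initial singularity: if some $x$ lies in no $I^+(S_{H_n})$, let $D$ be its distance to the initial singularity, pick $K_N$ with $\delta(K_N)<D/2$ and $\Sigma_{K_N}$ within $D/2$ of the initial singularity, so $S_{H_N}$ is within $D$ of the initial singularity and hence $x\in I^+(S_{H_N})$, contradiction. This sidesteps the need to cite a barrier/regularity result for $\partial D^+$ in de Sitter. If you keep your version, you should either justify why $\partial D^+$ is a convex space-like Cauchy surface in this setting or bypass it as the paper does.
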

\begin{proof}
Let $\Omega$ be the union of the constant mean curvature surfaces $S_{H}$ over all $H\in(-\infty,-1)$.
The same argument as that of Proposition \ref{prop:HfoliationMink} shows that if $H_{n}$ is a sequence converging to $H \in (-\infty, +\infty)$, then the surface $S_{H_{n}}$ converges to $S_{H}$ uniformly everywhere and smoothly outside the singular locus.\\
\indent We now prove that the region $\Omega$ must coincide with the whole manifold $M$. Recall that if $K_n\rightarrow 0$, the time-like distance from the surface $\Sigma_{K_n}$ to the initial singularity of $M$ tends to infinity (this is a dual description for the de Sitter case of Statement (2) in \cite[Lemma 5.16]{QiyudS}). Combined with the construction of $S_{H_n}$ from $\Sigma_{K_n}$, we have $\cup_n I^{-}(S_{H_n})=M$. In particular, $H_n\rightarrow -1$ as $K_n\rightarrow 0$.

It remains to show that if $H_n\rightarrow -\infty$, then $\cup_n I^{+}(S_{H_n})=M$. Otherwise, there exists a point, say $x$, in $M$ which is not contained in the future of surfaces $S_{H_n}$ for any $n$. Let $D$ be the maximal distance from $x$ to the initial singularity of $M$. Theorem \ref{KfoliationdS} implies that there exists a surface $\Sigma_{K_N}$ of constant Gauss curvature $K_N$ for $N$ large enough such that $\delta(K_N)<D/2$ (this is ensured by the fact that $\delta(K_n)=\arctanh \left(\sqrt{\frac{1}{1-K_n}}\right)$ tends to $0$ as $K_n$ tends to $-\infty$), lying within a time-like distance less than $D/2$ from the initial singularity of $M$. Therefore, the equidistant surface $S_{H_N}$ in the future at a distance $\delta(K_N)$ from $\Sigma_{K_N}$, lies within a time-like distance strictly less than $D$ from the initial singularity. This implies that $x$ is in the future of $S_{H_N}$, which leads to contradiction.

Note that $\Sigma_{K}$ is disjoint from any other $\Sigma_{K'}$ for all $K'\not=K\in(-\infty,0)$ (see Theorem \ref{KfoliationdS}). Combined with the construction of $S_H$, it follows that $S_H$ is disjoint from any other $S_{H'}$ for all $H\not=H'\in(-\infty,-1)$.

Combining all the results above, the proposition follows.

\end{proof}

Note that the argument using the Maximum Principle in Proposition \ref{prop:uniqueCMC} for the AdS case is also adapted to the de Sitter case. We have the following result.

\begin{prop}\label{prop:uniqueness of CMC for dS}The foliation of a CGHM de Sitter manifold with particles by constant mean curvature surfaces is unique.
\end{prop}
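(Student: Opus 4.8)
The plan is to follow the Maximum Principle argument of Proposition \ref{prop:uniqueCMC} almost verbatim, keeping track of the fact that in the de Sitter setting the mean curvature of the leaves ranges over $(-\infty,-1)$ rather than over all of $\R$. It suffices to show that any space-like surface $S$ orthogonal to the singular locus with constant mean curvature $H$ must coincide with one of the surfaces provided by Proposition \ref{existencefoliationdS}; uniqueness of the foliation then follows at once. First I would use the fact, established in Proposition \ref{existencefoliationdS}, that the family $\{S_{H}\}_{H\in(-\infty,-1)}$ foliates $M$ to define a continuous function $F:M\to(-\infty,-1)$ whose level set $F^{-1}(H)$ is exactly the surface $S_{H}$. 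Since $S$ is compact, $F$ restricted to $S$ attains a minimum $H_{\min}$ and a maximum $H_{\max}$, both in $(-\infty,-1)$, at points $p_{\min}$ and $p_{\max}$ respectively.

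Next I would exploit the fact that, as $H$ increases from $-\infty$ to $-1$, the leaves $S_{H}$ move towards the future (with $H\to-\infty$ near the initial singularity and $H\to-1$ near infinity, by Theorem \ref{KfoliationdS} and the construction in Lemma \ref{keylemmadS}). Hence $F\ge H_{\min}$ on $S$ forces $S_{H_{\min}}$ to lie in the past of $S$ and to be tangent to it at $p_{\min}$, while $F\le H_{\max}$ forces $S_{H_{\max}}$ to lie in the future of $S$ and to be tangent to it at $p_{\max}$. At this point I would distinguish three cases exactly as in the AdS argument. If $p_{\min}$ and $p_{\max}$ are both regular, the classical Maximum Principle (\cite{BBZAdSCMC}) gives $H\le H_{\min}$ at $p_{\min}$ and $H\ge H_{\max}$ at $p_{\max}$. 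If they are both singular, I would use that $S$, $S_{H_{\min}}$ and $S_{H_{\max}}$ are all umbilical at the singular points (by the lemma preceding Proposition \ref{existencefoliationdS}, the de Sitter analogue of Lemma \ref{lm:AdSmeancurvature}) and apply the Maximum Principle for umbilical surfaces (\cite[Lemma 3.12]{QiyuAdS}) to reach the same two inequalities; the mixed case combines the two. In every case one obtains
\[
	H_{\max}\le H\le H_{\min}\le H_{\max} \ ,
\]
so that $H_{\min}=H_{\max}=H$ and $F$ is constant equal to $H$ on $S$. Consequently $H\in(-\infty,-1)$ and $S\subseteq F^{-1}(H)=S_{H}$; since both are Cauchy surfaces, this gives $S=S_{H}$.

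The main obstacle, as in the AdS case, is to verify that the Maximum Principle applies at the singular points, which is why the umbilicity of constant mean curvature surfaces at the cone singularities is indispensable: it is precisely this property that allows the singular Maximum Principle of \cite[Lemma 3.12]{QiyuAdS} to be invoked. One should also confirm that the direction of the mean-curvature comparison produced by the Maximum Principle in the de Sitter signature is the same as in the AdS computation; this holds because the comparison at a tangency point of two space-like surfaces depends only on their relative position and on the common choice of future-directed unit normal (future-convex leaves having negative principal curvatures, following the convention of this paper), and not on the ambient sectional curvature. The squeezing inequality then closes the argument and, in particular, shows a posteriori that the constant mean curvature of any such surface necessarily lies in $(-\infty,-1)$.
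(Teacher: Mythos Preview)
Your proposal is correct and follows exactly the approach the paper intends: the paper's own proof consists of a single sentence stating that the Maximum Principle argument of Proposition \ref{prop:uniqueCMC} for the AdS case carries over verbatim, and you have spelled out precisely that argument with the appropriate adjustments (the range $(-\infty,-1)$ for $H$, the future-directed ordering of the leaves, and the umbilicity lemma for dS). Your additional remarks checking the direction of the mean-curvature comparison and the a posteriori constraint $H\in(-\infty,-1)$ are welcome clarifications but do not depart from the paper's route.
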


Combined with Proposition \ref{existencefoliationdS} and Proposition \ref{prop:uniqueness of CMC for dS}, we complete the proof of Theorem \ref{mainteodS}.

\bibliographystyle{alpha}
\bibliography{bs-bibliography}

\end{document}